\newtheorem{theorem}{Theorem}[section]
\newtheorem{lemma}[theorem]{Lemma}
\newtheorem{prop}[theorem]{Proposition}
\newtheorem{proposition}[theorem]{Proposition}
\newtheorem{corollary}[theorem]{Corollary}
\theoremstyle{definition}
\newtheorem{defn}[theorem]{Definition}
\newtheorem{definition}[theorem]{Definition}
\newtheorem{example}[theorem]{Example}
\newtheorem{remark}[theorem]{Remark}
\newtheorem{conven}[theorem]{Convention}
\newtheorem{convention}[theorem]{Assumption}
\newtheorem{proposal}[theorem]{Proposal}
\newtheorem{expl}[theorem]{Explanation}
\numberwithin{equation}{section}
\def\ggg{\mathfrak{g}}
\def\gl{\mathfrak{gl}}
\def\cb{\mathcal{B}}
\def\calb{\mathcal{B}}
\def\ggg{\mathfrak{g}}
\def\hhh{\mathfrak{h}}
\def\bbb{\mathfrak{b}}
\def\nnn{\mathfrak{n}}
\def\bbc{\mathbb{C}}
\def\bbz{\mathbb{Z}}
\def\bk{\mathbf{k}}
\def\bo{{\bar 1}}
\def\bz{{\bar 0}}
\def\ev{{\text{ev}}}
\def\sfd{\textsf{d}}
\def\whw{\widehat W}
\def\cM{\mathcal{M}}
\def\sG{\textsf{G}}
\def\sX{\textsf{X}}
\def\str{\textsf{str}}
\def\tr{\textsf{tr}}
\def\wsc{\widehat{\mathscr{C}}}
\def\ssp{\textsf{S}_{\widetilde\Pi}}
\def\spo{\textsf{S}_{\Pi_0}}
\def\scrw{\mathscr{W}}
\def\vep{\varepsilon}
\begin{document}
\title[Super Weyl groups]{Super Weyl groups, defining sequences and Coxeter graphs}
\author{Changjie Cheng}
\address{School of Mathematical Sciences,   East China Normal University, No. 500 Dongchuan Rd., Shanghai 200241,   P.R.China}
\email{cjcheng@math.ecnu.edu.cn}

\author{Yi-Yang Li}
\address{School of Mathematics, Physics and Statistics, Shanghai University of Engineering Science,
Shanghai 201620, China}\email{yiyangli1979@outlook.com}

\author{Bin Shu}
\address{School of Mathematical Sciences, Ministry of Education Key Laboratory of Mathematics and Engineering Applications \& Shanghai Key Laboratory of PMMP,  East China Normal University, No. 500 Dongchuan Rd., Shanghai 200241, China} \email{bshu@math.ecnu.edu.cn}

\subjclass[2010]{Primary 20F55; Secondary 17B45, 17B22, 17B05}
\keywords{Coxeter graph, super Weyl group; basic classical Lie superalgebras, defining sequence}
\thanks{This work is partially supported by the National Natural Science Foundation of China (12071136 and
12271345), and by Science and Technology Commission of Shanghai Municipality (No. 22DZ2229014).}

\begin{abstract}
The super Weyl group of a basic classical Lie superalgebra
was introduced and studied in \cite{PS}, which turns out to play an important role for the study of representations of  the basic classical Lie superalgebras and  algebraic supergroups (see \cite{PS, LS}).  These groups turn out to be some quotients of Coxeter groups. It is deserved to specially investigate super Weyl groups via revealing the related Coxeter systems.

 The purpose of this paper is twofold. { One is to describe the Coxeter systems for  super Weyl groups of basic classical Lie superalgebras. The other one is to introduce defining sequences which are a kind of new descriptions of fundamental root systems for classical Lie superalgebras of type $A,B,C$ and $D$.}
 Based on defining  sequences, we decide the Coxeter groups associated with those super Weyl groups via Coxeter graphs.
\end{abstract}

\maketitle

\setcounter{tocdepth}{1}\tableofcontents	
\setcounter{section}{-1}

\section{Introduction}\label{intro}

\subsection{}
As is well-known, for a semisimple Lie algebra $\textsf{g}$ over the complex number field, all Cartan subalgebras of $\textsf{g}$ are conjugate and all of their Borel subalgebras are conjugate too. Consequently, the machinery of Weyl groups becomes powerful to study Lie structure and representations for complex reductive groups and their Lie algebras and their modular counterparts (see \cite{Hum}, \cite{Jan},  \cite{Var}, {\sl etc}).

However,  the story is quite different in the super case.  As the super counterparts of complex semisimple Lie algebras,  basic classical Lie superalgebras are the most important classes of Lie superalgebras in the classification of complex finite-dimensional simple Lie superalgebras (see \cite{K}). By definition, those Lie superalgebras $\ggg$ admit the purely-even parts $\ggg_\bz$ which are reductive Lie algebras. Their purely-even parts $\ggg_\bz$ act on their purely-odd parts $\ggg_\bo$ such that $\ggg_\bo$ become completely reducible modules of $\ggg_\bz$. Those Lie superalgebras admit  non-degenerate invariant bilinear forms.  Different from the situation of ordinary Lie algebras,  Borel subalgebras of a basic classical Lie superalgebra are {not necessarily} conjugate.  The Weyl group of  a basic classical Lie superalgebra $\ggg=\ggg_\bz+\ggg_\bo$, which is by definition the Weyl group of the even part $\ggg_\bz$, is not sufficiently powerful to  encode intrinsic information of $\ggg$ and its representations.

In order to remedy this deficiency, {several different solutions have been proposed}.  What is the most important one  is to consider the so-called odd reflections  (see for \cite{Ser}, or \cite[\S1.3]{CW}).
Nevertheless, there was still lack of a {``unifying"} group playing  the role of the ordinary Weyl group in the situation of ordinary complex semisimple Lie algebras.

 In order to discover the relation  between roots and the change of the Borel subalgebras containing a fixed Cartan subalgebra, in \cite{PS} the authors  introduced   the notion of super Weyl group  $\widehat W$ for establishing certain Jantzen filtrations and Jantzen sum formulas in their study of  modular representations of basic classical Lie superalgebras.  The introduction of super Weyl group  $\widehat W$  is due to the following observation on a basic classical  Lie superalgebra $\ggg$.

\begin{itemize}

\item[(i)] Cartan subalgebras of $\ggg$ are actually Cartan subalgebras of $\ggg_\bz$;
\item[(ii)] All Cartan subalgebras of $\ggg$  are conjugate;

\item[(iii)] { Fix a Cartan subalgebra $\hhh$ of $\ggg$,  and denote by $\calb$ the set of Borel subalgebras of $\ggg$ containing $\hhh$.} Then $\calb$ is a finite set;
 \item[(iv)] All ordinary reflections (i.e. elements of the Weyl group  $\mathscr{W}(\ggg_\bz)$) and odd reflections can be {regarded as reflection transformations of $\calb$, i.e. invertible transformations} of order 2 (see \cite[\S3.2]{PS}).
\end{itemize}
Consequently,  a super Weyl group $\widehat W$ of $\ggg$ can be naturally introduced, which is by definition a subgroup of the  transformation group of  $\calb$ generated by ordinary reflections and odd reflections. Due to (iii), it is of course a finite group.
Naturally, all Borel subalgebras containing $\hhh$ are conjugate under $\widehat W$-action.
The group $\widehat W$ is independent of the choice of Cartan subalgebras,  up to isomorphisms, because of (i) and (ii).
{Such a point of view}  is actually valid to define a Weyl group for $\ggg_\bz$ which coincides with the ordinary abstract Weyl group $\scrw:=\mathscr{W}(\ggg_\bz)$ of $\ggg_\bz$.  This yields that $\scrw$ is a subgroup of $\widehat W$. So such a definition of super Weyl groups is compatible with that for ordinary complex semisimple Lie algebras.

{We also note that Sergeev and Veselov introduced in \cite{SV}  the notion of super Weyl
groupoid, associated to any generalized root system in Serganovas sense. The notion of
super Weyl group considered in the present paper has no direct relation with their notion.}

\subsection{} Apart from providing an approach to developing some modular theory involving Jantzen filtrations (see \cite{PS, LS}), the introduction of super Weyl groups enables us to obtain some new observations which {were well known in the ordinary case, but possibly not in the super case.}  For example, Theorem 3.10 in \cite{PS} shows that  there exists a distinguished element $\widehat w_0=\widehat r_\ell \cdots \widehat r_1\in \widehat W$ with $\widehat r_i=\widehat r_{\theta_i}$ for a series of simple positive roots $\theta_i, i=1,\cdots,\ell$, such that all positive roots exactly  correspond to the standard reduced expressions of $\widehat w_0$, modulo counting the non-isotropic odd roots which are half of positive even roots. This result is parallel to the property of the longest element of the ordinary Weyl group.

It is hopefully expected that the super Weyl groups become a  practicable tool in further study of representations of basic classical Lie superalgebras. It is necessary to understand more on such groups.

Note that as transformations of $\mathcal{B}$, both ordinary reflections or odd reflections have order $2$. This is to say, all of them are ``real" reflections. By definition, a super Weyl group $\whw$ is a finite group generated by involutions. Naturally, $\whw$ must be a quotient of some proper Coxeter group ${\mathcal{C}}$ associated with the Coxeter system $(\mathcal{C}, S)$
such that $S$ exactly corresponds to the canonical generator set
$\widehat r_{\theta}$  of $\whw$ with $\theta \in\widetilde\Pi$, and the entries $m_{xy}$ of the  matrix of the Coxeter system $({\mathcal{C}}, S)$ are identical to the corresponding ones in
$\whw$ (see \S\ref{sec: cox sys}). Such a Coxeter group $\mathcal{C}$ is unique: it is called the Coxeter group associated with $\whw$.

So it's a natural way to study super Weyl groups via investigation of the corresponding  Coxeter groups.

\subsection{Purposes and schemes}\label{sec: preword special cases}
The purpose of this paper is twofold. One is to introduce defining sequences of  fundamental root systems of classical Lie superalgebras of $A,B,C,D$. This is a new kind of parameters of fundamental root systems.  The other one is to present Coxeter graphs of the Coxeter groups for super Weyl groups of basic classical Lie superalgebras. The fulfilment of the second one  depends on the  first one.

Since the ordinary Weyl group $\scrw$ of $\ggg_\bz$  is a subgroup of $\whw$, the Coxeter system of $\scrw$ is a  Coxeter subsystem of $\whw$. As the Coxeter graph of $\scrw$ is well known,  we only need to complete the graph of $\whw$ by adding the weighted edges arising from the odd reflections appearing in the Coxeter system of $\whw$.  For this, we have to  compute all the orders $m_{xy}$ for $x,y$ in the Coxeter system  with $x$ being an odd reflection, and $y$ any other reflection (see \S\ref{sec: cox sys}).

The task of  computation mentioned above is obviously nontrivial. The scheme of the computation is divided into two steps. The first step  is to  parameterize all fundamental systems corresponding to the Borel subalgebras containing a fixed Cartan subalgebra, which will be sequences of some integers associated to roots.
The second one is to investigate the powers of $xy$ by computing the action of $xy$ on all sequences corresponding to the fundamental systems.  For example,  we can
easily fulfil the first step in type $A$, by naturally parametrizing all fundamental systems. However, it is quite challenging to fulfil the first step in  types
$B$, $C$ and $D$. We have to elaborately  make some design for parametrizing the fundamental systems {with some special parameters which are sequences of $I(m|n)$ (see \S\ref{sec: fun sys} for the notation), called the defining sequences}.  This design is the so-called Defining-Sequence Theorem ({see Theorem \ref{thm: general for Def-Seq}})
which shows that the defining sequences and the fundamental systems are in  a one-to-one correspondence and such a correspondence is $\widehat W$-equivariant ({see Theorem \ref{thm: general for Def-Seq}(3) and} Remark \ref{rem: super W-equivariant}).  Defining-Sequence Theorem  provides  a new classification of fundamental systems which seems to be more effective and
essential {than other ones before} (cf. the classification by $\varepsilon\delta$-sequences in \cite[\S1.3]{CW} for symplectic-orthogonal Lie superalgebras,
{{which is up to conjugation under the Weyl group action}}).

 \subsection{Main results}\label{sec: main results forward}  We {{successfully achieve our goals}} by completing the scheme mentioned above for types $A,B,C$ and $D$, and obtaining all Coxeter graphs.

 \subsubsection{} We first establish the following Defining-Sequence Theorem

 {
\begin{theorem}\label{thm: general for Def-Seq}
 Let $\ggg$ is a classical Lie superalgebra of type $A(m|n)$, $B(m|n)$, $C(m)$ or $D(m|n)$.  Then the following statements hold.
 \begin{itemize}
 \item[(1)] There is a bijection $\textsf{d}$ from the set $\digamma$ of fundamental systems to the set  $\daleth$ of defining sequences.

 \item[(2)] Both $\digamma$ and $\daleth$ are endowed with transitive $\whw$-action, respectively.

 \item[(3)] The correspondence $\sfd$ is $\whw$-equivariant, which means that $\sfd$ is compatible with $\whw$-action.

 \end{itemize}

 \end{theorem}
The above result for the case $A(m|n)$ is natural and trivial (see \S\ref{s4.1}-\S\ref{sec: 2.6}). For the other cases, it is quite complicated and has to be established separately (see Theorems \ref{prop: presenting seq type D}, \ref{prop: presenting seq type C}, and \ref{prop: presenting seq type B}).

 Thus the set of fundamental systems is  in a one-to-one correspondence with the set of defining sequences, compatible with $\whw$-transformation.
\subsubsection{} With the above setup,  we then obtain all Coxeter graphs.  For type $A$,  the Coxeter graph are presented in Theorem \ref{Cox glmn}.}
 For type $B, C, D$, the Coxeter groups are presented in Theorems  \ref{Thm5.3},  \ref{thm: type C}, and \ref{thm: 6.5} respectively.

\subsubsection{} Unfortunately, so far we are not able to parametrize all fundamental root systems for exceptional basic classical Lie superalgebras  $D(2,1;\alpha)$, $G(3)$, and $F(4)$. Nevertheless, we can make conjectures on their Coxeter groups according to some known fundamental systems in the appendix section (see  Proposals    \ref{thm: excep d},  \ref{thm: excep f}  and \ref{thm: excep g}).\footnote{Very recently, Y.-H. Shen and Z.-Y. Tan proved these conjectures (see: Coxeter Graphs for Super Weyl Groups of Exceptional Classical Lie Superalgebras, arXiv:2603.28312[math.RT]).}

\subsection{} It is worth mentioning that the quotient relations of a super Weyl group $\whw$ as a quotient of the corresponding Coxeter group
 are generally complicated. So far we have no other examples with the quotient relations available than the following special cases where we can give the precise structure of the super Weyl groups.
 \begin{itemize}
 \item[(0.5.1)] Case $\ggg=\gl(1|2)$): the super Weyl group is isomorphic to $I_2(6)$ (see Example \ref{gl12}).
{     \item[(0.5.2)] Case $\ggg=\mathfrak{spo}(2m|1)$: the super Weyl group coincides with the ordinary Weyl group. This is because there is not any isotropic roots in this case, and all Borel subalgebras containing a fixed Cartan subalgebra are conjugate under the ordinary Weyl group.}
  \item[(0.5.3)] Case $\ggg=\mathfrak{spo}(2|2)$:  the super Weyl group is isomorphic to the dihedral group of order $12$, the same as $I_2(6)$ (see Example \ref{ex: spo22}).
  \end{itemize}

{
\subsection*{\textsc{Acknowdgement}} The authors would like to thank very much the Referee for the careful reading of the manuscript and providing lots of helpful comments.
}

\section{Preliminaries}

We recall some basic materials involving super Weyl groups for basic classical Lie superalgebras and Coxeter groups. For the simplicity of arguments, our focus is on a field of characteristic $0$.  However, the arguments in this paper {are valid also in the
case of a field of any characteristic.}

From now on, we suppose that the ground field $\bk$ is  algebraically closed of characteristic $0$.

\subsection{Basic classical Lie superalgebras} For a given basic Lie superalgebra $\ggg$ with a Cartan subalgebra $\mathfrak{h}$, denote by $\Phi$ the root system of ${\ggg}$ relative to $\mathfrak{h}$ whose simple root system $\Pi=\{\alpha_1,\cdots,\alpha_l\}$ is standard, corresponding to the standard positive diagrams in the sense of \cite[\S1.3]{CW} (\textit{cf}. \cite[Proposition 1.5]{K2}).
Let $\Phi^+$ be the corresponding positive system in $\Phi$, and put $\Phi^-:=-\Phi^+$. Let ${\ggg}=\mathfrak{n}^-\oplus\mathfrak{h}\oplus\mathfrak{n}^+$ be the corresponding triangular decomposition of ${\ggg}$. There is a canonical Borel subalgebra $\bbb:=\hhh+\nnn^+$. Furthermore, $\ggg=\ggg_{\bz}+\ggg_{\bo}$, and $\ggg_{\bz}=\hhh+\sum_{\alpha\in \Phi_0}\ggg_\alpha$ and $\ggg_\bo=\sum_{\beta\in \Phi_1}\ggg_\beta$. The set of even roots $\Phi_0$ is occasionally  denoted by $\Phi_\ev$, and the set of odd roots $\Phi_1$  by $\Phi_{\text{odd}}$ for clarification.  There exists a non-degenerate even invariant super-symmetric bilinear form $(\cdot,\cdot)$ on $\ggg$ (see for example, \cite[Theorem 1.18]{CW}), which restricts to a non-degenerate form $(\cdot,\cdot)$ on $\hhh$ and on $\hhh^*$.  Especially, $(\cdot,\cdot)$ can be defined in $\Phi$.  A root $\gamma\in \Phi$ is called isotropic if $(\alpha,\alpha)=0$ (note that  an isotropic root is necessarily an odd root). Following \cite{Mu1} we set

$$\overline \Phi_0:=\{\alpha\in \Phi_0\mid {\alpha\over 2}\notin \Phi_1\},\;\; \overline\Phi_1:=\{\beta\in\Phi_1\mid 2\beta\notin\Phi_0\}. $$

By Lemma 8.3.2 of \cite{Mu1}, $\overline\Phi_1$ is just the set of isotropic roots. Then the set of nonisotropic roots is
\begin{align} \label{nonisotropic}
\Phi_{\text{nonisotropic}}=\overline\Phi_0\cup(\Phi_1\backslash \overline\Phi_1).
\end{align}

The following lemma is fundamental: indeed, it is the basis of the very definition of odd
reflections.

\begin{lemma} (\cite[Lemma 1.30]{CW}) \label{fun lemm} Let $\ggg$ be a basic classical  Lie superalgebra and maintain the above notations. For a given isotropic simple root $\gamma$  in  $\Pi$, set { $\Phi^+_\gamma:=\{-\gamma\}\cup (\Phi^+\backslash\{\gamma\})$}. Then $\Phi^+_\gamma$ is a new positive root system whose corresponding fundamental root system $\Pi_\gamma$ is given by
$$ \Pi_\gamma=\{\alpha\in\Pi\mid (\alpha,\gamma)=0,\alpha\ne\gamma\}\cup\{\alpha+\gamma\mid\alpha\in\Pi, (\alpha,\gamma)\ne 0\}\cup\{-\gamma\}.$$
\end{lemma}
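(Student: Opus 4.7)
The plan is to establish the two assertions in sequence: first that $\Phi^+_\gamma$ is a positive system, and then that its simple root system is the prescribed $\Pi_\gamma$.

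For the first claim, the disjoint-union property $\Phi = \Phi^+_\gamma \sqcup (-\Phi^+_\gamma)$ is immediate from the construction, so the substantive point is closure under root addition. Given $\beta_1, \beta_2 \in \Phi^+_\gamma$ with $\beta_1 + \beta_2 \in \Phi$, I would split into cases according to how many $\beta_i$ equal $-\gamma$. If neither does, then both lie in $\Phi^+\setminus\{\gamma\}$, so $\beta_1+\beta_2\in\Phi^+$, and simplicity of $\gamma$ in $\Pi$ forbids $\beta_1+\beta_2=\gamma$. The case $\beta_1=\beta_2=-\gamma$ is vacuous because $\gamma$ is isotropic odd and hence $-2\gamma\notin\Phi$. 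The key remaining case is $\beta_1=-\gamma$, $\beta_2\in\Phi^+\setminus\{\gamma\}$: expanding $\beta_2=\sum_{\alpha\in\Pi}n_\alpha\alpha$ with $n_\alpha\ge 0$, some $\alpha_j\neq\gamma$ must have $n_{\alpha_j}>0$ since $\beta_2\neq\gamma$; this positive coefficient survives in $\beta_2-\gamma$, placing it in $\Phi^+$, and $\beta_2-\gamma\neq\gamma$ is again forced by $2\gamma\notin\Phi$.

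For the second claim, I would first verify that each element of the proposed $\Pi_\gamma$ belongs to $\Phi^+_\gamma$. The only nontrivial point is that $\alpha+\gamma\in\Phi$ whenever $\alpha\in\Pi\setminus\{\gamma\}$ and $(\alpha,\gamma)\neq 0$. Here I would rule out $\alpha-\gamma\in\Phi$ by the standard simplicity contradiction: $\alpha-\gamma\in\Phi^+$ breaks simplicity of $\alpha$ since $\alpha=(\alpha-\gamma)+\gamma$, while $\alpha-\gamma\in\Phi^-$ breaks simplicity of $\gamma$ since $\gamma=\alpha+(\gamma-\alpha)$. I then invoke the $\gamma$-string structure for isotropic $\gamma$ in a basic Lie superalgebra: when $(\alpha,\gamma)\neq 0$, exactly one of $\alpha\pm\gamma$ is a root, so the elimination of $\alpha-\gamma$ forces $\alpha+\gamma\in\Phi$. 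The companion fact that $(\alpha,\gamma)=0$ with $\alpha\in\Pi\setminus\{\gamma\}$ forces $\alpha+\gamma\notin\Phi$ (by symmetry of isotropic strings combined with $\alpha-\gamma\notin\Phi$) will be needed to exclude a spurious decomposition $\alpha=(\alpha+\gamma)+(-\gamma)$ in the indecomposability step.

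The remaining tasks are to show that each element of $\Pi_\gamma$ is indecomposable in $\Phi^+_\gamma$ and that every element of $\Phi^+_\gamma$ is a non-negative integer combination of $\Pi_\gamma$. Indecomposability reduces to a bookkeeping exercise in $\Pi$-coordinates: any hypothetical decomposition $\theta=\beta_1+\beta_2$ in $\Phi^+_\gamma$ is tested by expanding each $\beta_i$ in $\Pi$, exploiting that $-\gamma$ is the unique element of $\Phi^+_\gamma$ with negative $\gamma$-coefficient. For the spanning property, one takes $\beta=\sum n_\alpha\alpha\in\Phi^+\setminus\{\gamma\}$ and substitutes $\alpha=(\alpha+\gamma)+(-\gamma)$ for each non-orthogonal simple $\alpha\neq\gamma$; the delicate point is that the resulting coefficient of $-\gamma$ is non-negative, which is where I expect the main obstacle to lie, as it rests on comparing $n_\gamma$ to the sum of $n_\alpha$ over simple roots $\alpha$ with $(\alpha,\gamma)\neq 0$, a comparison that requires the detailed structure of positive roots and the position of $\gamma$ in $\Pi$ for the given basic classical Lie superalgebra.
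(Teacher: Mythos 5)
The paper offers no proof of this lemma: it is imported verbatim as \cite[Lemma 1.30]{CW}, so there is no internal argument to compare yours against. Judged on its own terms, your outline is sound in its first two stages. The closure argument for $\Phi^+_\gamma$ is correct (including the observation that $-2\gamma\notin\Phi$ because $\gamma$ is isotropic, and that $\beta_2-\gamma$ inherits a positive coefficient on some simple root other than $\gamma$), and your use of the isotropic string facts --- exactly one of $\alpha\pm\gamma$ is a root when $(\alpha,\gamma)\ne0$, and both or neither when $(\alpha,\gamma)=0$ --- is the right structural input; these are indeed the lemmas preceding 1.30 in \cite{CW}. One small caveat: if ``positive system'' is defined via a generic linear functional rather than by closure under addition, you still owe a sentence producing such a functional (a small perturbation of the one defining $\Phi^+$ works, since $\gamma$ is simple).

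The genuine gap is in your last paragraph, and you name it yourself: the claim that every $\beta\in\Phi^+\setminus\{\gamma\}$ expands with a non-negative coefficient of $-\gamma$ amounts to the inequality $\sum_{(\alpha,\gamma)\ne0}n_\alpha\ge n_\gamma$, which you leave unproved, and the indecomposability of the elements of $\Pi_\gamma$ is dismissed as bookkeeping without being carried out. As stated, the spanning inequality does not follow from a naive height induction (the inductive step needs a strict inequality when $\beta-\gamma$ is a root) and would force you into a type-by-type inspection of positive roots. You can avoid this obstacle entirely: once $\Phi^+_\gamma$ is known to lie in an open half-space, the general fact that every element of such a closed system is a $\bbz_{\geq0}$-combination of its indecomposable elements makes the spanning step automatic, and the whole burden shifts to showing that the indecomposables of $\Phi^+_\gamma$ are precisely $\Pi_\gamma$ --- i.e.\ that each listed element is indecomposable (here your orthogonal-string fact rules out $\alpha=(\alpha+\gamma)+(-\gamma)$) and that every other element of $\Phi^+\setminus\{\gamma\}$ decomposes inside $\Phi^+_\gamma$. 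Reorganized this way, the argument closes; as written, it does not.
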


The above operation changing $\Pi$ into $\Pi_\gamma$ is called an odd reflection with respect to an isotropic root $\gamma$. Furthermore, the following result shows that any isotropic odd root $\theta$ in $\Phi^+$ gives rise to an odd reflection  associated with a suitable fundamental system containing  $\theta$.

\begin{lemma}\label{lem: isotropic root}
 (\cite[Lemma 1.29]{CW}
Let $\ggg$ be a basic Lie superalgebra. Let $\Pi$ be the fundamental system in a positive system $\Phi^+$, and let $\theta$ be an isotropic odd root in $\Phi^+$. Then there exists  {{$w\in \mathcal{W}$ such that $w(\theta)\in \Pi$, where $\mathcal{W}$ denotes the ordinary Weyl group of $\ggg_0$ as before (see \S\ref{sec: preword special cases})}}.
\end{lemma}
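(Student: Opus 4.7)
The plan is to prove the claim by induction on the $\Pi$-height $\operatorname{ht}(\theta)=\sum c_i$ of the expansion $\theta=\sum_{\alpha_i\in\Pi}c_i\alpha_i$ with $c_i\ge 0$. The base case $\operatorname{ht}(\theta)=1$ places $\theta$ in $\Pi$, so we take $w=\id$.

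For the inductive step with $\operatorname{ht}(\theta)\ge 2$, I would look for an even root $\alpha$, simple either in $\Pi$ (i.e.\ $\alpha\in\Pi_0:=\Pi\cap\Phi_0$) or, more generally, in the simple system $\Pi'$ of the reductive Lie algebra $\ggg_{\bz}$, such that $\langle\theta,\alpha^\vee\rangle>0$ and $s_\alpha(\theta)\in\Phi^+$. Since $s_\alpha\in\mathscr{W}$ preserves parity and the bilinear form, $s_\alpha(\theta)$ is again an isotropic odd root in $\Phi^+$; its $\Pi$-height equals $\operatorname{ht}(\theta)-\langle\theta,\alpha^\vee\rangle<\operatorname{ht}(\theta)$, and the inductive hypothesis furnishes $w'\in\mathscr{W}$ with $w'(s_\alpha(\theta))\in\Pi$, so $w:=w's_\alpha$ does the job. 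When $\alpha\in\Pi_0$, the positivity of $s_\alpha(\theta)$ is immediate from the standard fact that a simple reflection permutes $\Phi^+\setminus\{\alpha\}$: the expansion of $\theta$ in $\Pi$ must contain at least one coefficient $c_i>0$ with $\alpha_i\ne\alpha$ (otherwise $\theta$ would be a positive multiple of the even root $\alpha$, contradicting $\theta$ being odd), forcing all coefficients of $s_\alpha(\theta)$ to be non-negative.

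The crux is producing the required $\alpha$ whenever $\theta\notin\Pi$. Here I would exploit the isotropy identity
\[
0=(\theta,\theta)=\sum_{\alpha_i\in\Pi_0}c_i(\theta,\alpha_i)+\sum_{\alpha_j\in\Pi_1}c_j(\theta,\alpha_j),
\]
together with the decomposition $\ggg_{\bo}=\bigoplus_k V_k$ into irreducible $\ggg_{\bz}$-submodules: the root vector $e_\theta$ lies in a unique component $V_k$, whose isotropic weights form a $\mathscr{W}$-stable subset. Taking $\theta$ to be a $\Pi$-height minimal representative of its $\mathscr{W}$-orbit within $V_k\cap\Phi^+$ and combining the isotropy relation with the classification of fundamental systems for basic classical types should force $\theta\in\Pi$, since otherwise some $\alpha\in\Pi'$ with $\langle\theta,\alpha^\vee\rangle>0$ would exist and the reflection $s_\alpha(\theta)$ would remain positive (verified by comparing the $\Pi$-expansions of $\theta$ and $\alpha$, using that $\theta$ is isotropic).

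The main obstacle is carrying out this last step uniformly. Two issues arise: (i) when $\Pi_0=\emptyset$, which occurs for instance in the all-odd fundamental systems of type $A(m,n)$, one must allow $\alpha\in\Pi'\setminus\Pi_0$, and the positivity of $s_\alpha(\theta)$ then requires an argument comparing $\Pi$-coordinates rather than invoking the simple-root permutation property; (ii) the isotropy constraint alone does not obviously suffice to rule out ``anti-dominant'' configurations, so one really needs input from the $\ggg_{\bz}$-module structure of $\ggg_{\bo}$. A fallback is a finite case-by-case verification along $A(m,n)$, $B(m,n)$, $C(n)$, $D(m,n)$, $D(2,1;\alpha)$, $F(4)$, $G(3)$, where the $\mathscr{W}$-orbits on isotropic odd roots are explicitly known and intersection with $\Pi$ can be checked directly.
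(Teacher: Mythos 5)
First, a point of comparison: the paper does not actually prove this lemma --- it is imported verbatim from \cite[Lemma 1.29]{CW} --- so your argument cannot be checked against anything in the text and has to stand on its own.

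As written it does not stand, and you essentially say so yourself. The inductive scaffolding (drive down the $\Pi$-height of $\theta$ by even reflections until it lands in $\Pi$) is the natural and salvageable strategy, but the entire content of the lemma sits in the step you leave open: for a positive isotropic odd $\theta\notin\Pi$, produce a non-isotropic $\alpha$, simple for $\ggg_\bz$, with $\langle\theta,\alpha^\vee\rangle>0$ \emph{and} $s_\alpha(\theta)\in\Phi^+$. Your text defers this to ``should force'' and to a ``fallback'' case-by-case check that is not carried out, and the two obstacles you name are genuine: since the invariant form on $\hhh^*$ is indefinite, the relation $0=(\theta,\theta)=\sum_i c_i(\theta,\alpha_i)$ carries no sign information and cannot by itself exclude the anti-dominant configuration; and when the required $\alpha$ is simple for $\ggg_\bz$ but not in $\Pi$ (unavoidable, e.g.\ for the all-odd fundamental systems of $\gl(m|n)$, where $\Pi_0=\emptyset$), the fact that a simple reflection permutes the remaining positive roots applies to $\Phi_0^+$ but not to $\Phi^+$, so the positivity of $s_\alpha(\theta)$ is not ``immediate'' and a priori $s_\alpha(\theta)$ could be negative; this is exactly where the proof must do work. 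There is also a small slip: for $\alpha\notin\Pi$ the $\Pi$-height drops by $\langle\theta,\alpha^\vee\rangle\cdot\mathrm{ht}(\alpha)$ rather than by $\langle\theta,\alpha^\vee\rangle$ (harmless for the induction, but the formula as stated is wrong). To close the gap you would either run the induction on a simple root $\beta\in\Pi$ with $\theta-\beta\in\Phi^+$, treating separately the cases $\beta$ even, odd non-isotropic, and odd isotropic, or actually perform the finite verification over $A(m,n)$, $B(m,n)$, $C(n)$, $D(m,n)$, $D(2,1;\alpha)$, $F(4)$, $G(3)$ using the known $\mathscr{W}$-orbits of isotropic roots; until one of these is executed, what you have is a plan rather than a proof.
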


\subsection{Super Weyl groups} Denote by $\mathcal B$ the set of all Borel subalgebras of $\ggg$ containing $\hhh$. For a given Borel subalgebra $B\in \mathcal B$ and the corresponding positive root system $\Phi^+(B)$,  we have  $B=\hhh+\sum\limits_{\alpha\in\Phi^+(B)}\ggg_\alpha$.

For  an isotropic simple root $\gamma$ with respect to the simple root system $\Pi(B)$ of $B$,  there exists an odd reflection $r_\gamma$ (with respect
to $\gamma$) which transforms $B$ into a new Borel subalgebra $B^\gamma=\hhh+\sum\limits_{\alpha\in\Phi^+(B^\gamma)}\ggg_\alpha$ where $\Phi^+(B^\gamma)$ is the positive root system of $\Pi(B^\gamma)$ and $\Pi(B^\gamma)=\Pi(B)_\gamma=
\{\alpha\in\Pi(B)|(\alpha,\gamma)=
0,\alpha\neq\gamma\}\cup\{\alpha+\gamma|\alpha\in\Pi(B),(\alpha,\gamma)\neq0\}\cup\{-\gamma\}$ (cf.\cite[\S3.1]{PS} or \cite{CW}).

Similarly, the odd reflection $r_{-\gamma}$ transforms $B^{\gamma}$ into the Borel subalgebra $B$.  We will identify $r_\gamma$ with $r_{-\gamma}$ for such a pair $\{B, B^{\gamma}\}$. By letting  $r_{\gamma}$ fix  {any  other Borel in $\mathcal{B}$}  whenever their fundamental systems do not contain $\gamma$, nor do contain $-\gamma$, we  make  $r_\gamma$ become a transformation of $\mathcal{B}$.

As to any given ordinary reflection $s_{\theta_0}$ ($\theta_0$ is either an even root or a non-isotropic odd root. We can also identify $s_\gamma$ with $s_{2\gamma}$ if $\gamma$ is a non-isotropic odd root with $2\gamma\in \Phi_0$),  we can define an  ordinary reflection on $\Phi$ for $\theta_0\in \Phi_0$ (correspondingly $(\theta_0,\theta_0)\ne0$)), we still have the following map as usual:
\begin{align*}
s_{\theta_0}: &\Phi\rightarrow \Phi\cr
&\alpha\mapsto s_{\theta_0}(\alpha)=
\alpha-{{2(\lambda,\theta_0)}\over{(\theta_0,\theta_0)}}\theta_0.
\end{align*}
 Thus $s_{\theta_0}(B)= \hhh+\sum_{\theta\in \Phi(B)^+}\ggg_{s_{\theta_0}(\theta)}$ for any given Borel subalgebra $B=\hhh+\sum_{\theta\in \Phi(B)^+}\ggg_\theta\in \mathcal{B}$.

\begin{defn}\label{superWeyl} (\cite{PS}) The subgroup of the transformation group of $\mathcal{B}$ generated by all ordinary and all odd reflections is called {the super Weyl group of $\ggg$. We denoted it} by $\widehat{W}(\ggg)$, or $\widehat{W}$ if the context is clear.
\end{defn}

{From now on, we will make use of the unified notation $\widehat r_\theta$ denoting both an odd reflection and an ordinary (even) reflection in  $\widehat{W}$. Obviously, all ordinary reflections and  odd reflections $\widehat r_\theta$  satisfy  $\widehat r^2_\theta=\hbox{id}$.  All of them are called super-reflections.}

Due to \cite[Proposition 1.32]{CW}, $\widehat W$ shows an important property that any two fundamental systems of $\ggg$ are conjugate {under $\widehat W$; that is to say, any two Borel} subalgebras are conjugate under the action of $\widehat W$, in some natural sense.          

{
\begin{remark}\label{rem: 1.4} Heckenberger and Yamane introduced in \cite{HY} a groupoid related to basic classical Lie superalgebras motivated
by Serganova's work, and in \cite{H} the notion of the Weyl groupoid for Nichols algebras.  Their basic idea is to consider generalized Coxeter groupoid, in contrast with ours.   It is interesting to look for the relationship between their generalization of Coxeter groups and our super Weyl groups.
\end{remark}
}

\subsection{}\label{sec: fun sys} Now we recall some basic material and notations. One can also refer to \cite{CW} or \S\ref{sec: 2.1}-\S\ref{sec: bas notations}.

Set $I(m|n)=\{\bar 1,\ldots, \bar m; 1,\ldots,n\}$ with a total order $\bar 1<\cdots<\bar m<0<1<\cdots<n$.
 Keep in mind some notations $\varepsilon_i$ for $i\in I(m|n)$ (see \S\ref{sec: bas notations}).
 For convenience of expression, we will often write
 \begin{align}\label{eq: delta exp bar}
 \delta_{i}=\varepsilon_{\bar i} \text{ for  }1\leq i\leq m.
 \end{align}

 Then we can talk about fundamental (root) systems of Lie superalgebras of type $A$, $B$, $C$ and $D$.
 \subsubsection{} If $\ggg$ is of type $A(m|n)$, $B(m)=\mathfrak{spo}(2m|1)$ or $C(m)=\mathfrak{spo}(2m|2)$, then its fundamental root system is listed as below
\begin{itemize}
\item[(i)] For $A(m|n)$, $\Pi=\{\delta_1-\delta_2,\cdots,\delta_n-\varepsilon_1,\varepsilon_1-\varepsilon_2,\cdots,\varepsilon_{m-1}-\varepsilon_m\}$.
\item[(ii)] { For $B(m)$, $\Pi=\{ \delta_1-\delta_2,\ldots,\delta_{m-1}-\delta_m,\delta_m\}$.}

\item[(iii)] For $C(m)$, $\Pi=\{\varepsilon_1-\delta_1, \delta_1-\delta_2,\delta_2-\delta_3,\ldots,\delta_{m-1}-\delta_m, 2\delta_m\}$.

\end{itemize}

The above three cases  $A(m|n)$, $B(m)$ and $C(m)$ are exceptional, with respect to our arguments below.

\subsubsection{} Let $\ggg$ be a basic classical simple Lie superalgebra of type different from type $A(m|n)$, $B(m)$, and $C(m)$. The standard fundamental root system $\Pi$ of  $\ggg$ is as listed below (\textit{cf}. \cite[\S2.5]{K} or \cite[\S1.3]{CW}):
\begin{align}\label{F1.3}
&\mathfrak{spo}(2m|2n+1), n\geq1:
  & \Pi=\{\delta_1-\delta_2,\cdots,\delta_m-\varepsilon_1,\varepsilon_1-\varepsilon_2,\cdots,\varepsilon_{n-1}-\varepsilon_n,\varepsilon_n\};\cr
&\mathfrak{spo}(2m|2n), n\geq 2:
  &\Pi=\{\delta_1-\delta_2,\cdots,\delta_m-\varepsilon_1,\varepsilon_1-\varepsilon_2,\cdots,\varepsilon_{n-1}-\varepsilon_n,\varepsilon_{n-1}+\varepsilon_n\};\cr
&F(4):& \Pi=\{ {1\over 2}(\varepsilon_1+ \varepsilon_2+ \varepsilon_3+\delta), -\varepsilon_3, \varepsilon_3-\varepsilon_1, \varepsilon_1-\varepsilon_2\};\cr
&G(3):& \Pi=\{\delta-\varepsilon_1,   \varepsilon_2-\varepsilon_3,  -\varepsilon_2\};    \cr
&D(2,1,\alpha):& \Pi=\{\delta+ \varepsilon_1+ \varepsilon_2, -2\varepsilon_1, -2\varepsilon_2\}.
\end{align}
Then by the extended standard fundamental system  $\widetilde \Pi$ for $\ggg$  { we mean }what follows(cf. \cite[\S3.2]{PS}):
\begin{align}\label{eq: ext fund sys}
&\mathfrak{spo}(2m|2n+1), m\geq1:& \widetilde\Pi= \{\alpha_0:=-2\delta_1\}\cup \Pi ;\cr
&\mathfrak{spo}(2m|2n), m\geq 2: & \widetilde\Pi=\{\alpha_0:=-2\delta_1\}\cup \Pi ;\cr
&F(4):& \widetilde\Pi=\{\alpha_0:=-\delta\}\cup\Pi;\cr
&G(3):& \widetilde\Pi=\{\alpha_0:=-2\delta\}\cup\Pi;    \cr
&D(2,1,\alpha):& \widetilde\Pi=\{\alpha_0:=-2\delta\}\cup \Pi.
\end{align}
For the above non-exceptional cases, the extended  standard Dynkin diagram corresponding to $\widetilde\Pi$ is   a diagram extending the standard Dynkin diagram associated with $\Pi$, by adding a vertex $\alpha_0$ connected to the first vertex of the standard Dynkin diagram. Adding such a vertex is exactly for assurance that the extended system $\widetilde\Pi$  contains  the standard fundamental root system $\Pi_0$ of $\ggg_\bz$. Consequently, the Weyl group of $\ggg$ naturally {turns into a  Coxeter subgroup inside $\widehat W$.}
 {All $\widehat r_{\theta}$ with $\theta\in \widetilde\Pi$ will be called simple {\sl{super-reflections}}.}

\subsubsection{}\label{sec:  type B(m)} As to the cases $A(m|n)$ and $C(m)$, we set $\widetilde\Pi=\Pi$. The case $B(m)$ will be not considered in this  paper because it does not admit any isotropic odd root, and its super Weyl group coincides with the ordinary Weyl group as mentioned in (0.4.2) of \S\ref{sec: main results forward}.

\subsection{}

 We have the following observation which is critical to  our arguments on Coxeter groups.

\begin{lemma}\label{lem: critical lem} The following statements hold.
\begin{itemize}
\item[(1)] Any two fundamental systems $\Pi_1$ and $\Pi_2$ are conjugate by the super Weyl group $\whw$, i.e. there exists $w\in \whw$ such that $\Pi_2=w(\Pi_1)$.
\item[(2)] The super Weyl group $\whw$ is generated by the simple super-reflections, that is, $\whw=\langle \widehat r_{\theta}\mid \theta\in\widetilde \Pi \rangle$.
\item[(3)] Any fundamental system comes from the standard {one by means of a composition of} simple reflections $\widehat r_{\theta}$ arising from some $\theta\in \widetilde \Pi$.
\end{itemize}
\end{lemma}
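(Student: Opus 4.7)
The plan is to prove the three statements in the order (1), (2), (3), since (3) will follow immediately from (1) and (2). The key technical input is a conjugation identity expressing any generating odd reflection of $\whw$ as a conjugate of an odd reflection at a simple root of the standard $\Pi$.

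Statement (1) is essentially the content of \cite[Proposition 1.32]{CW}, already invoked in the excerpt directly after Definition \ref{superWeyl}, so I would simply cite it. For (2), set $\whw' := \langle \widehat r_\theta : \theta \in \widetilde\Pi\rangle \subseteq \whw$ and show that every generator of $\whw$ lies in $\whw'$. If the generator is an ordinary reflection $s_\alpha$ with $\alpha \in \Phi_0$, then $s_\alpha \in \scrw(\ggg_\bz)$; by the very construction of $\widetilde\Pi$ in \S\ref{sec: fun sys}, the reflections $\widehat r_\theta$ for $\theta \in \widetilde\Pi \cap \Phi_0$ generate $\scrw(\ggg_\bz)$ (which is the stated purpose of adjoining the vertex $\alpha_0$), so $\scrw \subseteq \whw'$ and in particular $s_\alpha \in \whw'$. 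If instead the generator is an odd reflection $r_\gamma$ with $\gamma$ isotropic, then replacing $\gamma$ by $-\gamma$ if necessary (using $r_\gamma = r_{-\gamma}$) we may assume $\gamma \in \Phi^+$, and Lemma \ref{lem: isotropic root} supplies $w \in \scrw$ with $w(\gamma) \in \Pi \subseteq \widetilde\Pi$. The conjugation identity $r_\gamma = w^{-1} r_{w(\gamma)} w$ then exhibits $r_\gamma$ as a product of three elements of $\whw'$, proving (2). Statement (3) is now immediate: given any fundamental system $\Pi'$, part (1) yields some $v \in \whw$ with $\Pi' = v(\Pi)$, and by (2) this $v$ is a product of super simple reflections $\widehat r_\theta$ with $\theta \in \widetilde\Pi$.

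The main obstacle is justifying the conjugation identity $r_\gamma = w^{-1} r_{w(\gamma)} w$ for $w \in \scrw$, which must be verified at the level of transforms of $\mathcal{B}$ since odd reflections are combinatorial operations on Borel subalgebras rather than linear maps on the root space. I would prove it by establishing $w(\Pi''_\gamma) = (w(\Pi''))_{w(\gamma)}$ for every fundamental system $\Pi''$ containing $\gamma$, using the explicit description $\Pi''_\gamma = \{\alpha \in \Pi'' : (\alpha,\gamma) = 0,\ \alpha \ne \gamma\} \cup \{\alpha+\gamma : \alpha \in \Pi'',\ (\alpha,\gamma) \ne 0\} \cup \{-\gamma\}$ from Lemma \ref{fun lemm}: since $w$ is a linear isometry of the root space preserving the bilinear form, each of the three pieces transforms as required, and the conjugation identity then holds on every Borel of $\mathcal{B}$, both sides fixing those Borels whose fundamental system contains neither $\gamma$ nor $-\gamma$.
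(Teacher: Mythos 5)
Your proposal is correct. The paper's own proof of this lemma is a bare citation --- (1) is quoted from \cite[Lemma 3.2]{PS}, (2) from \cite[Lemma 3.4]{PS}, and (3) is deduced from the first two exactly as you do --- so the substantive difference is that you actually prove (2). Your source for (1), namely \cite[Proposition 1.32]{CW}, is the same fact the paper itself invokes right after Definition \ref{superWeyl}, so that substitution is harmless. For (2), your reduction is sound: every ordinary reflection lies in $\scrw$, which is generated by the simple reflections attached to $\Pi_0\subseteq\widetilde\Pi$ (the stated purpose of adjoining $\alpha_0$ in (\ref{eq: ext fund sys})), and this also absorbs the reflections $s_\gamma$ at non-isotropic odd roots via the identification $s_\gamma=s_{2\gamma}$ with $2\gamma\in\Phi_0$ made in the paper --- a case you should flag explicitly rather than leave implicit in the phrase ``$\alpha\in\Phi_0$''. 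For the odd reflections, the conjugation identity $r_\gamma=w^{-1}r_{w(\gamma)}w$ is indeed the crux, and your verification is the right one: since $w\in\scrw$ preserves the bilinear form, the three pieces of the formula in Lemma \ref{fun lemm} transform equivariantly, giving $w(\Pi''_\gamma)=(w(\Pi''))_{w(\gamma)}$ on Borels whose fundamental system contains $\gamma$ or $-\gamma$, while both sides fix the remaining Borels; combined with Lemma \ref{lem: isotropic root} this writes $r_\gamma$ as a word in $\textsf{S}_{\widetilde\Pi}$. What your route buys is a self-contained argument (essentially reconstructing what the cited lemma of \cite{PS} establishes); what the paper's route buys is brevity, at the cost of an external dependence.
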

\begin{proof}
(1) follows  \cite[Lemma 3.2]{PS}. (2) follows   (\cite[Lemma 3.4]{PS}).  (3) follows from the former parts.
\end{proof}

\subsection{Coxeter groups and Coxeter graphs associated with $\widehat W$}\label{sec: cox sys}
\subsubsection{}\label{sec: cox sys order}
Let us recall some material on Coxeter groups and Coxeter graphs (see \cite[Chapter 1]{GP}).
For a given finite index set $\sX$, consider a symmetric matrix $\cM=(m_{xy})_{x,y\in\sX}$  whose entries are positive integers or $\infty$, such that $m_{xx}=1$ and $m_{x,y}>1$ for all different $x,y\in \sX$. Define a group $\sG(\cM)$ via the following presentation
$$\langle x\in\sX\mid  (xy)^{m_{xy}}=1 \text{ for }  y\in\sX, m_{xy}<\infty\rangle.$$
The pair $(\cM,\sX)$ is called a Coxeter system.
\subsubsection{}  With any symmetric matrix $\mathcal{M}$ mentioned above, we associate a graph,
called the Coxeter graph. It has vertices labelled by the elements of $S$, and two vertices labelled by $x \ne y$ are joined by an edge if $m_{xy} \geq 3$ (with $m_{xy}$ as in the above definition). Moreover, if $m_{xy}\geq 4$, we label the edge by $m_{xy}$.

\subsubsection{Coxeter systems for $\ggg$ and their subsystems} The following result is {a starting point for} our arguments in this paper.
\begin{prop}\label{prop: gen system}
The super Weyl group $\widehat W$ for $\ggg$ is a quotient of the Coxeter group associated with the generator system $\textsf{S}_{\widetilde\Pi}:=\{\widehat r_{\theta}\mid \theta\in\widetilde \Pi \}$.
\end{prop}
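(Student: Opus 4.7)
The plan is to exhibit $\widehat W$ explicitly as a homomorphic image of an abstract Coxeter group whose Coxeter matrix is read off from orders of pairwise products of the super simple reflections inside $\widehat W$ itself. The heart of the matter is just the universal property of the Coxeter group presentation; almost all the substance has already been supplied by Lemma \ref{lem: critical lem} and by the fact (noted in \S0.3 of the introduction) that each $\widehat r_\theta$ is an involution.

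First, using that $\mathcal{B}$ is finite, the transform group of $\mathcal{B}$ is finite, hence $\widehat W$ is finite. In particular, for each pair $x,y\in\widetilde\Pi$ the element $\widehat r_x\widehat r_y\in\widehat W$ has some finite order, and we set
\[
m_{xy}:=\mathrm{ord}_{\widehat W}(\widehat r_x\widehat r_y),\qquad x,y\in\widetilde\Pi.
\]
Then $m_{xx}=1$ because $\widehat r_x^2=\id$ (every super simple reflection is an involution, as recalled in \S\ref{sec: cox sys order}), and $m_{xy}>1$ for $x\ne y$ since otherwise $\widehat r_x=\widehat r_y$ but distinct simple roots give distinct reflections on $\mathcal{B}$ (in the ordinary case this is standard, and in the odd/mixed case it is built into the definition in \cite{PS}). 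The matrix $\mathcal{M}=(m_{xy})_{x,y\in\widetilde\Pi}$ is symmetric in $x,y$ because $(\widehat r_y\widehat r_x)=(\widehat r_x\widehat r_y)^{-1}$ has the same order as $\widehat r_x\widehat r_y$. Hence $(\mathcal{M},\widetilde\Pi)$ is a Coxeter system in the sense of \S\ref{sec: cox sys order}, and we obtain the associated Coxeter group $\mathcal{C}:=\sG(\mathcal{M})$ with abstract generators $\{\widetilde r_\theta\mid \theta\in\widetilde\Pi\}$.

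Next, I would define a map $\varphi\colon\mathcal{C}\to\widehat W$ on generators by $\widetilde r_\theta\mapsto \widehat r_\theta$. Since, by the very choice of $\mathcal{M}$, the relations
\[
\widehat r_\theta^2=\id,\qquad (\widehat r_x\widehat r_y)^{m_{xy}}=\id\;\;(x\ne y,\; m_{xy}<\infty)
\]
hold in $\widehat W$, the universal property of the Coxeter presentation shows that $\varphi$ extends uniquely to a group homomorphism $\mathcal{C}\to\widehat W$. Finally, Lemma \ref{lem: critical lem}(2) says that $\widehat W=\langle\widehat r_\theta\mid\theta\in\widetilde\Pi\rangle$, so $\varphi$ is surjective. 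Consequently $\widehat W\cong\mathcal{C}/\ker\varphi$ is a quotient of the Coxeter group $\mathcal{C}$.

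The only point that needs any care at all, and which I would expect to be the main (minor) obstacle, is verifying that the numbers $m_{xy}$ are genuinely finite and that distinct $\widehat r_\theta$ remain distinct in $\widehat W$; both reduce to the finiteness of $\mathcal{B}$ and to the concrete description of odd reflections from Lemma \ref{fun lemm}. Everything else is formal, and moreover this proof makes transparent why the bulk of the paper is devoted to computing the specific integers $m_{xy}$: it is exactly these entries, as emphasized in \S\ref{sec: preword special cases}, that determine the Coxeter graph of the ambient Coxeter group $\mathcal{C}$ of which $\widehat W$ is a quotient.
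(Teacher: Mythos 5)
Your proof is correct and is essentially the argument the paper intends: the paper's own proof is the one-line "It follows from Lemma \ref{lem: critical lem}," and what you have written is exactly the standard fleshing-out of that remark — define $m_{xy}$ as the order of $\widehat r_x\widehat r_y$ in the finite group $\widehat W$, invoke the universal property of the Coxeter presentation, and use Lemma \ref{lem: critical lem}(2) for surjectivity. No substantive difference in approach.
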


\begin{proof}
It follows from
 Lemma \ref{lem: critical lem}.
\end{proof}

So what we will do  is to describe the Coxeter group associated with $\widetilde\Pi$, which will be denoted by $\widehat{\mathscr{C}}(\ggg)$, or simply by $\wsc$. The Coxeter system is just $(\wsc, \ssp)$.
This means that we need compute the numbers $m_{xy}$ for all different $x,y\in \widetilde\Pi$. 

Denote by $\textsf{S}_{\Pi_0}$ the set of simple reflections corresponding to the standard fundamental system $\Pi_0$ of $\ggg_\bz$. By the construction and  analysis in \S\ref{sec: fun sys},  $(\scrw, \spo)$   { is  a Coxeter subsystem} of $(\wsc,\ssp)$.

\section{Defining sequences and Coxeter graphs for type $A(m|n)$}

\subsection{}\label{sec: 2.1}
The general linear Lie superalgebra $\frak {gl}(m|n)$ consists of block matrices of size $m|n$ as

\begin{equation}\label{matix}g=\begin{pmatrix}a&b\\c&d\end{pmatrix}
\end{equation}

Let  $\hhh$ be the Cartan subalgebra of $\ggg:=\frak {gl}(m|n)$ consisting of all diagonal matrices.
We parametrize the rows and columns of the matrices by the set $I(m|n)=\{\bar 1,\ldots, \bar m; 1,\ldots,n\}$ with a total order $\bar 1<\cdots<\bar m<0<1<\cdots<n$.

\subsection{}\label{sec: bas notations} Recall that the superstrace $\str$ is a function on $\ggg$, which is defined via   $$\str(g)=\tr(a)-\tr(d)$$
for $g$ in (\ref{matix}). The special linear Lie superalgebra $\frak{sl}(m|n)$ is a subalgebra of $\frak{gl}(m|n)$ consisting of block matrices $g$ with $\str(g)=0$. Recall that $\frak{gl}(m|n)$ and $\frak{sl}(m|n)$ have the same root system, sharing the same root theory and the same (super) Weyl group theory. So we will only consider $\frak{gl}(m|n)$ for type $A$.

Let $\{E_{ii}|i\in I(m|n)\}$ be the basis of $\hhh$.
{The superstrace restricted to $\hhh$ defines a  non-degenerate symmetric  bilinear form $(-,-)$ via the formula $(g',g''):=\textsf{str}(g',g'')$.} Then for $i,j\in I(m|n)$,
\begin{align*}
	(E_{ii},E_{jj})=
	 \begin{cases}
	 1 & \text{if}  ~\bar 1\leq i=j\leq \bar m,\cr
	 -1 &\text{if}   ~1\leq i=j\leq n,\cr
     0  &\text{if}   ~i\neq j.
	 \end{cases}
		 \end{align*}

 Denote by $\{\varepsilon_{\bar i}, \varepsilon_{j}|1\leq i\leq m, 1\leq j\leq n\}$  the basis of $\hhh^*$ dual to $\{E_{\bar i \bar i}, E_{jj}|1\leq i\leq m, 1\leq j\leq n\}$. Then $\varepsilon_{\bar i}=(E_{\bar i \bar i},-)$ and $\varepsilon_{j}=-(E_{j j},-)$.

The bilinear form $(-,-)$ on $\hhh$ induces  non-degenerate symmetric  bilinear form on $\hhh^*  $ which is also denoted by $(-,-)$. Then for $i,j\in I(m|n)$,  \begin{equation*}
	(\varepsilon_{i},\varepsilon_{j})
= \begin{cases}
	 1 & \text{if}  ~\bar 1\leq i=j\leq \bar m,\cr
	 -1 &\text{if}   ~1\leq i=j\leq n,\cr
     0  &\text{if}   ~i\neq j.
	 \end{cases}
	 	 \end{equation*}

\subsection{Examples: the super Weyl group  $\widehat{W}$  and the Coxeter groups $\wsc$  for $\mathfrak{gl}(1|2)$ and $\mathfrak{gl}(1|3)$}
\subsubsection{}The super Weyl group $\whw$ of $\gl(1|2)$ was {explicitly described} in \cite[Example 3.5]{PS}. Here we make a new exposition with some different  parameters.
\begin{example}\label{gl12}

Let $\ggg=\gl(1|2)$ and $\hhh$ be the standard Cartan subalgebra which  consists
of diagonal matrices.  Then all Borels subalgebras containing $\hhh$ correspond to  the following fundamental systems
$\{ \varepsilon_{\bar{1}}-\varepsilon_1,\varepsilon_1-\varepsilon_2\}$, $\{\varepsilon_1- \varepsilon_{\bar{1}}, \varepsilon_{\bar1}-\varepsilon_2\}$, $\{\varepsilon_1-\varepsilon_2,\varepsilon_2- \varepsilon_{\bar{1}}\}$,
$\{\varepsilon_{\bar1}- \varepsilon_{2}, \varepsilon_{2}-\varepsilon_1\}$, 			$\{\varepsilon_2- \varepsilon_{\bar{1}}, \varepsilon_{\bar{1}}-\varepsilon_1\}$,
	$\{\varepsilon_2-\varepsilon_1,\varepsilon_1- \varepsilon_{\bar{1}}\}$ which we parametrize by the numbers $\{1,2,\ldots,6\}$, respectively.

Simply denote such fundamental systems by arrangement sequences of
$\{\bo, 1,2\}$:
\begin{align}\label{eq: codes for gl 12}
\{\bar{1}12, {1}\bar12, 12\bar{1}, \bar121, 2\bar{1}1, 21\bar{1}\}
\end{align}
where the sequences  correspond to the ordered arrangements of subscripts in the expressions of simple roots.
Then the  $6$ fundamental systems are exactly arising from  the total permutations of  the sequence $\bar{1}12$, i.e. all elements of the symmetric group $S_3$.
For simplicity we write odd reflections
	 $ \widehat r_{\varepsilon_{\bar{1}}-\varepsilon_1}$, $ \widehat r_{\varepsilon_{\bar{1}}-\varepsilon_2}$ as $\widehat r_{\bo 1}, \widehat r_{\bo2}$ respectively. Also the ordinary reflection $\widehat r_{\varepsilon_1-\varepsilon_2}$ is written as $\widehat r_{12}$.
Thus, we can draw the following diagram which shows the fundamental systems and the change of them  by reflections:
\begin{center}
	\begin{tikzpicture}
	[line width = 1pt,
	gray/.style = {circle, draw, fill = gray, minimum size = 0.3cm},
	empty/.style = {circle, draw, fill = white, minimum size = 0.3cm}]

	\node (1) at (-2,0) {$\overset{\bar{1}12}{(1)}$};
	
	\node (2) at (0,0) {$\overset{1\bar{1}2}{(2)}$};
	
	\node (3) at (2,0) {$\overset{12\bar{1}}{(3)}$};
	
	\node (4) at (-2,-2) {$\overset{\bar{1}21}{(4)}$};
	
	\node (5) at (0,-2) {$\overset{2\bar{1}1}{(5)}$};
	
	\node (6) at (2,-2) {$\overset{21\bar{1}}{(6)}$};

	\draw[<->][red] (1) --(2);
	
	\draw[<->][red] (3) --(2);

	\draw[<->][red] (4) --(5);
	
	\draw[<->][red] (6) --(5);
	
	\draw[<->][red] (1) --(4);
	
	\draw[<->][red] (2) --(5);
	
	\draw[<->][red] (3) --(6);
	
	\node at(-1.75,-1){ $\widehat r_{12}$};
	
	\node at(0.25,-1){ $\widehat r_{12}$};
	
	\node at(2.25,-1){ $\widehat r_{12}$};
	
	\node at(-1,0.25){ $\widehat r_{\bar{1}1}$};
	
	\node at(1,0.25){ $\widehat r_{\bar{1}2}$};
	
	\node at(-1,-1.75){ $\widehat r_{\bar{1}2}$ };
	
	\node at(1,-1.75){$\widehat r_{\bar{1}1}$};	
	\end{tikzpicture}
\end{center}
Then we have $\widehat r_{\bar{1}1}=(12)(56),\widehat r_{\bar{1}2}=(23)(45),\widehat r_{12}=(14)(25)(36)$.
	
	By calculation, we have $\widehat r_{\bar{1}2} = \widehat r_{12}\cdot \widehat r_{\bar{1}1}\cdot \widehat r_{12}$ and	
	 $(\widehat r_{12}\cdot \widehat r_{\bar{1}1})^6=1$.	So the super Weyl group
	$\widehat{W}(\mathfrak{gl}(1|2))$ can be generated by $\widehat r_{12}$  and $\widehat r_{\bar{1}1}$. 	
	So 	$\widehat{W}(\mathfrak{gl}(1|2))\cong I_{2}(6)$, which is the dihedral group of order 12.
	The super Weyl group $\widehat{W}(\mathfrak{gl}(1|2))$  exactly coincides with $\wsc(\mathfrak{gl}(1|2))$, {which is the
Coxeter group generated} by $\widehat r_{12}$  and $\widehat r_{\bo 1}$ with Coxeter graph:	

\begin{center}
\begin{tikzpicture}
		
		\node[circle,draw,minimum size=8pt,inner sep=0pt,fill=white] (r1) at (-2,1) {};
		\draw (r1.135) -- (r1.315);
		\draw(r1.225) -- (r1.45);
		
		\node[circle,draw,minimum size=8pt,inner sep=0pt,fill=white] (r2) at (0,1) {};

		\draw (r1) -- node[above=2pt] {\footnotesize 6} (r2);
		
		\node[below=3pt] at (r1) {$\widehat r_{\bar{1}1}$};
		\node[below=3pt] at (r2) {$\widehat r_{12}$};
		
\end{tikzpicture}
\end{center}
%
This Coxeter group is exactly of type $G_2$.
\end{example}

%

%
\begin{remark}\label{rem: white and black} As showed in the above example,  we will draw all Coxeter graphs for super Weyl groups associated with extended standard fundamental systems with the vertices presented as white dots when corresponding to ordinary  reflections, and presented as gray dots when corresponding to odd reflections.  {Here and further  we follow Kac's conventions,  in which the gray dot $\bigotimes$ denotes the isotropic odd simple root.  As the odd reflection is defined by an isotropic odd simple root,  we naturally indicate the odd reflections  by gray dots.}
\end{remark}
\subsubsection{} {It is worth mentioning that} the above example is one of extremely few ones where the super Weyl groups just coincides the associated  Coxeter groups. Let us  show this with a little nontrivial example.

\begin{example} Consider $\ggg=\mathfrak{gl}(1|3)$. Keep in mind that $\widetilde\Pi=\Pi=\{\varepsilon_{\bo}-\varepsilon_1,\varepsilon_1-\varepsilon_2,\varepsilon_2-\varepsilon_3\}$. In the same way as in Example \ref{gl12}, we have $\whw=\langle \widehat r_{\bo 1}, \widehat r_{12},\widehat r_{13} \rangle$.
Similarly to Example \ref{gl12}, we can draw the following diagram.
\begin{center}
	\begin{tikzpicture}
	[line width = 1pt,
	gray/.style = {circle, draw, fill = gray, minimum size = 0.3cm},
	empty/.style = {circle, draw, fill = white, minimum size = 0.3cm}]

	\node (1) at (-4,0) {$\overset{\bar{1}123}{(1)}$};
	
	\node (2) at (-2,0) {$\overset{1\bar{1}23}{(2)}$};
	
	\node (3) at (0,0) {$\overset{12\bar{1}3}{(3)}$};
	
	\node (4) at (2,0) {$\overset{123\bar{1}}{(4)}$};
	
	\node (5) at (-4,-1) {$\overset{\bar{1}132}{(5)}$};
	
	\node (6) at (-2,-1) {$\overset{1\bar{1}32}{(6)}$};
	
	\node (7) at (0,-1) {$\overset{13\bar{1}2}{(7)}$};
	
	\node (8) at (2,0-1) {$\overset{132\bar{1}}{(8)}$};
	
	\node (9) at (-4,-2) {$\overset{\bar{1}213}{(9)}$};
	
	\node (10) at (-2,-2) {$\overset{2\bar{1}13}{(10)}$};
	
	\node (11) at (0,-2) {$\overset{21\bar{1}3}{(11)}$};
	
	\node (12) at (2,-2) {$\overset{213\bar{1}}{(12)}$};
	
	\node (13) at (-4,-3) {$\overset{\bar{1}231}{(13)}$};
	
	\node (14) at (-2,-3) {$\overset{2\bar{1}31}{(14)}$};
	
	\node (15) at (0,-3) {$\overset{23\bar{1}1}{(15)}$};
	
	\node (16) at (2,-3) {$\overset{231\bar{1}}{(16)}$};
	
	\node (17) at (-4,-4) {$\overset{\bar{1}312}{(17)}$};
	
	\node (18) at (-2,-4) {$\overset{3\bar{1}12}{(18)}$};
	
	\node (19) at (0,-4) {$\overset{31\bar{1}2}{(19)}$};
	
	\node (20) at (2,-4) {$\overset{312\bar{1}}{(20)}$};
	
	\node (21) at (-4,-5) {$\overset{\bar{1}321}{(21)}$};
	
	\node (22) at (-2,-5) {$\overset{3\bar{1}21}{(22)}$};
	
	\node (23) at (0,-5) {$\overset{32\bar{1}1}{(23)}$};
	
	\node (24) at (2,-5) {$\overset{321\bar{1}}{(24)}$};

	\draw[<->][red] (1) --(2);
	
	\draw[<->][red] (2) --(3);
	
	\draw[<->][red] (3) --(4);

	\draw[<->][red] (5) --(6);
	
	\draw[<->][red] (6) --(7);
	\draw[<->][red] (7) --(8);

	\draw[<->][red] (9) --(10);
	\draw[<->][red] (10) --(11);
	
	\draw[<->][red] (11) --(12);
	
	;
	\draw[<->][red] (13) --(14);
	
	\draw[<->][red] (14) --(15);
	
	\draw[<->][red] (15) --(16);

	\draw[<->][red] (17) --(18);
	
	\draw[<->][red] (18) --(19);
	\draw[<->][red] (19) --(20);

	\draw[<->][red] (21) --(22);
	
	\draw[<->][red] (22) --(23);
	\draw[<->][red] (23) --(24);
	
	\draw[<->][red] (17) --(18);

	\node at(-3,0.25){$\widehat r_{1\bar{1}}$};
	
	\node at(-1,0.25){$\widehat r_{2\bar{1}}$};
	
	\node at(1,0.25){$\widehat r_{3\bar{1}}$};
	
	\node at(-3,-0.75){$\widehat r_{1\bar{1}}$};
	
	\node at(-1,-0.75){$\widehat r_{3\bar{1}}$};
	
	\node at(1,-0.75){$\widehat r_{2\bar{1}}$};
	
	\node at(-3,-1.75){$\widehat r_{2\bar{1}}$};
	
	\node at(-1,-1.75){$\widehat r_{1\bar{1}}$};
	
	\node at(1,-1.75){$\widehat r_{3\bar{1}}$};
	
	\node at(-3,-2.75){$\widehat r_{2\bar{1}}$};
	
	\node at(-1,-2.75){$\widehat r_{3\bar{1}}$};
	
	\node at(1,-2.75){$\widehat r_{1\bar{1}}$};

	\node at(-3,-3.75){$\widehat r_{3\bar{1}}$};
	
	\node at(-1,-3.75){$\widehat r_{1\bar{1}}$};
	
	\node at(1,-3.75){$\widehat r_{2\bar{1}}$};
	
	\node at(-3,-4.75){$\widehat r_{3\bar{1}}$};
	
	\node at(-1,-4.75){$\widehat r_{2\bar{1}}$};
	
	\node at(1,-4.75){$\widehat r_{1\bar{1}}$};

	\end{tikzpicture}
\end{center}
Denote  three generators  by
$$a=\widehat r_{\bar{1}1}=(1,2)(5,6)(10,11)(15,16)(18,19)(23,24),$$
$$b=\widehat r_{23}=(1,5)(2,6)(3,7)(4,8)(9,17)(10,18)(11,19)(12,20)(13,21)$$$$(14,22)(15,23)(16,24),$$
$$c=\widehat r_{12}=(1,9)(2,10)(3,11)(4,12)(5,13)(6,14)(7,15)(8,16)(17,21)$$$$(18,22)(19,23)(20,24),$$
respectively.
Making use of the computer software SageMath, we have
 $$\widehat{W}(\mathfrak{gl}(1|3))=\langle a,b,c\mid a^2=1, b^2=1, c^2=1, (a  b)^2=1, (c  b)^3=1, (c  a)^{12}=1,(a  c  b  a  c)^6=1,((c  a  b)^2  (c  a)^3)^4$$
$$=1,a  c  a  b  (c  a  b  c  a)^2  b  ((c  a)^3  b  c  a)^2  ((b  c  a)^2  c  a)^2  c  a  b  (c  a)^3  c  b=1,
(a  b  (c  a)^2  b  (c  a  b  (c  a)^2)^2  (c  a  b)^2  (c  a)^2  b  c)^2=1 \rangle.$$

\subsubsection{} So the super Weyl group $\widehat{W}(\mathfrak{gl}(1|3))$ is already quite complicated although {it has small size.}  Generally, it is difficult to give the presentation of a super Weyl group via generators and defining relations. It is reasonable to describe the related Coxeter groups $\wsc$.  {We turn back} to the super Weyl group for $\mathfrak{gl}(1,3)$. The corresponding Coxeter group can be easily described via generator and relations as  following: $$\wsc(\mathfrak{gl}(1|3))=\langle a,b,c\mid a^2=1, b^2=1, c^2=1, (a  b)^2=1, (c  b)^3=1, (c  a)^{12}=1 \rangle.$$
The Coxeter graph  is

\begin{center}
	\begin{tikzpicture}
		
		\node[circle,draw,minimum size=8pt,inner sep=0pt,fill=white] (r1) at (-2,1) {};
		\draw (r1.135) -- (r1.315);
		\draw(r1.225) -- (r1.45);
		
		\node[circle,draw,minimum size=8pt,inner sep=0pt,fill=white] (r2) at (0,1) {};
		
		\node[circle,draw,minimum size=8pt,inner sep=0pt,fill=white] (r3) at (2,1) {};

		\draw (r1) -- node[above=2pt] {\footnotesize 12} (r2);
		\draw (r2) -- (r3);
		
		\node[below=3pt] at (r1) {$\widehat r_{\bar{1}1}$};
		\node[below=3pt] at (r2) {$\widehat r_{12}$};
		\node[below=3pt] at (r3) {$\widehat r_{23}$};
				
	\end{tikzpicture}
\end{center}
%
\end{example}

\subsection{Defining sequences for $A(m|n)$} \label{s4.1}
Now we turn to the general $\gl(m|n)$ or $\frak{sl}(m|n)$ with $m>0,n>0$.  Its root system
$\Phi=\Phi_{\bar{0}}\bigcup \Phi_{\bar{1}}$ is as follows (see for example \cite{CW}):
\begin{align*}
&\Phi_{\bar{0}}=\{\varepsilon_{i}-\varepsilon_{j}\mid i\neq j,\bar{1}\leq i,j\leq \overline{m}\text{~or} ~~1\leq i,j\leq n\}\cr
&\Phi_{\bar{1}}=\{\pm(\varepsilon_{i}-\varepsilon_{j})\mid \bar{1}\leq i\leq \overline{m},1\leq j\leq n\}.
\end{align*}
  Consider  the standard Borel subalgebra consisting of upper triangular matrices with the Cartan subalgebra $\hhh$ of diagonal matrices. Then the  corresponding standard positive system is
  $$\{\varepsilon_{i}-\varepsilon_{j}\mid i,j\in I(m\rvert n),i<j\}$$ with the standard fundamental system
$$\Pi=\{\varepsilon_{\bar{i}}-\varepsilon_{\overline{i+1}},
\varepsilon_{\overline{m}}-\varepsilon_{1},\varepsilon_{j}-\varepsilon_{j+1}
\mid
 1\leq i \leq m-1, 1\leq j \leq n-1\}.$$
In the same way as in Example \ref{gl12}, we assign a sequence $\bar{1}\bar{2}\bar{3}\cdots\overline{m}123\cdots n$ to present the standard fundamental system.


{{
Note that   any fundamental system  comes from the standard fundamental system by means of some permutation of $I(m|n)$. We can generally introduce the following definition.

\begin{definition}(Definition of defining sequences for $A(m|n)$)  \label{sec: general knowing for fun sys}
Given any Borel  subalgebra $B$ in $\mathcal{B}$ for the Lie superalgebra $\ggg$ of type $A(m|n)$, it  corresponds to some unique fundamental system $\Pi$, say
\begin{align}\label{eq: general knowing for fun sys}
\{\varepsilon_{i_1}-\varepsilon_{i_2}, \ldots, \varepsilon_{i_{m+n-1}}-\varepsilon_{i_{m+n}}\}
 \end{align}
 for $\{i_1, i_2,\ldots,i_{m+n}\}=I(m|n)$.
  The sequence $(i_1i_2\ldots i_{m+n})$ is then called the defining sequence of $\Pi$.

\end{definition}
}}

The total number of  fundamental systems for $\mathfrak{gl(m|n)}$ is $(m+n)!$. So we have the same number of defining sequences.

{{
\begin{remark} The notion of defining sequence is a central tool for  our arguments of this paper. We will introduce it separately in different types. For other types different from $A(m|n)$,  it will be complicated. The definition of defining sequence for type $B,C$ and $D$ will be introduced  after  the establishment of its existence and uniqueness, respectively (see Lemma \ref{lem: defining sec for D} and then Definition \ref{def: def seq D} for type $D(m|n)$, Lemma \ref{lem: lemma 2.9 for type C} and then Definition \ref{def: def seq C} for type $C(m)$, Lemma \ref{lem: lemma 2.11 for type B} and then Definition \ref{def: def seq B} for type $B(m|n)$).
\end{remark}
}}

\subsection{}\label{s4.2}
By definition (see Lemma \ref{fun lemm}),  $\widehat r_{\varepsilon_{\overline{m}} - \varepsilon_{1}}(\Pi)$ becomes $$\Pi_{\varepsilon_{\overline{m}} - \varepsilon_{1}}=\{\varepsilon_{\bar{i}}-\varepsilon_{\overline{i+1}},
\varepsilon_{\overline{m-1}}-\varepsilon_{1},
\varepsilon_{1}-\varepsilon_{\overline m}, \varepsilon_{\overline{m}}-\varepsilon_{2},
\varepsilon_{j}-\varepsilon_{j+1}\mid 1\leq i \leq m-2, 2\leq j \leq n-1\}.$$
The corresponding defining sequence of $\Pi_{\varepsilon_{\overline{m}} - \varepsilon_{1}}$  is $$(\bar{1}\bar{2}\bar{3}\cdots\overline{m-1}1\overline{m}23\cdots n).$$
Thus, we can consider the action of a  reflection   $\widehat r_{\theta}$ on the defining {sequences: in this way,} we have
$$\widehat r_{\varepsilon_{\overline{m}} - \varepsilon_{1}}(\bar{1}\bar{2}\bar{3}\cdots\overline{m}123\cdots n)=\bar{1}\bar{2}\bar{3}\cdots\overline{m-1}1\overline{m}23\cdots n.$$
  Consequently, an odd reflection plays a role of permutation for the sequences corresponding to fundamental systems. Keep this in mind. We can make computations for determination of the Coxeter graphs in the sequel.

\subsection{Coxeter graph for $\gl(m|n)$}\label{sec: 2.6}

In the following, we only need to describe $\gl(m|n)$ for $m+n\geq 4$ because the other cases have been dealt with before (note that $\gl(3|1)\cong\gl(1|3)$).  Keep the notations and conventions in \S\ref{sec: fun sys} and Remark \ref{rem: white and black}.
In a way similar to the examples before, recall that $ \delta_{i}=\varepsilon_{\bar i} \text{ for  }1\leq i\leq m$,  we simply write  $\widehat r_{\overline{i,i+1}}$, $\widehat r_{\overline m, 1}$ and $\widehat r_{j,j+1}$ for $\widehat r_{\delta_i-\delta_{i+1}}$, $\widehat r_{\delta_m-\varepsilon_1}$ and $\widehat r_{\varepsilon_j-\varepsilon_{j+1}}$ respectively, $i=1,\ldots,m-1$ and $j=1,\ldots,n-1$.

\begin{theorem}\label{Cox glmn}Let $\ggg=\mathfrak{gl}(m|n)$ with positive numbers $m,n$ satisfying $m+n\geq 4$.
{{The Coxeter graph associated with its Coxeter system $(\wsc,\ssp)$ is}}

\begin{center}
	\begin{tikzpicture}
		
		\node[circle,draw,minimum size=8pt,inner sep=0pt,fill=white] (r1) at (-4,1) {};

		\node[circle,draw,minimum size=8pt,inner sep=0pt,fill=white] (r2) at (-3,1) {};
		
		\node[circle,draw,minimum size=8pt,inner sep=0pt,fill=white] (r3) at (-2,1) {};
		
		\node[circle,draw,minimum size=8pt,inner sep=0pt,fill=white] (r4) at (-1,1) {};
		
		\node[circle,draw,minimum size=8pt,inner sep=0pt,fill=white] (r5) at (0,1) {};
		\draw (r5.135) -- (r5.315);
		\draw(r5.225) -- (r5.45);
		
		\node[circle,draw,minimum size=8pt,inner sep=0pt,fill=white] (r6) at (1,1) {};
		
		\node[circle,draw,minimum size=8pt,inner sep=0pt,fill=white] (r7) at (2,1) {};
		
		\node[circle,draw,minimum size=8pt,inner sep=0pt,fill=white] (r8) at (3,1) {};

		\draw (r1) -- node[above=2pt] {\footnotesize 4} (r2);
		\draw (r2) -- (r3);
		\node at(-1.5,1){ $\cdots$};
		\draw (r4) -- node[above=2pt] {\footnotesize 12} (r5);
		\draw (r5) -- node[above=2pt] {\footnotesize 12} (r6);
		\draw (r6) -- (r7);
		\node at(2.5,1){ $\cdots$};;

		\node[below=3pt] at (r1) {$\widehat r_{\overline{ 1 2}}$};
		\node[below=3pt] at (r2) {$\widehat r_{\overline{23}}$};
		\node[below=3pt] at (r3) {$\widehat r_{\overline{34}}$};
		\node[below=3pt] at (r4) {$\widehat r_{\overline{m-1,m}}$};
		\node[below=3pt] at (r5) {$\widehat r_{{\overline{m}1}}$};
		\node[below=3pt] at (r6) {$\widehat r_{12}$};
		\node[below=3pt] at (r7) {$\widehat r_{23}$};
		\node[below=3pt] at (r8) {$\widehat r_{n-1,n}$};
		
	\end{tikzpicture}
\end{center}

\end{theorem}

\begin{proof}
Note that the Coxeter graph of  the subsystem $(\scrw, \spo)$ is already known (see for example, \cite[Theorem 1.3.3]{GP}). Recall a general fact  in group theory that $m_{xy}=m_{yx}$ in \S\ref{sec: cox sys order}, i.e. the orders of  $xy$ and $yx$ are equal. So it suffices for us to compute $m_{xy}$ involving $x=\widehat r_{\overline{m}1}$ in \S\ref{sec: fun sys}  associated with Proposition \ref{prop: gen system}. For this, we need to consider the action of the powers of  $xy$ on  $\mathcal{B}$, i.e. on Borel subalgebras in $\mathcal{B}$, equivalently on defining sequences defined in \S\ref{s4.1}, for $x=\widehat r_{\overline{m}1}$ and all other $y\in\Pi$. All possibilities are considered case by case below.

(1) Consider the case when  $y=\widehat r_{\overline{i,i+1}}$ $(1\leq i \leq m-2)$. This is an ordinary (even) reflection. By definition, it is a linear transformation on $\hhh^*$, via exchanging the positions of the numbers $\bar{i}$ and $\overline{i+1}$. This property does not change the situation whether or not the fundamental system $\Pi(B)$ for a given $B\in \cb$ contains $\pm (\varepsilon_{\overline{m}}-\varepsilon_1)$, neither influence the positions of $\bar i$ and $\overline{i+1}$. This yields that $\widehat r_{\overline{i, i+1}}\cdot \widehat r_{\overline{m}1}= \widehat r_{\overline{m}1}\cdot \widehat r_{\overline{i,i+1}}$.  Hence  $(\widehat r_{\overline{i,i+1}}\cdot \widehat r_{\overline{m}1})^2=1$.

(2) Consider the case when $y=\widehat r_{j,j+1}$ $(2\leq j \leq n-1)$. In this case, $y$ is an ordinary (even) reflection too. With the same reasoning as above,  we still have  $(\widehat r_{j,j+1}\cdot \widehat r_{\overline{m}1})^2=1$.

(3) Consider the case when $y=\widehat r_{12}$. In this case, there are only three possible  defining sequences which have to be dealt with separately. As to the other defining sequences, $\widehat r_{\overline{m}1}\cdot \widehat r_{12}$ fixes them.

(Subcase 3.I) The case is when the defining sequences satisfy that  three numbers $1$, $2$, and $\overline{m}$ are adjacent one by one (for example, the sequences of the form like $(\cdots\overline{m}12\cdots)$).
We demonstrate the action of $\widehat r_{\overline{m}1}\cdot \widehat r_{12}$ on such sequences  which represent the related fundamental systems:
 \begin{align*}
 &(\cdots\overline{m}12\cdots)\overset
{\widehat r_{12}}{\longrightarrow}(\cdots\overline{m}21\cdots)\overset
{\widehat r_{\overline{m}1}}{\longrightarrow}\cr
&(\cdots\overline{m}21\cdots)
\overset
{\widehat r_{12}}{\longrightarrow}(\cdots\overline{m}12\cdots)\overset
{\widehat r_{\overline{m}1}}{\longrightarrow}\cr
&(\cdots1\overline{m}2\cdots)
\overset
{\widehat r_{12}}{\longrightarrow} (\cdots
2\overline{m}1\cdots)\overset
{\widehat r_{\overline{m}1}}{\longrightarrow}\cr
&(\cdots21\overline{m}
\cdots)\overset
{\widehat r_{12}}{\longrightarrow}(\cdots12\overline{m}\cdots)\overset
{\widehat r_{\overline{m}1}}{\longrightarrow}\cr
&(\cdots12\overline{m}\cdots)\overset
{\widehat r_{12}}{\longrightarrow}(\cdots21\overline{m}\cdots)\overset
{\widehat r_{\overline{m}1}}{\longrightarrow}\cr
&(\cdots2\overline{m}1\cdots)\overset
{\widehat r_{12}}{\longrightarrow}(\cdots1\overline{m}2\cdots)\overset
{\widehat r_{\overline{m}1}}{\longrightarrow}\cr
&(\cdots\overline{m}12\cdots).
\end{align*}
So for this case, the order of the action $\widehat r_{\overline{m}1}\cdot \widehat r_{12}$ on such sequences is $6$.

There are  other  possibilities of similar sequences:  $(\cdots\overline{m}21\cdots)$, $(\cdots1\overline{m}2\cdots)$, $(\cdots12\overline{m}\cdots)$, $(\cdots2\overline{m}1\cdots)$, $(\cdots,21\overline{m}\cdots)$. By the same arguments we can show that  the order of the action $\widehat r_{\overline{m}1}\cdot \widehat r_{12}$ on such sequences is $6$.

(Subcase 3.II) The case is when the defining sequences satisfy that among three numbers $\overline{m}$, $1$ and $2$, there are only two numbers (including $\overline m$) adjacent in the sequences, but the third one is remote to them (for example,  $\overline{m}$  and $ 1$ are adjacent to each other, $2$ is not adjacent to them). In this case,  $m\geq 3$ or $n\geq 3$, consequently, $m+n\geq 4$. We similarly demonstrate  the action of $\widehat r_{\overline{m}1}\cdot \widehat r_{12}$ on those sequences
\begin{align*}
&(\cdots\overline{m}1\cdots 2\cdots)\overset
{\widehat r_{12}}{\longrightarrow}(\cdots\overline{m}2\cdots 1\cdots)\overset
{\widehat r_{\overline{m}1}}{\longrightarrow}\cr
&(\cdots\overline{m}2\cdots 1\cdots)\overset
{\widehat r_{12}}{\longrightarrow}(\cdots\overline{m}1\cdots 2\cdots)\overset
{\widehat r_{\overline{m}1}}{\longrightarrow}\cr
&(\cdots1\overline{m}\cdots 2\cdots)\overset
{\widehat r_{12}}{\longrightarrow}(\cdots,2\overline{m}\cdots 1\cdots)\overset
{\widehat r_{\overline{m}1}}{\longrightarrow}\cr
&(\cdots,2\overline{m}\cdots 1\cdots)\overset
{\widehat r_{12}}{\longrightarrow}(\cdots1\overline{m}\cdots 2\cdots)\overset
{\widehat r_{\overline{m}1}}{\longrightarrow}\cr
&(\cdots\overline{m}1\cdots 2\cdots).
\end{align*}
  So the order of the action of $\widehat r_{\overline{m}1}\cdot \widehat r_{12}$ on such sequences is  $4$.

 There is another possibility of similar sequences: $\overline m$ and $2$ are adjacent while $1$ is remote.  By the same arguments we can show that  the order of the action $\widehat r_{\overline{m}1}\cdot \widehat r_{12}$ on such sequences is $4$.

Recall that there are $(m+n)!$ possible fundamental systems for $\mathfrak{gl(m|n)}$, and there are $(m+n)!$  defining sequences of length $m+n$.  When $m+n\geq 4$,  both cases  (3.I) and (3.II) necessarily occur on some sequences.  So the least common multiplicity of the orders  $6$ and $4$ respectively occurring  for two cases must be exactly the order of $\widehat r_{12}\cdot \widehat r_{\overline{m}1}$ which is equal to $12$.

 (4) The case when $y=\widehat r_{\overline{m-1,m}}$: in  this case, one can  similarly prove that order of $\widehat r_{\overline{m-1,m}}\cdot \widehat r_{\overline{m}1}$ is equal to $12$  as we have done in (3).

 Summing up, the proof is completed.
 \end{proof}

\begin{remark} Note that $\mathfrak{gl}(m|n)\simeq \mathfrak{gl}(n|m)$.
Example \ref{gl12} provides all possible information on Coxeter groups for the non-trivial general linear Lie algebras $\mathfrak{gl}(m|n)$ with  $m+n< 4$.
\end{remark}

\section{Defining sequences for fundamental systems for $\mathfrak{spo}(2m|l)$}\label{sec: def seq for osp}

 We first recall the root systems of $\mathfrak{spo}(2m|l)$. Then we introduce defining sequences for fundamental (root) systems.

\subsection{Root system for $\mathfrak{spo}(2m|2n)$}
{By definition, $\mathfrak{spo}(2m|2n)\subset\mathfrak{gl}(2m|2n)$ as a Lie subalgebra. The root system of $\mathfrak{spo}(2m|2n)$ can be understood in the same way as in the case of $\gl(2m|2n)$, by taking the standard Cartan subalgebra $\hhh$ to be the one consisting of  diagonal matrices in $\mathfrak{spo}(2m|2n)$.
}

 Keep the notations in \S\ref{sec: fun sys}, in particular we  write $\delta_i=\varepsilon_{\bar i}$.
Then the root system $\Phi=\Phi_{\bar{0}}\bigcup \Phi_{\bar{1}}$ of $\mathfrak{spo}(2m|2n)$
 is given by
 \begin{align*}
\{\pm\delta_{i}\pm\delta_{j}, \pm 2\delta_{p},\pm \varepsilon_{k}\pm \varepsilon_{l}\}\cup\{\pm\delta_{p}\pm\varepsilon_{q}\}
\end{align*}
where $1\leq i<j\leq m, 1\leq k<l\leq n, 1\leq p\leq m, 1\leq q\leq n$ (see for example, \cite{K, SW} or \cite[\S1.3.4]{CW}).

\subsubsection{}\label{sec: first type CD}
The standard positive system $\Phi^{+}=\Phi_{\bar{0}}^{+}\bigcup \Phi_{\bar{1}}^{+}$ corresponding to the standard Borel subalgebra for   $\mathfrak{spo}(2m|2n)$ is $\{\delta_{i}\pm\delta_{j},  2\delta_{p}, \varepsilon_{k}\pm \varepsilon_{l}\}\cup\{\delta_{p}\pm\varepsilon_{q}\}$
where $1\leq i<j\leq m, 1\leq k<l\leq n, 1\leq p\leq m, 1\leq q\leq n$.
The fundamental system $\Pi$ of $\Phi^{+}$ contains one odd simple root $\delta_{m}-\varepsilon_{1}$, and it is given by
\begin{align}\label{eq: FR first type CD}
\Pi(\mathfrak{spo}(2m|2n))=\{\delta_{i}-\delta_{i+1},\delta_{m}-\varepsilon_{1}, \varepsilon_{k}-\varepsilon_{k+1}, \varepsilon_{n-1}+\varepsilon_{n}\mid 1\leq i\leq m-1, 1\leq k\leq n-1\}.
\end{align}
Recall that the extended standard fundamental system  $\widetilde \Pi$ for $\mathfrak{spo}(2m|2n), n\geq 2$ is $\widetilde\Pi=\Pi\cup \{-2\delta_1\}$(cf. (\ref{eq: ext fund sys})), and that the Coxeter system is  $(\wsc, \ssp)$.

\subsubsection{}\label{sec: second type CD}  We  list another type of positive systems in $\Phi$:  $\{\delta_i\pm \delta_j, 2\delta_p, \varepsilon_k\pm \varepsilon_l\} \cup \{ \varepsilon_q\pm \delta_p\}$, where $1\leq i<j\leq m$ and $1\leq k<l\leq n$,  $1\leq p\leq m$,  and $1\leq q\leq n$. The fundamental system is
\begin{align}\label{eq: FR second type CD}
\{\varepsilon_k-\varepsilon_{k+1}, \varepsilon_n-\delta_1, \delta_i-\delta_{i+1}, 2\delta_m\mid 1\leq i\leq m-1, 1\leq k\leq n-1\}.
\end{align}

\subsubsection{Positive and fundamental systems for $\mathfrak{spo}(2m|2n)$}\label{S5.2}
Recall that there are  (at least) two Dynkin diagrams arising from fundamental systems, for $\mathfrak{spo}(2m|2n)$ of different shapes. The analysis on  them can be divided into $2$ cases (see \cite[\S1.3.4]{CW} for details).

{\textbf{Case 1: the fundamental systems do not contain any long root (i.e., a root of the form $2\delta_p$)}}

With parity ignored, the subset $\widetilde\Phi:=\Phi\backslash \{\pm 2\delta_p\mid 1\leq p\leq m\}$ may be identified with the root system of the classical Lie algebra $\mathfrak{so}(m+n)$, while all such fundamental systems of $\Phi$ in this case are exactly  the fundamental systems for $\widetilde \Phi$. The number of  fundamental systems that do not contain
any long root (i.e., a root of the form $2\delta_p$) is $\frac{n}{m+n}2^{m+n-1}\cdot(m+n)! =n2^{m+n-1}\cdot(m+n-1)! $ (cf. \cite[\S1.3.4]{CW}).

{\textbf{Case 2: the fundamental systems contain some long root}}
In this case, we consider $\widehat\Phi:=\Phi\cup \{\pm 2\varepsilon_j\mid 1\leq j\leq n\}$. With the parity ignored, $\widehat\Phi$ may be identified with the root system of $\mathfrak{sp}(2m+2n)$, while all such fundamental systems of $\Phi$ in this case are exactly  the fundamental systems for $\widehat \Phi$.

The number of  fundamental systems that contain some long root  is $\frac{m}{m+n}2^{m+n}\cdot(m+n)! =m2^{m+n}\cdot(m+n-1)!$ (cf. \cite[\S1.3.4]{CW}).

\subsubsection{} The number of all positive systems of $\mathfrak{spo}(2m|2n)$ is $$\frac{n}{m+n}2^{m+n-1}\cdot(m+n)!+\frac{m}{m+n}2^{m+n}\cdot(m+n)!=(n+2m)2^{m+n-1}\cdot(m+n-1)!
$$
{{\text{ (\cite[\S1.3.4]{CW})}}}.

\subsection{Root system for $\mathfrak{spo}(2m|2n+1)$}\label{s5.1}

By definition, $\mathfrak{spo}(2m|2n+1)\subset\mathfrak{gl}(2m|2n+1)$ as a Lie subalgebra. The root system of $\mathfrak{spo}(2m|2n+1)$ can be understood in the same way as in the case of $\gl(2m|2n+1)$, by taking the standard Cartan subalgebra $\hhh$ to be the one consisting of diagonal matrices in $\mathfrak{spo}(2m|2n+1)$.
 Then  the root system can be described as:
$\Phi=\Phi_{\bar{0}}\bigcup \Phi_{\bar{1}}$  with $$\{\pm\delta_{i}\pm\delta_{j}, \pm 2\delta_{p},\pm \varepsilon_{k}\pm \varepsilon_{l},\pm \varepsilon_{q}\}\cup\{\pm\delta_{p}\pm\varepsilon_{q},\pm\delta_{p}\}$$
where $1\leq i<j\leq m, 1\leq k<l\leq n, 1\leq p\leq m, 1\leq q\leq n$.

Similar to $\mathfrak{spo}(2m|2n)$, one can list further information on root systems of $\mathfrak{spo}(2m|2n+1)$ (see \cite[\S1.3.4]{CW}). We omit here.

\subsection{General convention concerning super reflections of  $\mathfrak{spo}(2m|l)$}\label{sec: order conv after odd general} Let $\ggg=\mathfrak{spo}(2m|l)$ with $l=2n+1$ or $2n$, not less than $2$, and $\Pi_0$  an ordered simple fundamental (root) system of $\ggg$ containing an isotropic odd root $\theta$:
$$\Pi_0=\{\theta_1,\ldots,\theta_k=\theta,\ldots, \theta_{N-1}, \theta_N\}$$
with $N:=m+n$.
Denote by $\Pi_0'$ the fundamental system $\widehat r_{\theta}(\Pi_0)$, which comes from $\Pi_0$ by transformation via the odd reflection  $\widehat r_{\theta}$.
 We make a convention on the order of $\Pi'_0$ as below {unless stated otherwise}
 \begin{align}\label{eq:  change order after odd ref}
\{\theta'_1,\ldots,  \theta'_{k-1}
, -\theta_{k}, \theta'_{k+1},\ldots, \theta'_N\}
\end{align}
where we denote for $i\ne k$
\begin{align}\label{eq:  change order after odd ref-2}
\theta_i'=\begin{cases} \theta_i+\theta, &\text{ if } (\theta_i,\theta)\ne0;\cr
\theta_i, &\text{   otherwise}.
\end{cases}
\end{align}
As to an ordinary (even) reflection $\widehat r_\alpha$, it is a well-known fact that for any ordered fundamental root system $\Pi_0=\{\theta_1,\ldots,\theta_N\}$, it becomes under the action of $\widehat r_\alpha$,
$$\Pi'_0=\{\widehat r_\alpha(\theta_1),\ldots,\widehat r_\alpha(\theta_N)\}$$
which is in an order corresponding to the order of $\Pi_0$.

\subsubsection{Principle to determine the order of a fundamental system}
\label{sec: order conv after odd}
 Keep the notations as in the previous subsection. In particular, let $\Pi_0=\{\theta_1,\ldots,\theta_k=\theta,\ldots, \theta_{N-1}, \theta_N\}$ be an ordered simple fundamental (root) system of $\ggg$ containing  an isotropic odd root $\theta=\pm(\vep_1-\delta_m)$.
The modified convention on the order of $\Pi'_0=\widehat r_\theta(\Pi_0)$ is as follows
 \begin{itemize}
 \item[(Case 1)] if $k=N$ (i.e. $\theta_N=\theta$) and $\theta_{N-1}=\mp(\delta_m+\vep_1)$,
then we define the order of $\Pi_0'$ to be as presented below
 \begin{align}\label{eq: Pi 1 exp}
\{\theta'_1,\ldots, \theta'_{N-2},-\theta,\mp 2\delta_m\}
\end{align}
where we  exchanged the position of the $(N-1)$th and the $N$th new simple roots such that $\mp 2\delta_m$
only appears in the end, and where
we denote for $i\ne N-1, N$,
\begin{align*}
\theta_i'=\begin{cases}
\theta_i+\theta, &\text{ if } (\theta_i,\theta)\ne0;\cr
\theta_i, &\text{   otherwise }.
\end{cases}
\end{align*}
 \item[(Case 2)] Otherwise,  we define  the order to be as presented below
 \begin{align*}
\{\theta'_1,\ldots, \ldots, \theta'_{k-1}
, -\theta_{k}, \theta'_{k+1},\ldots, \theta'_N\}
\end{align*}
where we denote for $i\ne k$
\begin{align*}
\theta_i'=\begin{cases} \theta_i+\theta, &\text{ if } (\theta_i,\theta)\ne0;\cr
\theta_i, &\text{   otherwise}.
\end{cases}
\end{align*}
\end{itemize}
As to an ordinary (even) reflection $\widehat r_\alpha$, it is a well-known fact that for any ordered fundamental root system $\Pi_0=\{\theta_1,\ldots,\theta_N\}$, it becomes under the action of $\widehat r_\alpha$,
$$\Pi'_0=\{\widehat r_\alpha(\theta_1),\ldots,\widehat r_\alpha(\theta_N)\}$$
which is in an order corresponding to the order of $\Pi_0$.

\subsubsection{}
From now on, we will use the following convention  for $\widetilde{i}\in\{\bar 1,\ldots,\overline{m-1}\}\cup \{1,\ldots,n-1\}$.
\begin{conven}
Set $\vep_{\widetilde i}-\vep_{\widetilde{i+1}}:=\vep_{\bar i}-\vep_{\overline{i+1}}$ ($=\delta_i-\delta_{i+1}$) if   $\widetilde {i}\in\{ \bar1,\ldots,\overline{m-1}\}$,  and $\vep_i-\vep_{i+1}$ if $\widetilde i\in \{1,\ldots,n-1\}$.
\end{conven}

\subsection{Type $D(m|n)$}\label{sec: corres type D} Now we first consider $\ggg=\mathfrak{spo}(2m|2n)$ with $n\geq 2$. We have to modify the general convention  in \S\ref{sec: order conv after odd general} concerning the order of a fundamental system after the action of odd simple reflections.

Keep  in mind that the order of $\Pi$ for  $\ggg=\frak{spo}(2m|2n)$ ($n\geq 2$) is the one given below  $$\Pi=\{\delta_1-\delta_2,\ldots,\delta_m-\varepsilon_1,
\varepsilon_1-\varepsilon_2,\ldots,
\varepsilon_{n-1}-\vep_n, \vep_n+\vep_{n-1}\}.$$
 Any two adjacent roots of the ordered $\Pi$  form a pair of the following form: either $\vep_{\widetilde i}-\vep_{\widetilde{i+1}}, \vep_{\widetilde{i+1}}-\vep_{\widetilde{i+2}}$ {with $\widetilde i\in\{\bar 1,\ldots,\overline {m-2}\}\cup\{1,\ldots,n-2\}$, or $\delta_m-\vep_1, \vep_1-\vep_2$,  or $\varepsilon_{n-1}-\vep_n, \vep_n+\vep_{n-1}$, or $\delta_{m-1}-\delta_m, \delta_{m-1}-\delta_1$.}

The signs of positive  and negative, say, $\natural,\sharp\in \{\pm1\}$, have to be taken into consideration after transformations on $\Pi$ through {the super Weyl group action.}

\subsubsection{} \label{eq: Pi 1 exp} With the  above conventions, especially in \S\ref{sec: order conv after odd},  we define the order of any fundamental system $\Pi_1={w}(\Pi)$ which comes from the standard fundamental system $\Pi$ by a composition of super simple reflections $w=\widehat r_l\cdots \widehat r_1$,  via one-to-one correspondence between $\Pi_1$ and $\Pi$ through the action of $w$.

\subsubsection{} After the above preparation, we proceed with arguments on the precise description of the ordered fundamental systems.

We first make Assumption \ref{conv: appendix} {which is available for all $\mathfrak{spo}(2m|l)$}, then we show the assumption holds true.

\begin{convention}\label{conv: appendix} Let $\Pi_1$ be a fundamental system with order as {{in \S\ref{sec: order conv after odd}.}}
Then {any two adjacent roots} are of the forms like $\natural\vep_{i}-\sharp\vep_{j}, \sharp\vep_j+\heartsuit$ satisfying the following conditions
\begin{itemize}
\item[(1)] $\vep_i\ne\vep_j$ for  $i,j\in I(m|n)$ with $i\ne j$.

\item[(2)] Both $\natural\vep_{i}-\sharp\vep_{j}$ and $\sharp\vep_j+\heartsuit\in \Phi$ with $\heartsuit=\pm \varepsilon_k$ for some $k\in I(m|n)$.

\item[(3)] $\Pi_1$ is of the form
\begin{align}\label{eq: general fs form}
\{\natural_1\vep_{i_1}-\sharp_2\vep_{i_2}, \natural_2\vep_{i_2}-\sharp_3\vep_{i_3}, &\ldots,\ldots,\cr
 \ldots,&\natural_{m+n-1}\vep_{i_{m+n-1}}-\sharp_{m+n}\vep_{i_{m+n}}, \natural_{m+n}\vep_{i_{m+n}}+\sharp_{m+n+1}\vep_{i_{m+n+1}}\}
\end{align}
with $\sharp_k=\natural_k$ for $k=2,\ldots,m+n-1$,
where the sequence $(i_1i_2\cdots i_{m+n})$ is a permutation of the sequence $(\bar1\overline2\cdots\overline{m}12\cdots n)$.
\end{itemize}
\end{convention}
In the above assumption, $\natural_l\vep_{i_l}$ is called the {{front}} term of the $l$th root, and $\sharp_{l+1}\vep_{i_{l+1}}$ is called its rear term. Still set $N=m+n$ in the following.

\begin{lemma}\label{lem: str lem for def seq} Suppose $\Pi_0=\{\theta_1,\ldots, \theta_k, \theta_{k+1},\ldots, \theta_{N}\}$ is of the form  (\ref{eq: general fs form}),  and $\Pi_0'=\widehat r_{\theta}(\Pi_0)$ is the transformation of $\Pi_0$ under  the odd reflection $\widehat r_{\theta}$ for {$\theta\in\{\pm(\delta_m-\vep_1)\}$}. Then  $\Pi_0'$  is also  of the form (\ref{eq: general fs form}).
\end{lemma}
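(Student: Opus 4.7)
The plan is to locate $\theta$ within the ordered sequence $\Pi_0 = \{\theta_1, \ldots, \theta_N\}$, apply the odd reflection formula of Lemma \ref{fun lemm} termwise, then invoke the reordering convention of \S\ref{sec: order conv after odd}, and finally verify items (1)--(3) of Assumption \ref{conv: appendix} one by one for the resulting $\Pi_0'$.

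First I would identify the unique $k$ with $\theta_k = \theta$. Because $\theta = \pm(\delta_m - \vep_1)$ is isotropic odd, and by the shape of (\ref{eq: general fs form}) the pair of indices $(i_k, i_{k+1})$ appearing in $\theta_k$ must involve one barred and one unbarred symbol, so $k \in \{1, 2, \ldots, N\}$ and the precise form of $\theta_k$ is known. The key computation is then a direct application of Lemma \ref{fun lemm}: the entry at position $k$ flips sign to $-\theta_k$; the two neighbours $\theta_{k-1}, \theta_{k+1}$, when present, pick up a copy of $\theta_k$ and their shared-index terms cancel thanks to the sign alignment $\sharp_k = \natural_k$ (and $\sharp_{k+1} = \natural_{k+1}$ where applicable); all other $\theta_j$ are fixed, since they share no $\vep$-index with $\theta_k$ and therefore have vanishing inner product with it.

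Next I would apply the reordering convention. In the generic Case~2 of \S\ref{sec: order conv after odd}, a straightforward bookkeeping shows the new defining sequence is obtained from the old by transposing the two entries $i_k$ and $i_{k+1}$, with all other indices unchanged. Each of the three items of Assumption \ref{conv: appendix} is then immediate: (1) holds because a transposition of a permutation is still a permutation; (2) holds because the modified neighbours $\theta'_{k\pm 1}$ are explicit elements of $\Phi(\mathfrak{spo}(2m|2n))$, being differences or sums of two distinct $\vep$'s; and (3) holds because the sign-matching data at each transition point of the new sequence can be read off symbolically from the old one and continues to satisfy $\sharp_l^{\mathrm{new}} = \natural_l^{\mathrm{new}}$ at every interior $l$.

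The main obstacle will be the boundary Case~1, where $k = N$ and $\theta_{N-1}$ has the specific shape $\mp(\delta_m + \vep_1)$: here the convention additionally swaps the last two positions so that the long root is forced to the very end. I would treat this case separately by writing out $-\theta$ and $\theta_{N-1} + \theta$ explicitly, confirming that the resulting last entry is indeed the prescribed long root in $\Phi$ and that after the swap the new sequence still conforms to items (1)--(3) (in particular, the ``minus-shape for positions $1,\ldots,N-1$ and plus-shape for position $N$'' is preserved). Combining Cases~1 and~2 yields the lemma.
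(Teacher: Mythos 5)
The generic case $k<N$ of your argument matches the paper's, but your treatment of the boundary case $k=N$ has a genuine gap: you \emph{assume} that when the isotropic root sits in the last position, its neighbour $\theta_{N-1}$ automatically has the shape $\mp(\delta_m+\vep_1)$, and then you only verify that the swap convention produces the long root $\mp 2\delta_m$ correctly. But nothing in the form (\ref{eq: general fs form}) forces that shape a priori, and this is precisely where the real work of the lemma lies. The paper must (i) rule out by contradiction the configuration $\theta_N=\pm(\delta_m-\vep_1)$, i.e.\ the one whose initial term is $\pm\delta_m$ --- this is done by locating the unique position $l$ where $\vep_1$ occurs as an initial term and showing that both $l=N-1$ and $l<N-1$ lead to absurdities (linear dependence of two simple roots, or a non-root such as $2\vep_1$ or $-2\vep_1-\sharp_{l+1}\vep_{i_{l+1}}+\delta_m$ being forced to be a root via Lemma \ref{fun lemm}); and (ii) in the remaining configuration $\theta_N=\pm(\vep_1-\delta_m)$, rule out that the root carrying $\delta_m$ as its initial term sits at a position $l<N-1$, since a root $-\delta_m-\sharp_{l+1}\vep_{i_{l+1}}$ there would have nonzero pairing with $\theta$ and the reflection would produce the non-root $-2\delta_m-\sharp_{l+1}\vep_{i_{l+1}}+\vep_1$. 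Only after these exclusions is one entitled to conclude that $\theta_{N-1}=\mp(\delta_m+\vep_1)$, so that Case 1 of the reordering convention applies and yields a system of the form (\ref{eq: general fs form}) ending in $\mp 2\delta_m$.

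Without steps (i) and (ii) your case analysis is incomplete: if $k=N$ but $\theta_{N-1}$ did not have the required shape, your ``generic Case 2'' bookkeeping would not apply (the reflection would not be a mere transposition of indices and could produce a system violating the $\sharp_l=\natural_l$ constraints or containing a non-root). Relatedly, your claim that ``all other $\theta_j$ are fixed, since they share no $\vep$-index with $\theta_k$'' needs care for the last root, whose rear term $\sharp_{N+1}\vep_{i_{N+1}}$ repeats an earlier index and could in principle involve $\delta_m$ or $\vep_1$; such configurations are also excluded only by root-system arguments of the same kind, not by the shape of (\ref{eq: general fs form}) alone.
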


\begin{proof}
By definition, if any of the elements   $\theta=\natural(\delta_m-\vep_1)$ and $\natural(\vep_1-\delta_m)$ do not appear in $\Pi_0$, then $\widehat r_{\theta}$ keeps $\Pi_0$ fixed pointwise. In this case, there is nothing  to do.

Now suppose that $\theta_k$ is equal to $\natural(\delta_m-\vep_1)$ or $\natural(\vep_1-\delta_m)$ for $k\in \{1,\ldots,N\}$.

(1) If $\theta_k=\natural(\delta_m-\vep_1)$, or $\natural(\vep_1-\delta_m)$ with $k<m+n$, then $\theta_{k+1}=\natural\vep_1-\sharp_{k+2}\vep_{i_{k+2}}$ or $\natural\delta_m-\sharp_{k+2}\vep_{i_{k+2}}$ respectively. For example, if $\theta_k=\delta_m-\vep_1$, by the assumption on $\Pi_0$ it is easily deduced that $\Pi_0$ must be of the form:
$$(\cdots, \natural_{k-1}\vep_{i_{k-1}}-\delta_m,\delta_m-\vep_1, \vep_1-\sharp_{k+2}\vep_{i_{k+2}},\cdots),$$
and by the transformation of $\widehat r_{\delta_m-\vep_1}$, $\Pi_0$ becomes
$$\widehat r_{\delta_m-\vep_1}(\Pi_0)= (\cdots, \natural_{k-1}\vep_{i_{k-1}}-\vep_1, \vep_1-\delta_m, \delta_m-\sharp_{k+2}\vep_{i_{k+2}},\cdots) $$
which is still of the form (\ref{eq: general fs form}).  As to the situation $\theta_k=-(\delta_m-\vep_1)$ or $\natural(\vep_1-\delta_m)$ with $k<N (=m+n)$, by the same arguments it is still shown that $\widehat r_{\delta_m-\vep_1}(\Pi_0)$ is of the form (\ref{eq: general fs form}).

(2) The remaining case is  $\theta_k=\natural(\delta_m-\vep_1)$, or $\natural(\vep_1-\delta_m)$ with $k=N$,  with which  we need to make a special treatment.

(2.1)  First we deal with the case when $\theta_N=\pm(\delta_m-\vep_1)$. Then  the assumption on $\Pi_0$ yields that {the last but one root of $\Pi_0$ by its order is of} the form
 $$\natural_{N-1}\vep_{i_{N-1}}- (\pm\delta_m).$$
 We claim that  {this cannot occur: we prove that by contradiction.  Suppose it does.}  We first deal with the case when  $\theta_N=\delta_m-\vep_1$ where $\delta_m$ is the front term. Then $\vep_1$ must appear in some $l$th root as its {{front}} term with $l<N$ because $\Pi_0$ is of the form (\ref{eq: general fs form}).

(Case 1) Suppose $l=N-1$, i.e.  $\vep_{i_{N-1}}=\vep_1$. So the $(N-1)$th simple root is  of the form $\pm \vep_1-\delta_m$. This is absurd, because  this contradicts the property of fundamental  root systems.   Precisely, the occurrence of  $\vep_1-\delta_m$ leads to the linearly-dependence of {the last root and the last but one} in $\Pi_0$. The occurrence of $-\vep_1-\delta_m$ leads to the absurdity of $2\vep_1$ being a root, which is deduced by the  property (see Lemma \ref{fun lemm}) that $(-\vep_1-\delta_m, \delta_m-\vep_1)\ne0$ implies that the sum of both is still a root.

 (Case 2) If $l<N-1$, let us analyse the form of the $l$th root in $\Pi_0$. First it is impossible that its front term is  $\vep_1$ because this leads to the sum of all roots running through the $l$th root to the $(N-1)$th {root to be equal to} $\vep_1-\delta_m$. This sum must be a positive root with respect to $\Pi_0$, contradicting the $N$th root being $\delta_m-\vep_1$ in $\Pi_0$.  Suppose $-\vep_1$ is the {{front}} term of the $l$th root, then its rear term is never $\delta_m$ (otherwise, the $l$th term becomes $-\vep_1+\delta_m$  linearly dependent on the last root). Hence the $l$th root $-\vep_1-\sharp_{l+1}\vep_{i_{l+1}}$ admits the property that $(-\vep_1-\sharp_{l+1}\vep_{i_{l+1}}, \delta_m-\vep_1)\ne0$, consequently $-2\vep_1-\sharp_{l+1}\vep_{i_{l+1}}+\delta_m$ is a root by Lemma \ref{fun lemm}. This is absurd.

By the same arguments, we can exclude the situation when the last root is $-\delta_m+\vep_1$ with front term $-\delta_m$.

(2.2) Now we deal with the situation when $\theta_N=\pm(\vep_1-\delta_m)$, which means the front term is $\pm\vep_1$, correspondingly the rear term of $\theta_{N-1}$ is $\mp\vep_1$.

 We first deal with the case when $\theta_N=\vep_1-\delta_m$ with front term $\vep_1$. Then the rear term of $\theta_{N-1}$ is $-\vep_1$.
Let us analyse the possible root $\theta_l$ whose front term is $\natural\delta_m$. Such $l$ is unique, due to the assumption on $\Pi_0$. We proceed with different subcases.

(Case 1) If $l=N-1$. Then $\theta_{N-1}=-\delta_m-\vep_1$. Correspondingly, $\theta_{N-2}=\natural_{N-2}\vep_{i_{N-2}}+\delta_m$ with $\vep_{i_{N-2}}\ne \vep_1$, $\delta_m$. In this case, after the transformation $\widehat r_{\vep_1-\delta_m}=\widehat r_{\delta_m-\vep_1}$, $\Pi_0$ changes into $$\Pi_0'=(\cdots\ldots\ldots, -2\delta_m, \delta_m-\vep_1)$$
where its $(N-2)$th root becomes $\natural_{N-2}\vep_{i_{N-2}}+\varepsilon_1$,  the first $N-3$ roots do not change. By the convention in \S\ref{sec: order conv after odd},  the order of $\Pi_0'$ is defined to be the one as below
 $$\Pi_0'=(\cdots\ldots\ldots,\natural_{N-2}\vep_{i_{N-2}}+\varepsilon_1,  -\vep_1+\delta_m, -2\delta_m)$$
 which is still of the form (\ref{eq: general fs form}). This is what is desired.

(Case 2) If $l<N-1$, the front term of $\theta_{N-1}$ cannot be $\delta_m$ because if so, then $\delta_m-\vep_1$ becomes a positive root in the positive root system arising from $\Pi_0$ by the same arguments in (Case 2) of the previous  (2.1).  So  $\theta_{l}=-\delta_m-\sharp_{l+1}\vep_{i_{l+1}}$. Now $\vep_{i_{l+1}}$ is not identical to $\pm\vep_1$ because $\Pi_0$ is of the form (\ref{eq: general fs form}), and $l<N-1$. Thus $(\theta_l, \vep_1-\delta_m)\ne0$, which leads to the contradiction that $-2\delta_m-\sharp_{l+1}\vep_{i_{l+1}}+\vep_1$ is a root.

By the same arguments, we can prove that $\Pi_0'$ is still of the form (\ref{eq: general fs form}) when the last root is $-\vep_1+\delta_m$ with {{front}} term $-\vep_1$.
\end{proof}

According to the previous proof,  the following result is directly  concluded.
\begin{corollary}  Keep the notations and assumptions as the previous lemma. Particularly,
$$\Pi_0=\{\theta_1,\ldots, \theta_k, \theta_{k+1},\ldots, \theta_{N}\}$$
 is of the form  (\ref{eq: general fs form}). If $\theta_N=\pm (\varepsilon_1-\delta_m)$, then $\theta_{N-1}=\mp(\delta_m+\varepsilon_1)$.
\end{corollary}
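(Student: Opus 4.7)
The plan is to extract the corollary directly from the subcase analysis already carried out in case~(2.2) of the proof of the previous lemma. The argument has two parallel branches, one for each sign in $\theta_N = \pm(\vep_1-\delta_m)$; I will treat the branch $\theta_N=\vep_1-\delta_m$ (initial term $+\vep_1$) in detail and deduce the opposite branch by symmetry.

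First I would use the chaining structure of~(\ref{eq: general fs form}) to pin down the shape of $\theta_{N-1}$. Since $\theta_N$ has initial term $+\vep_1$, i.e.\ $\natural_N=+1$ and $i_N=1$, the rear term of $\theta_{N-1}$ must share the same index $1$ and, by the matching condition in~(2) of Assumption~\ref{conv: appendix}, the same sign; hence $\theta_{N-1}=\natural_{N-1}\vep_{i_{N-1}}-\vep_1$ for some index $i_{N-1}\ne 1$ and some sign $\natural_{N-1}\in\{\pm 1\}$. It remains to pin down $i_{N-1}$ and $\natural_{N-1}$.

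Next, since $(i_1,\ldots,i_N)$ is a permutation of $I(m|n)$ and $i_N=1$, the index $\bar m$ (equivalently, $\delta_m$) appears in some position $l<N$. I would claim $l=N-1$; this is exactly the content of the subcase analysis in case~(2.2) of the previous lemma, where $l<N-1$ was ruled out by the following dichotomy: an initial term $+\delta_m$ at position $l$ would force the telescoping sum $\theta_l+\cdots+\theta_{N-1}$ to equal $\vep_1-\delta_m$ and thus contradict the fact that $\theta_N=\vep_1-\delta_m$ is itself a simple root; while an initial term $-\delta_m$ at position $l$ with rear term not $\pm\vep_1$ (forced by $l<N-1$) yields $(\theta_l,\theta_N)\ne 0$, whence Lemma~\ref{fun lemm} produces the inadmissible root $\theta_l+\theta_N=-2\delta_m-\sharp_{l+1}\vep_{i_{l+1}}+\vep_1$. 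Thus $\vep_{i_{N-1}}=\delta_m$.

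Finally I would rule out $\natural_{N-1}=+1$, since that would make $\theta_{N-1}=\delta_m-\vep_1$ linearly dependent on $\theta_N=\vep_1-\delta_m$, contradicting $\Pi_0$ being a fundamental system. Hence $\natural_{N-1}=-1$ and $\theta_{N-1}=-(\delta_m+\vep_1)$, which is the desired conclusion in this branch; the branch $\theta_N=-(\vep_1-\delta_m)$ is obtained by flipping every sign in the above. The main obstacle, already resolved in the previous lemma, is the exclusion of $l<N-1$ via the isotropic-root identity of Lemma~\ref{fun lemm}; once that input is in hand, the corollary reduces to the elementary sign bookkeeping described above.
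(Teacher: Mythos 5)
Your argument is correct and follows the paper's route exactly: the paper derives this corollary by simply pointing back to the case analysis (2.2) in the proof of Lemma \ref{lem: str lem for def seq}, and you have reassembled precisely those pieces — the chaining of rear/initial terms from Assumption \ref{conv: appendix}, the exclusion of $l<N-1$ via the telescoping sum and the isotropic-root identity of Lemma \ref{fun lemm}, and linear independence to force $\natural_{N-1}=-1$. The only blemish is a harmless sign slip: with initial term $+\delta_m$ at position $l$ the telescoping sum $\theta_l+\cdots+\theta_{N-1}$ equals $\delta_m-\vep_1$ rather than $\vep_1-\delta_m$, but either value produces the same contradiction with $\theta_N=\vep_1-\delta_m$ being a (positive) simple root.
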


Next we look at the situation about the ordinary (even) reflections.

\begin{lemma}\label{lem: str lem for def seq ordianary case} Suppose $\Pi_0=\{\theta_1,\ldots, \theta_k, \theta_{k+1},\ldots, \theta_{N}\}$ is of the form  (\ref{eq: general fs form}),  and $\Pi_0'=\widehat r_{\theta}(\Pi_0)$ the transformation of $\Pi_0$ under  the ordinary (even) reflection $\widehat r_{\alpha}$ for $\alpha=\vep_{\widetilde{i}}-\vep_{\widetilde{i+1}}$ with $\widetilde{i}\in\{\bar 1,\ldots,\overline{m-1}\}\cup \{1,\ldots,n-1\}$, $2\delta_1$ or $\vep_{n}+\vep_{n+1}$. Then  $\Pi_0'$  is also  of the form (\ref{eq: general fs form}).
\end{lemma}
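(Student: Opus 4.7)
The plan is to exploit the linearity of $\widehat r_\alpha$ together with the observation that, for each of the three kinds of even reflection allowed in the statement, the induced action of $\widehat r_\alpha$ on the basis $\{\vep_k\mid k\in I(m|n)\}$ of $\hhh^*$ is a signed permutation: there exist a bijection $\sigma$ of $I(m|n)$ and signs $\eta_k\in\{\pm 1\}$ so that $\widehat r_\alpha(\vep_k)=\eta_k\vep_{\sigma(k)}$ for every $k$. Explicitly, for $\alpha=\vep_{\widetilde i}-\vep_{\widetilde{i+1}}$ one has $\sigma$ transposing $\widetilde i$ and $\widetilde{i+1}$ with all $\eta_k=1$; for $\alpha=2\delta_1$ one has $\sigma=\id$ and only $\eta_{\bar 1}=-1$; and for the last type $\alpha=\vep_{n-1}+\vep_n$ (the reflection associated with the long root of the second-kind $D$-type fundamental system in (\ref{eq: FR first type CD})) one has $\sigma$ transposing $n-1$ and $n$ with $\eta_{n-1}=\eta_n=-1$.

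Granting this, I would apply $\widehat r_\alpha$ termwise to the presentation (\ref{eq: general fs form}) to obtain
\begin{align*}
\widehat r_\alpha(\theta_j) &= (\natural_j\eta_{i_j})\vep_{\sigma(i_j)} - (\sharp_{j+1}\eta_{i_{j+1}})\vep_{\sigma(i_{j+1})} \quad (1\le j\le N-1),\\
\widehat r_\alpha(\theta_N) &= (\natural_N\eta_{i_N})\vep_{\sigma(i_N)} + (\sharp_{N+1}\eta_{i_{N+1}})\vep_{\sigma(i_{N+1})}.
\end{align*}
Setting $\natural'_j := \natural_j\eta_{i_j}$ and $\sharp'_j := \sharp_j\eta_{i_j}$, one reads off that $\Pi'_0$ has precisely the shape demanded by (\ref{eq: general fs form}) with new index sequence $(\sigma(i_1),\ldots,\sigma(i_N))$, which is again a permutation of $I(m|n)$ since $\sigma$ is a bijection.

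The chain identity $\sharp'_k = \natural'_k$ for $2\le k\le N-1$ is inherited directly from $\sharp_k = \natural_k$, because on both sides the \emph{same} scalar $\eta_{i_k}$ is picked up. Items (1) and (2) of Convention~\ref{conv: appendix} hold automatically: consecutive indices stay distinct under the bijection $\sigma$, and each image is a root of $\mathfrak{spo}(2m|2n)$ because $\widehat r_\alpha$ preserves $\Phi$. The argument is therefore just a bookkeeping check of signs and presents no serious obstacle. The only mild point worth flagging is that, unlike the odd-reflection situation treated in Lemma~\ref{lem: str lem for def seq}, no reshuffling of the last two positions is required here: any sign contribution from $\eta_{i_{N+1}}$ is harmlessly absorbed into the free parameter $\sharp'_{N+1}$, so the initial $+$ in front of the final term is preserved without case analysis.
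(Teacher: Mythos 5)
Your proof is correct and follows essentially the same route as the paper: the paper likewise identifies $\widehat r_\alpha$ with a signed permutation of the index set $\{\pm\bar i\}\cup\{\pm j\}$ (a transposition for $\vep_{\widetilde i}-\vep_{\widetilde{i+1}}$, a sign flip on $\bar 1$ for $2\delta_1$, and the signed swap $n-1\leftrightarrow -n$, $n\leftrightarrow -(n-1)$ for $\vep_{n-1}+\vep_n$) and then concludes. Your write-up is in fact more explicit than the paper's about why the chain condition $\sharp_k=\natural_k$ and the shape of the final term survive, but the underlying argument is the same.
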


\begin{proof}  Note that $\widehat r_{\alpha}$ can be identified with a transformation of $J:=\{\pm \bar i\mid i=1,\ldots,m\}\cup\{\pm j\mid j=1,\ldots, n\}$. Precisely, if $\alpha=\vep_{\widetilde{i}}-\vep_{\widetilde{i+1}}$,  then $\widehat r_{\alpha}$ can be identified with the reflection arising from $(\widetilde i \widetilde {(i+1)})$,  which means  exchanging $\widetilde i$ and $\widetilde{(i+1)}$, fixing other indexes.  If $\alpha=2\delta_1$, then $\widehat r_\alpha$ can be identified with the reflection arising from $(1, -1)$ which means exchanging $1$ and $-1$, fixing other indexes. If $\alpha=\vep_{n-1}+\vep_n$, then $\widehat r_{\alpha}$ can be identified with the reflection arising from $(n-1, -n)$ and $(n, -n-1)$,  which means  exchanging $n-1$ and $-n$, $n$ and $-(n-1)$, fixing other indexes.   Hence, the lemma follows.
\end{proof}

Summing up Lemmas \ref{lem: str lem for def seq} and \ref{lem: str lem for def seq ordianary case}, we have a direct corollary.

\begin{corollary}\label{lem: str lem for def seq summing}
Any fundamental system of $\Pi_1$ {which  is
ordered as explained in} \S\ref{sec: order conv after odd} is of the form (\ref{eq: general fs form}).
\end{corollary}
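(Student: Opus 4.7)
The plan is a straightforward induction on the length $l$ of a presentation $\Pi_1=\widehat r_l\cdots\widehat r_1(\Pi)$ guaranteed by Lemma \ref{lem: critical lem}(3), with the two preceding lemmas supplying the inductive step. For the base case $l=0$, I would check by inspection that the standard $\Pi$ of (\ref{eq: FR first type CD})---writing its last root as $\vep_n+\vep_{n-1}$---fits the template (\ref{eq: general fs form}) with $(i_1,\ldots,i_{m+n})=(\bar 1,\ldots,\overline m,1,\ldots,n)$, $i_{m+n+1}=n-1$, and all signs equal to $+1$; the constraint $\sharp_k=\natural_k$ for $k=2,\ldots,m+n-1$ is then immediate.

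For the inductive step, I would assume $\Pi_1':=\widehat r_{l-1}\cdots\widehat r_1(\Pi)$ already has the form (\ref{eq: general fs form}) and analyse $\Pi_1=\widehat r_\theta(\Pi_1')$ according to which super simple reflection is applied, i.e.\ according to $\theta\in\widetilde\Pi=\Pi\cup\{-2\delta_1\}$. Scanning $\widetilde\Pi$, the only odd (isotropic) generator is $\delta_m-\vep_1$; each of $-2\delta_1$, $\delta_i-\delta_{i+1}$, $\vep_k-\vep_{k+1}$, and $\vep_{n-1}+\vep_n$ is even. In the even case Lemma \ref{lem: str lem for def seq ordianary case} applies verbatim. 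In the odd case either $\pm(\delta_m-\vep_1)\in\Pi_1'$ and Lemma \ref{lem: str lem for def seq} applies, or $\widehat r_\theta$ fixes $\Pi_1'$ pointwise (by the convention of \S\ref{sec: fun sys}) and there is nothing to verify.

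I do not expect a significant obstacle, since the substantive casework has been absorbed into the two preceding lemmas. The only bookkeeping I anticipate is confirming that the ordering convention of \S\ref{sec: order conv after odd}---in particular the swap that places $\mp 2\delta_m$ in the final position in Case~1 of (\ref{eq: Pi 1 exp})---is consistently propagated along the induction; but this is built into the statements of both Lemma \ref{lem: str lem for def seq} and Lemma \ref{lem: str lem for def seq ordianary case}, so the corollary will follow by a direct concatenation of the two lemmas along the reduced expression for $w$.
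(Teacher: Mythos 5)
Your induction on the length of an expression $\Pi_1=\widehat r_l\cdots\widehat r_1(\Pi)$ is exactly the argument the paper intends: it states the corollary as a direct consequence of Lemmas \ref{lem: str lem for def seq} and \ref{lem: str lem for def seq ordianary case} via Lemma \ref{lem: critical lem}(3), and your base-case check of the standard $\Pi$ together with the even/odd case split in the inductive step is the correct way to make that explicit. No gap; this matches the paper's approach.
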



\subsubsection{Defining sequences}

\begin{lemma}\label{lem: defining sec for D} Keep the notations and assumptions as in the previous corollary. Set
$$\sfd(\Pi_1):=(\natural_1 i_1,\natural_2 i_2,\ldots,\natural_{m+n}i_{m+n}).$$
 {Then the  sequence $\sfd(\Pi_1)$ is uniquely determined by  $\Pi_1$.} Furthermore,  different fundamental systems $\Pi$ have different associated sequences $\sfd(\Pi)$.
\end{lemma}

\begin{proof} (1) We first look at the uniqueness of $\sfd(\Pi_1)$. By Corollary \ref{lem: str lem for def seq summing}, $\Pi_1$ has the form (\ref{eq: general fs form}). Note that it is impossible that  two roots of the forms $\alpha-\beta$, $\beta-\alpha$ appear  simultaneously in any fundamental  system. So the sequence $\sfd(\Pi_1)$ is unique.

(2) Suppose $\Pi_1$ and $\Pi_1'$ are two fundamental systems with the same defining sequence. Then their first $(N-1)$ roots  are the same (still denote $N=m+n$), and the {{front}} terms of their last roots are the same.
{We claim that  their last roots must be the same.  Consequently, $\Pi_1$ and $\Pi_1'$ are the same. }

Now we prove the claim. Suppose $\Pi_1=\{\gamma_1,\ldots,\gamma_{N-1}, \gamma_N\}$, and $\Pi_1'=\{\gamma_1,\ldots,\gamma_{N-1}, \gamma_N'\}$ with
\begin{align*}
\gamma_N'&=\natural_N\vep_{i_N}+\sharp_{N+1}\vep_{i_{N+1}},\cr
\gamma_N&=\natural_N\vep_{i_N}+\sharp_{N+1}'\vep_{i_{N+1}}'
\end{align*}
where $\natural_N$, $\sharp_{N+1}$ and $\sharp_{N+1}'\in \{\pm 1\}$, and $\vep_{i_{N+1}}, \vep_{i_{N+1}}'\in \{\delta_1,\ldots,\delta_m;\vep_1,\ldots,\vep_n\}$. This implies that  $\gamma_N$ belongs to the positive root system with respect to $\Pi_1$, and $\gamma_N'$ belongs to the positive root system with respect to $\Pi_1'$. 
 Thus, we can express $\gamma_N'=\sum_{i=1}^{N-1}a_i\gamma_i+a\gamma_N$ and
 $\gamma_N=\sum_{i=1}^{N-1}b_i\gamma_i+b\gamma_N'$ with all $a_i, b_i, a,b$ being non-negative numbers. Thus we have
$(1-ab)\gamma_N=\sum_{i=1}^{N-1}(b_i+ba_i)\gamma_i$ which implies that $ab=1$ due to the linearly-independence of $\gamma_i$, $i=1,\ldots,N$. Correspondingly, $a=1=b$ and $a_i=b_i=0$.  Hence $\gamma_N=\gamma_N'$.

The proof is completed.
\end{proof}

{{
Due to the above Lemma, we introduce the defining sequence of type $D(m|n)$.
\begin{definition}\label{def: def seq D}
(Defining sequence of type $D(m|n)$)
Given any Borel  subalgebra $B$ in $\mathcal{B}$ for the Lie superalgebra $\ggg$ of type $D(m|n)$, it  corresponds to some unique fundamental system $\Pi_1$ which  is
ordered as explained in \S\ref{sec: order conv after odd}, thereby is of the form  (\ref{eq: general fs form}). The sequence  $(\natural_1 i_1,\natural_2 i_2,\ldots,\natural_{m+n}i_{m+n})$  is then called the defining sequence of $\Pi_1$ for type $D(m|n)$.

\end{definition}
}}

{
\begin{remark}\label{remark: compatable of both action}
 (1) It is worth mentioning that in \cite{CW}, the conjugacy classification of fundamental systems are discussed through $\varepsilon\delta$-sequences, up to the action of the usual Weyl group $\scrw$. However, it seems not useful in  our arguments.

{ (2)
From the construction of the definition-sequences, it is easily seen that there is a correspondence between the fundamental systems and the defining sequences (the formal formulation of this correspondence will be given in Theorem \ref{prop: presenting seq type D} (before that theorem, we will first give an initial convention for the defining sequence of the standard simple system, and determine the change of a defining sequence corresponding to the action of $\whw$ on the standard simple system (see Proposition \ref{app: gen seq}). This is necessary for us to draw Coxeter graphs), and  its compatibleness with $\whw$.  Actually, an element of $\whw$ uniquely determines a transformation of the set of simple root systems. Correspondingly, the transformation of this element from $\whw$ on the set of  defining sequences  is given simultaneously according to the correspondence mentioned above.

}
\end{remark}



}
\subsection{Type $C(m)$} Let $\ggg=\mathfrak{spo}(2m|2)$. Set $N=m+1$.
Keep  in mind that the order of $\Pi$ is as follows  $$\Pi=\{\vep_1-\delta_1,\delta_1-\delta_2,\ldots,\delta_{m-1}-\delta_m, 2\delta_m\}.$$
 Any {two adjacent} roots of $\Pi$ in this order are a pair of the following form: either $\delta_{i}-\delta_{{i+1}}, \delta_{{i+1}}-\delta_{{i+2}}$ with $ i\in\{ 1,\ldots, m\}$, or $\vep_1-\delta_1, \delta_1-\delta_2$;  or $\delta_{m-1}-\delta_m, 2\delta_m$.

 Any other fundamental system $\Pi_1$ is obtained via certain element $w=\widehat r_{\theta_l}\widehat r_{\theta_{l-1}}\cdots \widehat r_{\theta_1}$ {of the super Weyl group}, i.e. $\Pi_1=w(\Pi)$, the order of
 $\Pi_1$ is decided by the composition of changes arising {from all the $\widehat r_{\theta_k}$'s} ($k=1,\ldots,l$). The change of the order arising from the single super reflection $\widehat r_{\theta_k}$ is  defined below:

 \begin{conven}\label{conven: change of order under odd ref for type C}
 Let
  $\Pi_0=\{\theta_1,\ldots,\theta_k=\theta,\ldots, \theta_{N-1}, \theta_N\}$ be an ordered simple fundamental (root) system of $\ggg$ containing  an isotropic odd root $\theta=\pm(\vep_1-\delta_1)$,  and $\Pi_1=\widehat r_\theta(\Pi_0)$.
 \begin{itemize}
 \item[(Case 1)] if $k=N$ (i.e. $\Theta_N=\theta$) and $\theta_{N-1}=\mp(\delta_1+\vep_1)$,
then the order of  $\Pi_1$ is defined to be
 \begin{align}\label{eq: Pi 1 exp}
\{\theta'_1,\ldots, \theta'_{N-2},-\theta,\mp 2\delta_1\}
\end{align}
where we  exchanged the position of the $(N-1)$th and the $N$th new simple roots such that $\mp 2\delta_m$
appears in the end, and where
we denote for $i\ne N-1, N$,
\begin{align*}
\theta_i'=\begin{cases}
\theta_i+\theta, &\text{ if } (\theta_i,\theta)\ne0;\cr
\theta_i, &\text{   otherwise }.
\end{cases}
\end{align*}
 \item[(Case 2)] Otherwise,  we define  the order to be presented below
 \begin{align*}
\{\theta'_1,\ldots, \ldots, \theta'_{k-1}
, -\theta_{k}, \theta'_{k+1},\ldots, \theta'_N\}
\end{align*}
\noindent where we denote for $i\ne k$
\begin{align*}
\theta_i'=\begin{cases} \theta_i+\theta, &\text{ if } (\theta_i,\theta)\ne0;\cr
\theta_i, &\text{   otherwise}.
\end{cases}
\end{align*}
\end{itemize}

As to an ordinary (even) reflection $\widehat r_\alpha$, it is a well-known fact that for any ordered fundamental root system $\Pi_0=\{\theta_1,\ldots,\theta_N\}$, it becomes under the action of $\widehat r_\alpha$,
$$\Pi'_0=\{\widehat r_\alpha(\theta_1),\ldots,\widehat r_\alpha(\theta_N)\}$$
which is in an order corresponding to the order of $\Pi_0$.
\end{conven}

Similarly, the signs of positive  and negative, say, $\natural,\sharp\in \{\pm1\}$, have to be taken into consideration after transformations of $\Pi$ through the super Weyl groups.  We still keep Assumption \ref{conv: appendix} but putting $n=1$, then we show the assumption holds true.

\begin{lemma}\label{lem: lemma 2.8 for type C}  Suppose $\Pi_0=\{\theta_1,\ldots, \theta_k, \theta_{k+1},\ldots, \theta_{N}\}$ is of the form  (\ref{eq: general fs form}),  and $\Pi_0'=\widehat r_{\theta}(\Pi_0)$ is the transformation of $\Pi_0$ under  a super simple reflection $\widehat r_{\theta}\in \textsf{S}_{\widetilde \Pi}$ (see Proposition \ref{prop: gen system}). Then  $\Pi_0'$  is also  of the form (\ref{eq: general fs form}).
\end{lemma}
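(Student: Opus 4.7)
The plan is a case analysis on $\widehat r_\theta \in \textsf{S}_{\widetilde\Pi}$, following the template of Lemmas \ref{lem: str lem for def seq} and \ref{lem: str lem for def seq ordianary case}. For type $C$ we have $\widetilde\Pi = \Pi = \{\vep_1-\delta_1,\, \delta_1-\delta_2,\, \ldots,\, \delta_{m-1}-\delta_m,\, 2\delta_m\}$, so there is a unique odd super simple reflection $\widehat r_{\vep_1 - \delta_1}$, and the remaining $m$ super simple reflections are even.

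The even reflections I would treat exactly as in Lemma \ref{lem: str lem for def seq ordianary case}: each can be identified with a transform of the index set $I(m|1) = \{\bar 1,\ldots,\bar m, 1\}$ --- $\widehat r_{\delta_i - \delta_{i+1}}$ exchanges $\bar i$ with $\overline{i+1}$, and $\widehat r_{2\delta_m}$ negates every occurrence of $\delta_m$ (equivalently, flipping the signs $\natural_k,\sharp_k$ attached to index $\bar m$). Such index/sign transforms tautologically preserve the structural constraints of form (\ref{eq: general fs form}). For the odd reflection $\widehat r_\theta$ with $\theta = \vep_1 - \delta_1$, if $\pm\theta \notin \Pi_0$ then $\widehat r_\theta$ fixes $\Pi_0$ and there is nothing to do; if $\theta_k = \pm\theta$ for some $k < N$, the neighbours $\theta_{k\pm 1}$ must share the index $1$ or $\bar 1$ with $\theta_k$ by the form, so Lemma \ref{fun lemm} replaces them by $\theta_{k\pm 1}+\theta$ and the resulting sequence still fits form (\ref{eq: general fs form}). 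This step is essentially a verbatim translation of part (1) of the proof of Lemma \ref{lem: str lem for def seq} with the roles of $\delta_m$ and $\vep_1$ swapped.

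The substantive case is $k = N$. If additionally $\theta_{N-1} = \mp(\delta_1 + \vep_1)$, so that Convention \ref{conven: change of order under odd ref for type C} triggers, Lemma \ref{fun lemm} sends $\theta_{N-1}$ to $\theta_{N-1}+\theta = \mp 2\delta_1$ and $\theta_N$ to $-\theta = \pm(\delta_1 - \vep_1)$; after the prescribed swap, I would verify membership in form (\ref{eq: general fs form}) by parsing the new $(N-1)$th root $-\theta$ with initial term $\mp\vep_1$ (index $1$) and rear term $\mp\delta_1$ (index $\bar 1$), so that its rear index $\bar 1$ matches the initial index $\bar 1$ of the last root $\mp 2\delta_1$, while the interior indices and signs at positions $1,\ldots,N-2$ propagate as in the subcase $k<N$. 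The remaining subcase --- $\theta_N = \pm\theta$ with $\theta_{N-1}$ of any other shape --- I would rule out by contradiction, mirroring parts (2.1) and (2.2) of the proof of Lemma \ref{lem: str lem for def seq}: locate the unique simple root carrying $\mp\delta_1$ or $\mp\vep_1$ as its initial term, and force either a linear dependence among the $\theta_i$ or a forbidden sum arising via Lemma \ref{fun lemm} from a non-orthogonal pair. I expect this last exclusion to be the main obstacle, since type $C$ lacks the $\vep_{n-1}+\vep_n$ root that powered the analogous type $D$ arguments; one must instead exploit the absence of $2\vep_1$ from the type $C$ root system (recall $n=1$ and $2\vep_1 \notin \Phi$) to close each alternative.
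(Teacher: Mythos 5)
Your proposal is correct and follows exactly the route the paper intends: the paper's own proof of this lemma is omitted and simply cites the arguments of Lemmas \ref{lem: str lem for def seq} and \ref{lem: str lem for def seq ordianary case}, which is precisely the adaptation you carry out (even reflections as index/sign transforms, the interior odd-reflection case by telescoping, the terminal case split between Convention \ref{conven: change of order under odd ref for type C} and an exclusion argument). You also correctly isolate the one point where the type $C$ verification genuinely differs, namely that $\mp 2\delta_1$ being a root while $\pm 2\vep_1$ is not is what distinguishes the admissible terminal configuration from the ones to be ruled out.
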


\begin{proof} The argument is similar to the proofs of Lemmas \ref{lem: str lem for def seq} and \ref{lem: str lem for def seq ordianary case}.  We omit it.
\end{proof}

\begin{lemma}\label{lem: lemma 2.9 for type C}  Keep the notations and assumptions as in the previous lemma. Set
$$\sfd(\Pi_1):=(\natural_1 i_1,\natural_2 i_2,\ldots,\natural_{m+1}i_{m+1}).$$
Then $\sfd(\Pi_1)$ is unique (called the defining  sequence of $\Pi_1$). Furthermore, different fundamental systems have different defining sequences.
\end{lemma}

\begin{proof} The argument is similar to the proof of Lemma \ref{lem: defining sec for D}, but simplified.  We omit it.
\end{proof}

{{
Due to the above Lemma, we introduce the defining sequence of type $C(m)$.
\begin{definition}\label{def: def seq C}(Definition of defining sequences for $C(m)$)
Given any Borel  subalgebra $B$ in $\mathcal{B}$ for the Lie superalgebra $\ggg$ of type $C(m)$, it  corresponds to some unique fundamental system $\Pi_1$ which  is
ordered as explained in \S\ref{sec: order conv after odd}, thereby is of the form  (\ref{eq: general fs form}) with $n=1$. The sequence  $(\natural_1 i_1,\natural_2 i_2,\ldots,\natural_{m+1}i_{m+1})$  is then called the defining sequence of $\Pi_1$ for type $C(m)$.

\end{definition}
}}

\subsection{$B(m|n)$ with $m+n\geq 4, m,n>0$}

Let $\ggg=\mathfrak{spo}(2m|2n+1)$ with $m>0, n>0$. It admits the standard fundamental system
$$\Pi=\{\delta_1-\delta_2,\cdots,\delta_m-\varepsilon_1,\varepsilon_1-\varepsilon_2,
\cdots,\varepsilon_{n-1}-\varepsilon_n,\varepsilon_n\}.$$
 Any two adjacent roots of $\Pi$ in this order are a pair of the following form: either $\vep_{\widetilde i}-\vep_{\widetilde{i+1}}, \vep_{\widetilde{i+1}}-\vep_{\widetilde{i+2}}$ with $\widetilde i\in I(m|n)=\{\bar 1,\ldots,\overline {m-2}\}\cup\{1,\ldots,n-2\}$, or $\delta_m-\vep_1, \vep_1-\vep_2$;  or $\varepsilon_{n-1}-\vep_n, \vep_n$ or $\delta_{m-1}-\delta_m, \delta_{m-1}-\vep_1$.

 Any other fundamental system $\Pi_1$ is obtained via some element $w=\widehat r_{\theta_l}\widehat r_{\theta_{l-1}}\cdots \widehat r_{\theta_1}$ of the super Weyl groups, i.e. $\Pi_1=w(\Pi)$, the order of
 $\Pi_1$ is decided by the composition of changes arising from all $\widehat r_{\theta_k}$ ($k=1,\ldots,l$). The change of the order arising from the single super reflection $\widehat r_{\theta_k}$ is  defined in \S\ref{sec: order conv after odd general}. In particular, the principle concerning the change of orders arising from the odd reflection is given via  (\ref{eq:  change order after odd ref}) and (\ref{eq:  change order after odd ref-2}).

 We still keep Assumption \ref{conv: appendix}, then we show the assumption {always holds ture.}  Keep in mind that the signs of positive  and negative, say, $\natural,\sharp\in \{\pm1\}$, have to be taken into consideration after transformations {of $\Pi$ through the super Weyl group.}

\begin{lemma}\label{lem: lemma 2.10 for type B}
 Let $\ggg=\mathfrak{spo}(2m|2n+1)$ with $n\geq1$.
 \begin{itemize}
 \item[(1)] Consider $n>1$.  Suppose $\Pi_1=\{\theta_1,\ldots, \theta_k, \theta_{k+1},\ldots, \theta_{N}\}$ is of the  form  (\ref{eq: general fs form})${}_\clubsuit$  which is
     \begin{center}
     \begin{itemize}
     \item[$\clubsuit$] \qquad (\ref{eq: general fs form}) but with modification $\sharp_{m+n+1}=0$,
          \end{itemize}
          \end{center}
          and $\Pi'_1=\widehat r_{\theta}(\Pi_1)$ is  the transformation of $\Pi_0$ under    a super simple reflection $\widehat r_{\theta}\in \textsf{S}_{\widetilde \Pi}$ (see Proposition \ref{prop: gen system} for notations). Then  $\Pi'_1$  is also of the form  (\ref{eq: general fs form})${}_\clubsuit$.
\item[(2)] Consider $n=1$. Suppose $\Pi_1=\{\theta_1,\ldots, \theta_k, \theta_{k+1},\ldots, \theta_{N}\}$ is of the  form  (\ref{eq: general fs form})${}_\spadesuit$ which is
    \begin{itemize}
    \item[$\spadesuit$] \qquad
    (\ref{eq: general fs form}) but with modification $\sharp_{m+n}=0$ while the sequence $(i_1i_2\cdots i_{m+n-1} i_{m+n+1})$ is a permutation of the sequence $(\bar1\overline2\cdots\overline{m}12\cdots n)$,
    \end{itemize}
    and $\Pi'_1=\widehat r_{\theta}(\Pi_1)$ is  the transformation of $\Pi_0$ under   a super simple reflection $\widehat r_{\theta}\in \textsf{S}_{\widetilde \Pi}$. Then  $\Pi'_1$  is also of the form  (\ref{eq: general fs form})${}_\spadesuit$.
\end{itemize}
\end{lemma}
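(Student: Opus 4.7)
The plan is to adapt the proofs of Lemmas~\ref{lem: str lem for def seq}, \ref{lem: str lem for def seq ordianary case} and \ref{lem: lemma 2.8 for type C} to the Type~$B$ setting, proceeding by case analysis on $\theta \in \widetilde\Pi = \{-2\delta_1\}\cup\Pi$. Since the standard fundamental system of $\mathfrak{spo}(2m|2n+1)$ contains exactly one isotropic odd simple root, namely $\delta_m - \vep_1$, the analysis splits cleanly into the even simple reflections and the single odd one.

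For the ordinary (even) reflections, I would follow the template of Lemma~\ref{lem: str lem for def seq ordianary case}: each $\widehat r_\alpha$ with $\alpha \in \widetilde\Pi\setminus\{\delta_m-\vep_1\}$ is identified with a signed transposition on the index set $\{\pm\bar i\}\cup\{\pm j\}$. Namely, $\widehat r_{\delta_i-\delta_{i+1}}$ swaps $\bar i\leftrightarrow\overline{i+1}$; $\widehat r_{\vep_j-\vep_{j+1}}$ swaps $j\leftrightarrow j+1$; $\widehat r_{-2\delta_1}$ swaps $\bar 1\leftrightarrow-\bar 1$; and the short-root reflection $\widehat r_{\vep_n}$ in part~(1) swaps $n\leftrightarrow -n$. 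Each is a bijection on abstract index symbols, so it preserves the chain structure of (\ref{eq: general fs form}), the short-versus-long designation at every position, and (for $\spadesuit$) the permutation condition on the indices $(i_1,\ldots,i_{m+n-1},i_{m+n+1})$. Hence both forms $\clubsuit$ and $\spadesuit$ are stable under these reflections.

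The odd reflection $\widehat r_{\delta_m-\vep_1}$ acts trivially unless $\theta := \pm(\delta_m-\vep_1)$ equals some $\theta_k$ of $\Pi_0$. Because $\theta$ has two nonzero terms, it cannot occupy a short-root slot: this rules out $k = m+n$ in form $\clubsuit$ and $k = m+n-1$ in form $\spadesuit$. For every other $k$ the analysis of Lemma~\ref{lem: str lem for def seq} applies verbatim: the only simple roots of $\Pi_0$ pairing nontrivially with $\theta$ are the neighbors $\theta_{k\pm 1}$ (forced by the chain structure) together with the short root, when and only when its index is $\bar m$ or $1$. The local updates $\natural\vep_i-\delta_m\mapsto\natural\vep_i-\vep_1$ and $\vep_1-\sharp\vep_j\mapsto\delta_m-\sharp\vep_j$ (with symmetric variants for the opposite sign of $\theta$) effect a global transposition of the index symbols $\bar m$ and $1$ along the chain, and when the short root is touched its update $\pm\vep_1\mapsto\pm\delta_m$ (or vice versa) keeps it short in the same slot, so the chain structure is preserved in both $\clubsuit$ and $\spadesuit$.

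The principal obstacle I anticipate is the terminal-position edge case, namely $k=m+n$ in $\spadesuit$ (and the analogous $k=m+n-1$ in $\clubsuit$), where one must rule out configurations that could spawn a vector outside $\Phi$ or require the reordering convention of Case~1 in \S\ref{sec: order conv after odd}. I expect to dispose of these by contradiction as in Case~2 of Lemma~\ref{lem: str lem for def seq}: if $\theta_k=\pm(\delta_m-\vep_1)$ sat at such a terminal slot with a conflicting neighboring sign, then Lemma~\ref{fun lemm} combined with the nonvanishing pairing would force a vector such as $2\delta_m-\vep_1$ or $\delta_m-2\vep_1$ into the positive system, contradicting $\Phi(\mathfrak{spo}(2m|2n+1))$. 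Crucially, Type~$B$ never produces the long root $\pm 2\delta_m$ from such a terminal odd reflection (the absorbing slot is always a short root, not a long one), so the Type-$D$-specific reordering of Case~1 in \S\ref{sec: order conv after odd} never needs to be invoked here, simplifying the bookkeeping and closing the argument for both parts of the lemma.
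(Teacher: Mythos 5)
Your proposal follows the paper's own route: the even simple reflections are disposed of exactly as in Lemma \ref{lem: str lem for def seq ordianary case}, by viewing them as signed permutations of the index symbols, and the odd reflection is handled by the local-update analysis of Lemma \ref{lem: str lem for def seq}, together with the observation (implicit in the paper's proof as well) that the type-$D$ reordering of \S\ref{sec: order conv after odd} is never triggered in type $B$ because the terminal slot is a single-term root rather than a pair $\mp(\delta_m+\vep_1),\pm(\vep_1-\delta_m)$ producing $\mp2\delta_m$. The one point you should repair is the case division for $n=1$: you declare $k=m+n-1$ impossible for the form $\spadesuit$ and instead treat $k=m+n$ as the delicate terminal case, whereas the standard fundamental system $\{\delta_1-\delta_2,\ldots,\delta_m-\vep_1,\vep_1\}$ of $\mathfrak{spo}(2m|3)$ has the isotropic root precisely at position $N-1$, adjacent to a single-term last root at position $N$ — this is exactly the configuration the paper isolates as its Case 2, and it is the one that carries the content (the terminal update $\pm\vep_1\mapsto\pm\delta_m$ or $\pm\delta_m\mapsto\pm\vep_1$). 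The analysis you already give for the $\clubsuit$ edge case at $k=N-1$ transfers verbatim, so nothing mathematically essential is missing from your argument, but as written your $\spadesuit$ branch would skip the only nontrivial terminal configuration; the confusion traces back to the paper's literal definition of $\spadesuit$ via $\sharp_{m+n}=0$, which is inconsistent with how the paper's own proof and Lemma \ref{lem: lemma 2.11 for type B} actually use that form, and you should state explicitly which position carries the single-term root before running the case analysis.
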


\begin{remark}
Before giving the proof, we make some necessary explanations on the forms (\ref{eq: general fs form})${}_\clubsuit$ and   (\ref{eq: general fs form})${}_\spadesuit$. Recall that $\ggg=\mathfrak{spo}(2m|3)$ admits the standard fundamental system
$$\Pi=\{\delta_1-\delta_2,\cdots,\delta_m-\varepsilon_1,
\varepsilon_1\}.$$
By the simple ordinary (even) reflection $\widehat r_{\vep_1}$, $\Pi$ becomes $\Pi_1=\{\delta_1-\delta_2,\cdots, \delta_m, -\vep_1\}$ which is of the form (\ref{eq: general fs form})${}_\spadesuit$. In comparison,   $\ggg=\mathfrak{spo}(2m|2n+1)$ with $n>1$ admits the standard fundamental system
$$\Pi=\{\delta_1-\delta_2,\cdots,\delta_m-\varepsilon_1,\vep_1-\vep_2,\ldots,
\varepsilon_{n-1}-\vep_n, \vep_n\}.$$
Under a simple super reflection $\widehat r_{\theta}$ for $\theta\in \widetilde\Pi$, $\Pi$ becomes $\Pi_1$ which is of the form  (\ref{eq: general fs form})${}_\clubsuit$.
\end{remark}

\begin{proof} If $\widehat r_\theta$ is an ordinary (even) reflection, then the proof can be carried out  by the same arguments as in the proof of Lemma \ref{lem: str lem for def seq ordianary case}.  Now suppose $\widehat r_\theta$ is the odd reflection.  By definition, {if both elements}   $\natural(\delta_m-\vep_1)$ and $\natural(\vep_1-\delta_m)$ do not appear in $\Pi_1$, then $\widehat r_{\theta}$ keeps $\Pi_1$ fixed pointwise. In this case, there is nothing  to do. In the following, we suppose  $\theta_k=\pm\theta$, i.e. $\theta_k$ is that odd isotropic simple root.

We first consider $\ggg=\mathfrak{osp}(2m|3)$, i.e. $n=1$.
Still set $N=m+n=m+1$. By the assumption, $k<N$.
We divide our arguments into different cases.

(Case 1) If $\theta_k=\natural(\delta_m-\vep_1)$, or $\natural(\vep_1-\delta_m)$ with $k<N-1$, then $\theta_{k+1}=\natural\vep_1-\sharp_{k+2}\vep_{i_{k+2}}$ or $\natural\delta_m-\sharp_{k+2}\vep_{i_{k+2}}$ respectively. For example, $\theta_k=\delta_m-\vep_1$, by the assumption of $\Pi_1$ it is easily deduced that $\Pi_1$ must be of the form:
$$(\cdots, \natural_{k-1}\vep_{i_{k-1}}-\delta_m,\delta_m-\vep_1, \vep_1-\sharp_{k+2}\vep_{i_{k+2}},\cdots),$$
and by the transformation of $\widehat r_{\delta_m-\vep_1}$, $\Pi_1$ becomes
$$\widehat r_{\delta_m-\vep_1}(\Pi_1)= (\cdots, \natural_{k-1}\vep_{i_{k-1}}-\vep_1, \vep_1-\delta_m, \delta_m-\sharp_{k+2}\vep_{i_{k+2}},\cdots) $$
which is still of the form (\ref{eq: general fs form})${}_\spadesuit$.  As to the situation $\theta_k=-(\delta_m-\vep_1)$ or $\natural(\vep_1-\delta_m)$ with $k<N (=m+n)$, by the same arguments it is still shown that $\widehat r_{\delta_m-\vep_1}(\Pi_1)$ is of the form (\ref{eq: general fs form})${}_\spadesuit$.

(Case 2) The remaining case is  $\theta_k=\natural(\delta_m-\vep_1)$, or $\natural(\vep_1-\delta_m)$ with $k=N-1$,  for which  we  make a special treatment.   For example,  $\theta_{N-1}=\pm(\delta_m-\vep_1)$. Then  the assumption on $\Pi_1$ yields that the last-but-two root of $\Pi_1$ by its order is of the form
 $$\natural_{N-1}\vep_{i_{N-1}}- (\pm\delta_m),$$
 and the last root of $\Pi_1$ by its order  is $\pm \vep_{\widetilde i}$ with $\widetilde i\in I(m|n)$. It's easily seen that $\widehat r_{\delta_m-\vep_1}(\Pi_1)$ is of the form (\ref{eq: general fs form})${}_\spadesuit$.

As to (1) with $\ggg=\mathfrak{spo}(2m|2n+1)$ ($n>1$), the same arguments as above yield  that  the odd reflection $\widehat r_{\delta_m-\vep_1}$ transforms a fundamental system of the form (\ref{eq: general fs form})${}_\clubsuit$ into another fundamental system of the  form (\ref{eq: general fs form})${}_\clubsuit$.
\end{proof}

\begin{lemma}\label{lem: lemma 2.11 for type B}  Keep the notations and assumptions as in the previous lemma. Set
\begin{align*}
\sfd(\Pi_1):=\begin{cases} (\natural_1 i_1,\natural_2 i_2,\ldots, \natural_{m+n-1}i_{m+n-1}, \natural_{m+n}i_{m+n}), &\text{ if }n>1;\cr
(\natural_1 i_1,\natural_2 i_2,\ldots, \natural_{m+n-1}i_{m+n-1}, \sharp_{m+n+1}i_{m+n+1}), &\text{ if }n=1.
\end{cases}
\end{align*}
Then $\sfd(\Pi_1)$ is unique (called the defining sequence of $\Pi_1$). Furthermore, different fundamental systems have different defining sequences.
\end{lemma}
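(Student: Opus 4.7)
The approach I would take is to mimic the two-part template of the proofs of Lemma \ref{lem: defining sec for D} (type $D$) and Lemma \ref{lem: lemma 2.9 for type C} (type $C$), adapting for the mild notational bifurcation between the shapes (\ref{eq: general fs form})${}_\clubsuit$ (when $n>1$) and (\ref{eq: general fs form})${}_\spadesuit$ (when $n=1$). First I would invoke Lemma \ref{lem: lemma 2.10 for type B} to guarantee that $\Pi_1$ is already presented in one of these two normal forms, so that the raw data $(\natural_k,\sharp_k,i_k)$ is available and $\sfd(\Pi_1)$ makes sense as a sequence of signed indices.

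For the uniqueness of $\sfd(\Pi_1)$, the plan is to show that the decomposition of $\Pi_1$ into its normal form is determined by $\Pi_1$ alone. The chain condition $\sharp_k=\natural_k$ ($2\le k\le m+n-1$) ties together the ``initial'' and ``rear'' terms of consecutive simple roots: the initial term of $\gamma_1$ is the unique basis vector appearing in $\gamma_1$ but not in $\gamma_2$, and by induction each subsequent initial term is forced by the rear term of its predecessor. The only remaining ambiguity---rereading a single root $\natural\vep_a-\sharp\vep_b$ with its initial and rear terms swapped---is excluded by the standard fact that a fundamental system contains no pair $\{\alpha,-\alpha\}$. Hence the data $(\natural_k,i_k)$ are pinned down by $\Pi_1$ for $k=1,\ldots,m+n-1$, and the last root $\gamma_{m+n}$ then contributes the remaining signed-index entry of $\sfd(\Pi_1)$.

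For injectivity, suppose $\Pi_1=\{\gamma_1,\ldots,\gamma_{m+n-1},\gamma_{m+n}\}$ and $\Pi_1'=\{\gamma_1,\ldots,\gamma_{m+n-1},\gamma_{m+n}'\}$ share a defining sequence; by the uniqueness step their first $m+n-1$ simple roots coincide and the initial term of the last root is also common. When $n>1$ the last root is the short root $\natural_{m+n}\vep_{i_{m+n}}$, determined literally by the sequence, so $\gamma_{m+n}=\gamma_{m+n}'$ immediately. When $n=1$ the last root has a chain-determined initial piece together with an additional signed index $\sharp_{m+n+1}i_{m+n+1}$, and I would reproduce the positive-cone argument from Lemma \ref{lem: defining sec for D}: each of $\gamma_{m+n},\gamma_{m+n}'$ belongs to the positive root system of the other, so one may write $\gamma_{m+n}'=\sum_{i<m+n}a_i\gamma_i+a\gamma_{m+n}$ and $\gamma_{m+n}=\sum_{i<m+n}b_i\gamma_i+b\gamma_{m+n}'$ with nonnegative coefficients, and linear independence of $\gamma_1,\ldots,\gamma_{m+n}$ forces $ab=1$, hence $a=b=1$ and $\gamma_{m+n}=\gamma_{m+n}'$.

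The main obstacle I expect is the combinatorial bookkeeping in the $\spadesuit$ case: the index $i_{m+n}$ is absent from the permutation $(i_1,\ldots,i_{m+n-1},i_{m+n+1})$, while $\sfd(\Pi_1)$ ends with $\sharp_{m+n+1}i_{m+n+1}$ rather than $\natural_{m+n}i_{m+n}$. One has to verify that $\natural_{m+n}\vep_{i_{m+n}}$ is nevertheless intrinsic to $\gamma_{m+n}$---it is the unique basis vector completing $\{\vep_{i_1},\ldots,\vep_{i_{m+n-1}},\vep_{i_{m+n+1}}\}$ to a permutation of $\{\bar 1,\ldots,\overline m,1,\ldots,n\}$---so that the remaining summand of $\gamma_{m+n}$ unambiguously supplies the final entry $\sharp_{m+n+1}i_{m+n+1}$ of $\sfd(\Pi_1)$. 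Once this combinatorial check is in place, the rest of the argument is a direct transcription of the $D$-type proof.
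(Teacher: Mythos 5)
Your overall template (normal form from Lemma \ref{lem: lemma 2.10 for type B}, then uniqueness, then injectivity via the positive-cone argument of Lemma \ref{lem: defining sec for D}) matches the paper, and your treatment of the case $n>1$ agrees with it. But in the case $n=1$ you have misplaced the ambiguity, and this creates a genuine gap. For $n=1$ the defining sequence is $(\natural_1 i_1,\ldots,\natural_{N-1}i_{N-1},\sharp_{N+1}i_{N+1})$ with $N=m+1$: the datum it omits is the rear term $\sharp_{N}\vep_{i_N}$ of the \emph{second-to-last} root $\gamma_{N-1}=\natural_{N-1}\vep_{i_{N-1}}-\sharp_N\vep_{i_N}$, while the last root is pinned down by the recorded entry $\sharp_{N+1}i_{N+1}$. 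Your injectivity step begins by asserting that ``by the uniqueness step their first $m+n-1$ simple roots coincide'' --- but that coincidence (specifically, the recovery of the rear term of $\gamma_{N-1}$) is precisely the nontrivial claim; it is the point at which the paper deploys the positive-cone argument, applied to $\gamma_{N-1}$ versus $\gamma_{N-1}'$, after first observing that the first $N-2$ roots, the initial term of the $(N-1)$th root, and the last root all agree. You instead aim the positive-cone argument at $\gamma_{N}$, which is already determined by the sequence, so your argument proves the wrong coincidence and silently assumes the right one.

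The ``combinatorial check'' you propose to patch this does not work as stated: by the definition of the form (\ref{eq: general fs form})${}_\spadesuit$, the tuple $(i_1,\ldots,i_{m+n-1},i_{m+n+1})$ is \emph{already} a permutation of the full index set $I(m|n)$ (which has exactly $m+n$ elements), so there is no ``unique basis vector completing'' it --- the index $i_{m+n}$ necessarily repeats one of the listed indices, and which one it repeats (and with which sign $\sharp_N$) is exactly the unrecorded datum. The gap is repairable: keep your uniqueness paragraph, but in the injectivity step for $n=1$ redirect the positive-cone argument at the pair $\gamma_{N-1},\gamma_{N-1}'$ (two roots sharing the initial term $\natural_{N-1}\vep_{i_{N-1}}$, each positive with respect to the other's fundamental system), exactly as in the paper.
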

\begin{proof} We first look at the uniqueness of $\sfd(\Pi_1)$. By Lemma \ref{lem: lemma 2.10 for type B}, $\Pi_1$ has the form (\ref{eq: general fs form})${}_\clubsuit$ or (\ref{eq: general fs form})${}_\spadesuit$, depending on whether $n>1$ or $n=1$.  This lemma also yields that sequence $\sfd(\Pi_1)$ is unique.

 Suppose $\Pi_1$ and $\Pi_1'$ are two fundamental systems with the same defining sequence.

 (1) Suppose $\ggg=\mathfrak{spo}(2m|2n+1)$ with $n>1$. In this case, by the form (\ref{eq: general fs form})${}_\clubsuit$ it is easily seen  that $\Pi_1$ and $\Pi_1'$ are uniquely determined by their defining sequences. Hence, $\Pi_1$ and $\Pi_1'$ are the same.

(2) Suppose $\ggg=\mathfrak{spo}(2m|3)$. In this case, from the form (\ref{eq: general fs form})${}_\spadesuit$ we can deduce that  $\Pi_1$ and $\Pi_1'$ admit the same first $(N-2)$th roots (say $\gamma_1,\ldots,\gamma_{N-2}$ with $N=m+1$),  the same front terms of the last but one roots,   and   the same last roots. So we can suppose
\begin{align*}
\Pi_1&=\{\gamma_1,\ldots,\gamma_{N-2}, \gamma_{N-1}, \gamma_n\}\cr
\Pi_1'&=\{\gamma_1,\ldots,\gamma_{N-2}, \gamma_{N-1}', \gamma_n\}\cr
\end{align*}
with $\gamma_{N-1}=\natural_{N-1}\vep_{i_{N-1}}-\sharp_{N}\vep_{i_N}$,
and  $\gamma_{N-1}'=\natural_{N-1}\vep_{i_{N-1}}-\sharp_{N}'\vep_{i_N}'$. We only need to show that $\sharp_{N}\vep_{i_N}=\sharp_{N}'\vep_{i_N}'$. The same arguments as in the proof of Lemma \ref{lem: defining sec for D} ensures the desired equality.

The proof is completed.
\end{proof}

{{
Due to the above Lemma, we introduce the defining sequence of type $B(m|n)$.
\begin{definition}\label{def: def seq B} (Definition of defining sequences for $B(m|n)$)
Given any Borel  subalgebra $B$ in $\mathcal{B}$ for the Lie superalgebra $\ggg$ of type $B(m|n)$, it  corresponds to some unique fundamental system $\Pi_1$ as presented in Lemma \ref{lem: lemma 2.10 for type B}. The sequence  
\begin{align*}
\sfd(\Pi_1):=\begin{cases} (\natural_1 i_1,\natural_2 i_2,\ldots, \natural_{m+n-1}i_{m+n-1}, \natural_{m+n}i_{m+n}), &\text{ if }n>1;\cr
(\natural_1 i_1,\natural_2 i_2,\ldots, \natural_{m+n-1}i_{m+n-1}, \sharp_{m+n+1}i_{m+n+1}), &\text{ if }n=1.
\end{cases}
\end{align*}
 is then called the defining sequence of $\Pi_1$ for type $B(m|n)$.

\end{definition}
}}

\section{Coxeter graphs for $D(m|n)$ and for $C(m)$}\label{se:5}


{
According to the definition of super Weyl groups,  any two fundamental systems for $\ggg$ can be transformed into each other via the action of  $\whw$.
In order to draw the Coxeter graph of the Coxeter system $({\mathcal{C}}, S)$  associated with $\whw$,  we need to precisely compute the orders of any product $ss'$ for $s,s'\in S$ via the defining sequences corresponding to the standard fundamental system. For this, as mentioned in Remark \ref{remark: compatable of both action}(3),  we need to precisely present the change of initial datum corresponding to the standard fundamental system under the action of the super Weyl group. We will do that in this and next sections. We start by dealing with $\ggg$ of type $D(m|n)$ in this section (from \S\ref{sec: present seq} to \S\ref{sec: type D for CoGr}), tnen deal with $\ggg$ of type $C(m)$  in \S\ref{sec: type C for CoGr}.

}

\subsection{Defining sequences (revisit)}\label{sec: present seq}
Based on the defining sequences introduced in \S\ref{sec: corres type D}, and their correspondence with fundamental systems, we demonstrate  that the correspondence is actually equivariant under the action of super simple reflections in the following.

Recall that the standard fundamental system (\ref{eq: FR first type CD}) which is listed in the following  order
$$\Pi=\{\delta_1-\delta_2,\ldots,\delta_m-\varepsilon_1,
\varepsilon_1-\varepsilon_2,\ldots,
\varepsilon_{n-1}-\vep_n, \vep_n+\vep_{n-1}\}$$
corresponds to the standard defining sequence
    $$(\bar 1,\bar 2, \ldots, \overline  m, 1,\ldots, n).$$
Let us first present the change rule under super simple reflections via examples.

\begin{example} Suppose $n\geq2$.

(1) Defining  sequences under even simple reflection $\widehat r_{\overline  1}:=\widehat r_{2\delta_1}$:
\begin{align*}
\Pi=\{\delta_1-\delta_2, \delta_2-\delta_3,\cdots \cdots\}\quad &{\overset{\widehat r_{\overline 1}}\longrightarrow}\quad\Pi_1= \{-\delta_1-\delta_2,\delta_2-\delta_3,\cdots \cdots\}\cr
(\overline 1,\overline 2, \cdots\cdots,n)\quad \quad                  &\longrightarrow \qquad \quad(-\overline 1,\overline 2,\cdots\cdots,n).
\end{align*}

(2) Defining sequences under even simple reflection $\widehat r_{\overline  {12}}:=\widehat r_{\delta_1-\delta_2}$:
\begin{align*}
\Pi_1= \{-\delta_1-\delta_2,\delta_2-\delta_3,\cdots \cdots\}
\quad &{\overset{\widehat r_{\overline{12}}}{\longrightarrow}}\quad\Pi_2= \{-\delta_2-\delta_1,\delta_1-\delta_3,\cdots \cdots\}\cr
(-\overline 1,\overline 2, \overline 3\cdots\cdots,n)\quad \quad                  &\longrightarrow \qquad \quad(-\overline 2,\overline 1,\overline 3,\cdots\cdots,n)
\end{align*}

(3) defining sequences under even simple reflection $\widehat r_{\overline  {23}}:=\widehat r_{\delta_2-\delta_3}$:
\begin{align*}
\Pi_2= \{-\delta_2-\delta_1,\delta_1-\delta_3,\delta_3-\delta_4\cdots \cdots\}
\quad &{\overset{\widehat r_{\overline{23}}}{\longrightarrow}}\quad\Pi_3= \{-\delta_3-\delta_1,\delta_1-\delta_2,\delta_2-\delta_4\cdots \cdots\}\cr
(-\overline 2,\overline 1, \overline 3,\cdots\cdots,n)\quad \quad                  &\longrightarrow \qquad \quad(-\overline 3,\overline 1,\overline 2,\cdots\cdots,n)
\end{align*}

 (4) defining sequences under odd simple reflection $\widehat r_{\overline m 1}:=\widehat r_{\delta_m-\varepsilon_1}$:
\begin{align*}
\Pi=\{\cdots, \delta_{m-1}-\delta_m, \delta_m-\varepsilon_1,\varepsilon_1-\varepsilon_2,\cdots\}\quad &{\overset{\widehat r_{\overline m 1}}\longrightarrow}\quad\Pi_4= \{\cdots, \delta_{m-1}-\varepsilon_1, \varepsilon_1-\delta_m, \delta_m-\varepsilon_2,\cdots\}\cr
(\cdots\cdots,\overline{m-1},\overline m, 1,2,\cdots\cdots)\quad \quad                  &\longrightarrow  \quad(\cdots\cdots,\overline{m-1},1,\overline m, 2,\cdots\cdots)
\end{align*}

(5) defining sequences under even simple reflection $\widehat r_{12}:=\widehat r_{\varepsilon_1-\varepsilon_2}$:
\begin{align*}
\Pi=\{\cdots, \delta_m-\varepsilon_1,\varepsilon_1-\varepsilon_2,\varepsilon_2-\varepsilon_3,\cdots\}\quad &{\overset{\widehat r_{12}}\longrightarrow}\quad\Pi_5= \{\cdots, \delta_m-\varepsilon_2, \varepsilon_2-\varepsilon_1,\varepsilon_1-\varepsilon_3,\cdots\}\cr
(\cdots\cdots,\overline m, 1,2,3,\cdots\cdots)\quad \quad                  &\longrightarrow  \quad(\cdots\cdots,\overline{m},2,1,3,\cdots\cdots)
\end{align*}

(6)  defining sequences under odd simple reflection ${\widehat r_{n-1,n}}:=\widehat r_{\varepsilon_{n-1}-\varepsilon_{n}}$:
\begin{align*}
\Pi=\{\cdots, \varepsilon_{n-2}-\varepsilon_{n-1},\varepsilon_{n-1}-\varepsilon_{n},
\varepsilon_{n}+\varepsilon_{n-1}\}\quad &{\overset{{\widehat r_{n-1,n}}}\longrightarrow}\quad\Pi_6= \{\cdots, \varepsilon_{n-2}-\varepsilon_{n}, \varepsilon_{n}- \varepsilon_{n-1},
\varepsilon_{n-1}+\varepsilon_{n}\}\cr
(\cdots\cdots, n-2,n-1, n)\quad \quad                  &\longrightarrow  \quad(\cdots\cdots,n-2,n, n-1)
\end{align*}

(7) defining sequences under odd simple reflection ${\widehat r_{n-1,n}}':=\widehat r_{\varepsilon_{n-1}+\varepsilon_{n}}$:
\begin{align*}
\Pi=\{\cdots, \varepsilon_{n-2}-\varepsilon_{n-1},\varepsilon_{n-1}-\varepsilon_{n},
\varepsilon_{n}+\varepsilon_{n-1}\}\quad &{\overset{{\widehat r_{n-1,n}}'}\longrightarrow}\quad\Pi_7= \{\cdots, \varepsilon_{n-2}+\varepsilon_{n}, -\varepsilon_{n}+ \varepsilon_{n-1},
-\varepsilon_{n-1}-\varepsilon_{n}\}\cr
(\cdots\cdots, n-2,n-1, n)\quad \quad                  &\longrightarrow  \quad(\cdots\cdots,n-2,-n, -(n-1))
\end{align*}
\end{example}

\subsubsection{Defining-sequence Theorem}
Note that  any fundamental system comes from the standard fundamental system  (\ref{eq: FR first type CD}) by  a composition of some super simple reflections in $\textsf{S}_{\widetilde \Pi}$ (see Lemma \ref{lem: critical lem}(3) and Proposition \ref{prop: gen system}). Convention {and} Assumption \ref{conv: appendix}, Lemmas \ref{lem: str lem for def seq}  and  \ref{lem: str lem for def seq ordianary case} can be summarized and reformulated  in the formation of  defining sequences, which inductively decide the defining  sequence for any given fundamental systems.

\begin{proposition}\label{app: gen seq} Suppose $n\geq 2$.
 Let the standard fundamental system (\ref{eq: FR first type CD}) correspond to the sequence
    $(\bar 1,\bar 2, \ldots \overline  m, 1,\ldots, n)$. Suppose a fundamental system $\Pi_1$ comes from another fundamental system $\Pi_0$ by a simple reflection $\widehat r_\theta$ for $\theta\in \widetilde\Pi$. The following statements hold.
    \begin{itemize}
    \item[(1)] If $\theta=\delta_m-\varepsilon_1$ and consequently the defining sequence of $\Pi_0$ can be written as

\begin{itemize}
        \item[(Case 1.1)] when $\pm 1$ is in the end of the defining sequence of $\Pi_0$, in the meanwhile the sequence is of the form
        $$(\cdots\cdots,\mp \overline{m},\pm 1),$$
        which means that $\Pi_0$ looks like $(\cdots, \mp(\delta_m+\varepsilon_1), \pm(\vep_1-\delta_m))$,
        then  $\Pi_1$ corresponds to the sequence $$(\cdots\cdots,\mp 1,\mp\overline{m}).$$
         This means that the defining sequence of $\Pi_1$ is obtained by exchanging the positions of $\mp\overline m$ and $\pm1$ of $\Pi_0$, and the leftward $\pm 1$ changes the sign.

      \item[(Case 1.2)] {Otherwise, that is to say} if the defining sequence of $\Pi_0$ is of the form
        $$(\cdots ,\pm \overline{m},\pm 1, \cdots) \text{ or }(\cdots, \pm 1,\pm \overline m, \cdots)$$
        (equivalently,  $\Pi_0$ looks like $(\cdots, \pm(\delta_m-\varepsilon_1), \pm(\varepsilon_1+\heartsuit), \cdots)$ or  $(\cdots, \pm(\varepsilon_1-\delta_m), \pm(\delta_m+\diamondsuit), \cdots)$ with $\heartsuit,\diamondsuit\in \{\pm\vep_i\mid i\in I(m|n)\}$ such that $\varepsilon_1+\heartsuit$ and $\delta_m+\diamondsuit$ are roots),
        then $\Pi_1$ corresponds to the sequence $$(\cdots ,\pm 1,\pm \overline{m},\cdots) \text{ or }(\cdots, \pm \overline m, \pm1,\cdots),$$
         respectively.  This means that we only exchange the positions of $\pm\overline m$ and $\pm1$, all others do not change.
        \end{itemize}

    \item[(2)] Suppose  $\theta=\varepsilon_{\widetilde{i}}-\varepsilon_{\widetilde{i+1}}$ for $\widetilde{i}\in \{\bar 1,\ldots, \overline {m-1}\}$ or $\widetilde{i}\in \{1,\ldots,n-1\}$
    and consequently the defining sequence of $\Pi_0$ can be written as
\begin{itemize}
\item[(Case 2.1)]
       $(\cdots ,\pm\widetilde{i},\cdots,\pm \widetilde{(i+1)}, \cdots) \text{ or }(\cdots, \pm\widetilde{(i+1)},\cdots,\pm\widetilde{i}, \cdots)$; or
  \item[(Case 2.2)] $(\cdots ,\pm\widetilde{i},\cdots,\mp \widetilde{(i+1)}, \cdots) \text{ or }(\cdots, \mp\widetilde{(i+1)},\cdots,\pm\widetilde{i}, \cdots)$.
\end{itemize}

        Then in (Case 2.1), $\Pi_1$ corresponds to the sequence $$(\cdots ,\pm \widetilde{(i+1)},\cdots,\pm \widetilde{i},\cdots) \text{ or }(\cdots, \pm\widetilde{i},\cdots,\pm\widetilde{(i+1)}, \cdots),$$
         respectively,  which means that we only exchange the positions of $\pm\widetilde{i}$ and $\pm\widetilde{(i+1)}$, all others do not change.
          In (Case 2.2),  $\Pi_1$ corresponds to the sequence $$(\cdots ,\pm \widetilde{(i+1)},\cdots,\mp \widetilde{i},\cdots) \text{ or }(\cdots, \mp\widetilde{i},\cdots,\pm\widetilde{(i+1)}, \cdots),$$
         respectively,  which means that we exchange the positions of $\pm\widetilde{i}$ and $\mp\widetilde{(i+1)}$, and simultaneously change both signs, all others do not change.

\item[(3)] If $\theta=\varepsilon_{n-1}+\varepsilon_n$,  and consequently the defining sequence of $\Pi_0$ can be written as $(\cdots,\pm (n-1),\pm n,\cdots)$ or $(\cdots,\pm n,\pm(n-1),\cdots)$, then $\Pi_1$ corresponds to the sequence $(\cdots,\mp n,\mp(n-1),\cdots)$ or $(\cdots,\mp (n-1),\mp n,\cdots)$, respectively.  This means that we change the positions of $\pm(n-1)$ and $\pm n$, simultaneously change both signs,   all others do not change.

    \item[(4)] If $\widehat r_\theta= \widehat r_{-2\delta_1}= \widehat r_{2\delta_1}$ and consequently the defining sequence of $\Pi_0$ can be written as $(\cdots,\pm \bar1,\cdots)$, then   $\Pi_1$ corresponds to the sequence $(\cdots,\mp \bar 1,\cdots)$.  This means that we only change the sign of $\pm\bar 1$,   all others do not change.
        \end{itemize}
\end{proposition}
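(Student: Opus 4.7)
The plan is to argue case by case, using the structural form \eqref{eq: general fs form} from Convention \ref{conv: appendix} (which is available by Corollary \ref{lem: str lem for def seq summing}), together with the odd reflection formula of Lemma \ref{fun lemm} and the ordering conventions in \S\ref{sec: order conv after odd}. In each case one starts from a fundamental system $\Pi_0$ of the form \eqref{eq: general fs form}, applies $\widehat r_\theta$ to produce $\Pi_1=\widehat r_\theta(\Pi_0)$, then reads off the initial terms $\natural_k\vep_{i_k}$ of the resulting ordered list to obtain $\sfd(\Pi_1)$, and finally compares with the rule stated in the proposition. Since the defining sequence is uniquely determined by $\Pi_1$ (Lemma \ref{lem: defining sec for D}), it is enough to verify that both sides point to the same ordered list of initial terms.

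The easy pieces are items (2), (3), (4). For (4) the even reflection $\widehat r_{2\delta_1}$ fixes every coordinate $\vep_{i_k}$ with $i_k\ne \bar1$ and sends $\delta_1\mapsto-\delta_1$, which after comparing with the action on each root in \eqref{eq: general fs form} only alters the sign attached to $\bar1$ in the sequence. For (2) and (3), one invokes Lemma \ref{lem: str lem for def seq ordianary case} to identify $\widehat r_\theta$ with the permutation of the index set $J$ corresponding to $(\widetilde i,\widetilde{i+1})$, respectively $(n{-}1,-n)(n,-(n{-}1))$; applied coordinate-wise to each root of $\Pi_0$, this permutation of initial terms exactly matches the statements in (Case 2.1), (Case 2.2) and (3), the sign flip in (Case 2.2) and in (3) being forced by the fact that the permutation of $J$ sends a positive index to a negative one (or vice versa).

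The substantive case is (1), where $\theta=\delta_m-\epsilon_1$ is isotropic odd. Here I would split along the position $k$ of $\pm\theta$ in $\Pi_0$. When $k<N$ (or $k=N$ without the degenerate configuration), Convention on odd reflections in \S\ref{sec: order conv after odd} tells us that $\Pi_1$ is obtained by replacing $\theta_k$ with $-\theta_k$ and by adding $\theta$ to each $\theta_j$ with $(\theta_j,\theta)\ne0$, keeping the order. By the form \eqref{eq: general fs form}, the only roots with nonzero pairing with $\theta$ are the two neighbours of $\theta_k$, so the net effect on the ordered list of initial terms is exactly to transpose $\pm\overline m$ with $\pm 1$, as in (Case 1.2). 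The genuinely delicate subcase is $k=N$ with the configuration $\theta_{N-1}=\mp(\delta_m+\vep_1)$, which is the reordered Case 1 of \S\ref{sec: order conv after odd}: after the odd reflection one obtains the intermediate list $(\ldots,\natural_{N-2}\vep_{i_{N-2}}\pm\vep_1,-\theta,\mp 2\delta_m)$, and the convention swaps the last two positions to place $\mp 2\delta_m$ at the end. Reading off the initial terms produces $(\ldots,\mp 1,\mp\overline m)$, matching (Case 1.1); the sign flip on $1$ comes from the rear term $\mp\vep_1$ of $\theta_{N-1}$ becoming the initial term of the new $(N-1)$st root after the swap.

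The main obstacle I anticipate is bookkeeping the sign bookkeeping in this last subcase, in particular the interaction between (i) the sign conventions $\natural,\sharp$ in \eqref{eq: general fs form}, (ii) the two possibilities $\theta_N=\delta_m-\vep_1$ vs.\ $\theta_N=-(\delta_m-\vep_1)$, and (iii) the forced pairing $\theta_{N-1}=\mp(\delta_m+\vep_1)$ established in the corollary following Lemma \ref{lem: str lem for def seq}. Once the two mirrored sign choices are checked carefully, everything else follows by direct inspection, and uniqueness of the defining sequence (Lemma \ref{lem: defining sec for D}) closes the argument.
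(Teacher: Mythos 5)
Your proposal is correct and follows essentially the same route as the paper, which presents this proposition without a separate proof, as a summary/reformulation of Convention \ref{conv: appendix}, Lemmas \ref{lem: str lem for def seq} and \ref{lem: str lem for def seq ordianary case}, and the ordering convention of \S\ref{sec: order conv after odd}; your case-by-case reading of initial terms is exactly that reformulation made explicit. The only point to keep in view is that your claim in case (1) that only the two neighbours of $\theta_k$ pair nontrivially with $\theta$ is not automatic from the form \eqref{eq: general fs form} alone (the rear term of the last root could a priori be $\pm\delta_m$ or $\pm\vep_1$), but the exclusion of those configurations is precisely what the contradiction arguments in Lemma \ref{lem: str lem for def seq} provide, so your reliance on that lemma closes the gap.
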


\begin{remark}
  $\Pi_0$ could look like  $(\cdots, \mp(\delta_m+\varepsilon_1), \pm(\vep_1-\delta_m))$ as in (Case 1.1) of the above proposition.  But the adjacent pair $ \mp(\delta_m+\vep_1), \pm(\vep_1-\delta_m)$ could not  occur in middle positions of any ordered fundamental system $\Pi_0$. This is because by Lemmas \ref{lem: str lem for def seq}  and  \ref{lem: str lem for def seq ordianary case}, $\Pi_0$ must be of form
(\ref{eq: general fs form}). If so,  the root after $\pm(\vep_1-\delta_m)$ must have the front term $\pm \delta_m$, contradicting the convention for (\ref{eq: general fs form}). The convention does not  allow the same front term to appear twice in an ordered fundamental system.
\end{remark}

\subsubsection{One-to-one correspondence}
Now we are in a position to introduce a key result which is a reformulation of Lemma \ref{lem: defining sec for D}.
\begin{theorem}\label{prop: presenting seq type D} (Defining-Sequence Theorem) The set of fundamental systems are in a one-to-one correspondence with the set of defining sequences. {Such a one-to-one correspondence is $\widehat W$-equivariant in the sense that  it is compatible with  $\widehat W$-action.}
\end{theorem}

{
\begin{proof} The first part of the theorem follows from Assumption \ref{conv: appendix}, Corollary \ref{lem: str lem for def seq summing} and Proposition \ref{app: gen seq}.  The compatibility of $\sfd$  with the $\widehat W$-action is explained in Remark \ref{remark: compatable of both action}(2).
\end{proof}
}

\begin{remark}\label{rem: super W-equivariant}    The statement of Theorem \ref{prop: presenting seq type D}  is valid too for type $C(m)$ and $B(m;n)$ because of the corresponding defining-sequence construction (see  Theorems \ref{prop: presenting seq type C}, and \ref{prop: presenting seq type B}).
\end{remark}

\subsection{Weyl group for $\mathfrak{spo}(2m|2n)$ ($n\geq 2$) and its Coxeter graph}\label{sec: 3.3}
By definition, the Weyl group of $\mathfrak{spo}(2m|2n)$ with $n\geq2$, is exactly the Weyl group of $\mathfrak{g}_{\bar{0}}=\mathfrak{sp}(2m)\oplus\mathfrak{so}(2n)$ which is isomorphic to $(\mathbb{Z}_{2}^m\rtimes\mathfrak{S}_{m})\times(\mathbb{Z}_{2}^n\rtimes
\mathfrak{S}_{n})$.

Keeping the extended standard fundamental system of $\mathfrak{spo}(2m|2n)$ in mind (see around (\ref{eq: FR first type CD})), we set
\begin{align*}
&\widehat r_{\bar 1}:=\widehat r_{-2\delta_1}=\widehat r_{2\delta_1}, \cr
&\widehat r_{\overline{ i,i+1}}:=\widehat r_{\delta_i-\delta_{i+1}},\cr
&\widehat r_{{ i,i+1}}:=\widehat r_{\varepsilon_j-\varepsilon_{j+1}}
\end{align*}
and
\begin{align*}
&\widehat r_{\bar{i}j} := \widehat r_{\delta_i-\varepsilon_j}, \cr
&r'_{ij} :=\widehat r_{\varepsilon_i+\varepsilon_j}.
\end{align*}
The Coxeter graph of the Weyl group for $\mathfrak{spo}(2m|2n)$ consist {of two disconnected parts} as below (see for example, \cite[Theorem 1.3.3]{GP}):

\begin{center}
	\begin{tikzpicture}
		
		\node[circle,draw,minimum size=8pt,inner sep=0pt,fill=white] (r1) at (-4,1) {};

		\node[circle,draw,minimum size=8pt,inner sep=0pt,fill=white] (r2) at (-3,1) {};
		
		\node[circle,draw,minimum size=8pt,inner sep=0pt,fill=white] (r3) at (-2,1) {};
		
		\node[circle,draw,minimum size=8pt,inner sep=0pt,fill=white] (r4) at (-1,1) {};

		\draw (r1) -- node[above=2pt] {\footnotesize 4} (r2);
		\draw (r2) -- (r3);
		\node (sl)at(-1.5,1){ $\cdots$};
		
		\node[below=3pt] at (r1) {$\widehat r_{\bar 1}$};
		\node[below=3pt] at (r2) {$\widehat r_{\overline{1,2}}$};
		\node[below=3pt] at (r3) {$\widehat r_{\overline{ 2,3}}$};
		\node[below=3pt] at (r4) {$\widehat r_{\overline{m-1,m}}$};

	\end{tikzpicture}
\end{center}

and

\begin{center}
	\begin{tikzpicture}
		
		\node[circle,draw,minimum size=8pt,inner sep=0pt,fill=white] (r1) at (0,1) {};

		\node[circle,draw,minimum size=8pt,inner sep=0pt,fill=white] (r2) at (2,1) {};
		
		\node[circle,draw,minimum size=8pt,inner sep=0pt,fill=white] (r3) at (4,1) {};
		
		\node[circle,draw,minimum size=8pt,inner sep=0pt,fill=white] (r4) at (6,1) {};
		
		\node[circle,draw,minimum size=8pt,inner sep=0pt,fill=white] (r5) at (4,2) {};

		\draw (r1) -- (r2);
		\draw (r3) -- (r4);
		\draw (r3) -- (r5);
		\node at(2.5,1){ $\cdots$};
		
		\node[below,sloped] at (r1) {$\widehat r_{1 2}$};
		\node[below,sloped] at (r2) {$\widehat r_{23}$};
		\node[below,sloped] at (r3) {$\widehat r_{n-2, n-1}$};
		\node[below,sloped] at (r4) {$\widehat r_{n-1,n}$};
		\node[below,sloped] at (r5) {$\widehat r'_{n-1,n}$};

		
	\end{tikzpicture}
\end{center}

\subsection{Super Weyl group for $\mathfrak{spo}(2m|2n)$ with $n\geq 2$ and its Coxeter graph}\label{sec: type D for CoGr}

Keep the conventions as before (for example Remark \ref{rem: white and black}). Based on Proposition \ref{app: gen seq} and Defining-sequence Theorem (Theorem \ref{prop: presenting seq type D}),  the Coxeter graph for $\mathfrak{spo}(2m|2n)$ can be described below.

\begin{theorem}\label{Thm5.3}
	The Coxeter graph of $\widehat{\mathcal{C}}(\mathfrak{spo}(2m|2n))$ with {$n\geq2$}  is
	
\begin{center}
	\begin{tikzpicture}
		
		\node[circle,draw,minimum size=8pt,inner sep=0pt,fill=white] (r1) at (-4,1) {};
		
		\node[circle,draw,minimum size=8pt,inner sep=0pt,fill=white] (r2) at (-3,1) {};
		
		\node[circle,draw,minimum size=8pt,inner sep=0pt,fill=white] (r3) at (-2,1) {};
		
		\node[circle,draw,minimum size=8pt,inner sep=0pt,fill=white] (r4) at (-1,1) {};
		
		\node[circle,draw,minimum size=8pt,inner sep=0pt,fill=white] (r5) at (0,1) {};
		\draw (r5.135) -- (r5.315);
		\draw(r5.225) -- (r5.45);

		\node[circle,draw,minimum size=8pt,inner sep=0pt,fill=white] (r6) at (1,1) {};
		
		\node[circle,draw,minimum size=8pt,inner sep=0pt,fill=white] (r7) at (2,1) {};
		
		\node[circle,draw,minimum size=8pt,inner sep=0pt,fill=white] (r8) at (3,1) {};
		
		\node[circle,draw,minimum size=8pt,inner sep=0pt,fill=white] (r9) at (5,1) {};
		
		\node[circle,draw,minimum size=8pt,inner sep=0pt,fill=white] (r10) at (3,2) {};

		\draw (r1) -- node[above=2pt] {\footnotesize 4} (r2);
		\draw (r2) -- (r3);
		\node at(-1.5,1){ $\cdots$};
		\draw (r4) -- node[above=2pt] {\footnotesize 12}(r5);
		\draw (r5) -- node[above=2pt] {\footnotesize 12}(r6);
		\draw (r6) -- (r7);
		\node at(2.5,1){ $\cdots$};
		\draw (r8) -- (r9);
		\draw (r8) -- (r10);
		
		\node[below=3pt] at (r1) {$\widehat r_{\bar 1}$};
		\node[below=3pt] at (r2) {$\widehat r_{\overline {12}}$};
		\node[below=3pt] at (r3) {$\widehat r_{\overline{23}}$};
		\node[below=3pt] at (r4) {$\widehat r_{\overline{m-1,m}}$};
		\node[below=3pt] at (r5) {$\widehat r_{\overline{m} 1}$};
		\node[below=3pt] at (r6) {$\widehat r_{1 2}$};
		\node[below=3pt] at (r7) {$\widehat r_{23}$};
		\node[below=3pt] at (r8) { $\widehat r_{n-2, n-1}$};
		\node[below=3pt] at (r9) {$\widehat r_{n-1,n}$};
		\node[below=3pt] at (r10) {$\widehat r'_{n-1,n}$};

	\end{tikzpicture}
\end{center}

\end{theorem}

\begin{proof}
Note that the Coxeter graph of  the subsystem $(\scrw, \spo)$ is already known (see \cite[Theorem 1.3.3]{GP}). It suffices for us to compute $m_{xy}$ involving $\widehat r_{\overline{m}1}$ as  introduced in  \S\ref{sec: fun sys} for Proposition \ref{prop: gen system}. Keeping the notations and statements in \S\ref{sec: 3.3}, we proceed with arguments on the super reflections involving $\widehat r_{\overline{m}1}$.
By the same reasoning as in the part (1) of the proof for Theorem \ref{Cox glmn}, we can see that $\widehat r_{\overline{m}1}$ commutes with all $\widehat r_{\overline{i,i+1}}$ and $\widehat r_{j,j+1}$ except $i+1=m$ or $j=1$. So we only need to compute the orders of $\widehat r_{\overline{m-1,m}} \widehat r_{\overline m1}$, and of $\widehat r_{\overline m1}\widehat r_{12}$. In the following, we give computation by considering their action in all nontrivial cases.
	
(1)	 For the reflection $\widehat r_{12}$, we calculate the order of $\widehat r_{12}\cdot \widehat r_{\overline{m}1}$ by the same argument as in the proof of Theorem \ref{Cox glmn}. {Basing upon} Proposition \ref{app: gen seq} and Defining-sequence Theorem, we divide it into the following four cases (in the arguments we don't care the position of $2$ (or $-2$) in a given sequence):
	
	(Case 1.1) Consider the case when  $\overline{m}$ and  $1$ appear adjacently in the sequence.  We have
 \begin{align*}
&(\cdots,\overline{m},1,2,\cdots)\overset
	{\widehat r_{\overline{m}1}}{\longrightarrow}(\cdots, 1,\overline{m},2,\cdots)\overset
	{\widehat r_{12}}{\longrightarrow}\cr
&(\cdots,2,\overline{m},1,\cdots)\overset
	{\widehat r_{\overline{m}1}}{\longrightarrow}(\cdots,2, 1,\overline{m},\cdots)\overset
	{\widehat r_{12}}{\longrightarrow}\cr
&(\cdots,1, 2,\overline{m},\cdots)\overset
	{\widehat r_{\overline{m}1}}{\longrightarrow}(\cdots,1, 2,\overline{m},\cdots)\overset
	{\widehat r_{12}}{\longrightarrow}\cr
&(\cdots,2, 1,\overline{m},\cdots)\overset
	{\widehat r_{\overline{m}1}}{\longrightarrow}(\cdots,2,\overline{m},1,\cdots)\overset
	{\widehat r_{12}}{\longrightarrow}\cr
&(\cdots,1,\overline{m},2,\cdots)\overset
	{\widehat r_{\overline{m}1}}{\longrightarrow}(\cdots,\overline{m},1,2, \cdots)\overset
	{\widehat r_{12}}{\longrightarrow}\cr
&(\cdots,\overline{m},2,1,\cdots)\overset
	{\widehat r_{\overline{m}1}}{\longrightarrow}(\cdots,\overline{m},2,1,\cdots)\overset
	{\widehat r_{12}}{\longrightarrow}\cr
&(\cdots,\overline{m},1,2,\cdots),
\end{align*}
{ which shows that the order of the action of $\widehat r_{12}\cdot \widehat r_{\overline{m}1}$ on such sequences is $6$,  and
 \begin{align*}
&(\cdots,\overline{m},1,-2,\cdots)\overset
	{\widehat r_{\overline{m}1}}{\longrightarrow}(\cdots, 1,\overline{m},-2,\cdots)\overset
	{\widehat r_{12}}{\longrightarrow}\cr
&(\cdots, 2,\overline{m}, -1,\cdots)\overset
	{\widehat r_{\overline{m}1}}{\longrightarrow}(\cdots,2, \overline{m},-1\cdots)\overset
	{\widehat r_{12}}{\longrightarrow}\cr
&(\cdots,1,\overline{m},-2\cdots)\overset
	{\widehat r_{\overline{m}1}}{\longrightarrow}(\cdots, \overline{m},1,-2\cdots)\overset
	{\widehat r_{12}}{\longrightarrow}\cr
&(\cdots,\overline{m}, 2,-1\cdots)\overset
	{\widehat r_{\overline{m}1}}{\longrightarrow}(\cdots,\overline{m},2,-1,\cdots)\overset
	{\widehat r_{12}}{\longrightarrow}\cr
&(\cdots,\overline{m},1,-2,\cdots)
\end{align*}
 which shows that  the order of the action of $\widehat r_{12}\cdot \widehat r_{\overline{m}1}$ on such sequences  is $4$.}

	(Case 1.2) Consider the case when  $-\overline{m}$  and $-1$ appear  adjacently in the middle of the sequence. We have
\begin{align*}
&	 (\cdots,2, -\overline{m},-1,\cdots)\overset
	{\widehat r_{\overline{m}1}}{\longrightarrow} (\cdots,2, -1,-\overline{m},\cdots)\overset
	{\widehat r_{12}}{\longrightarrow}\cr
&(\cdots, 1, -2,-\overline{m},\cdots)\overset
	{\widehat r_{\overline{m}1}}{\longrightarrow}(\cdots,1, -2,-\overline{m},\cdots)\overset
	{\widehat r_{12}}{\longrightarrow}\cr
&(\cdots,2,-1, -\overline{m},\cdots)\overset
	{\widehat r_{\overline{m}1}}{\longrightarrow}(\cdots,2, -\overline{m}, -1,\cdots)\overset
	{\widehat r_{12}}{\longrightarrow}\cr
&(\cdots, 1,-\overline{m},-2,\cdots)\overset
	{\widehat r_{\overline{m}1}}{\longrightarrow}(\cdots, 1,-\overline m,-2,\cdots) \overset
	{\widehat r_{12}}{\longrightarrow}\cr
&(\cdots,2,-\overline{m},-1,\cdots),
 \end{align*}
  which shows that  the order of the action of $\widehat r_{12}\cdot \widehat r_{\overline{m}1}$ on such sequences is $4$, and

\begin{align*}
&	 (\cdots,-2, -\overline{m},-1,\cdots)\overset
	{\widehat r_{\overline{m}1}}{\longrightarrow} (\cdots,-2, -1,-\overline{m},\cdots)\overset
	{\widehat r_{12}}{\longrightarrow}\cr
&(\cdots, -1, -2,-\overline{m},\cdots)\overset
	{\widehat r_{\overline{m}1}}{\longrightarrow}(\cdots,-1, -2,-\overline{m},\cdots)\overset
	{\widehat r_{12}}{\longrightarrow}\cr
&(\cdots,-2,-1, -\overline{m},\cdots)\overset
	{\widehat r_{\overline{m}1}}{\longrightarrow}(\cdots, -2, -\overline{m}, -1,\cdots)\overset
	{\widehat r_{12}}{\longrightarrow}\cr
&(\cdots, -1,-\overline{m},-2,\cdots)\overset
	{\widehat r_{\overline{m}1}}{\longrightarrow}(\cdots, -\overline m,-1, -2,\cdots) \overset
	{\widehat r_{12}}{\longrightarrow}\cr
&(\cdots,-\overline{m},-2,-1,\cdots)\overset
	{\widehat r_{\overline{m}1}}{\longrightarrow} (\cdots,-\overline{m},-2, -1,\cdots)\overset
	{\widehat r_{12}}{\longrightarrow}\cr
&(\cdots, -\overline{m},-1, -2,\cdots)\overset
	{\widehat r_{\overline{m}1}}{\longrightarrow}(\cdots, -1, -\overline{m},-2,\cdots)\overset
	{\widehat r_{12}}{\longrightarrow}\cr
&(\cdots,-2, -\overline{m},-1,\cdots),
 \end{align*}
which shows that  the order of the action of $\widehat r_{12}\cdot \widehat r_{\overline{m}1}$ on such sequences is $6$.

(Case 1.3) Consider the case when the sequence looks like $(\ldots,  \mp\overline m, \pm1)$,  corresponding to (Case 1.1) of Proposition \ref{app: gen seq}, say
$(\ldots, -\overline m, 1)$. We have
\begin{align}
&	 (\cdots,2, -\overline{m},1)\overset
	{\widehat r_{\overline{m}1}}{\longrightarrow} (\cdots,2, -1,-\overline{m})\overset
	{\widehat r_{12}}{\longrightarrow}\cr
&(\cdots, 1, -2,-\overline{m})\overset
	{\widehat r_{\overline{m}1}}{\longrightarrow}(\cdots,1, -2,-\overline{m})\overset
	{\widehat r_{12}}{\longrightarrow}\cr
&(\cdots,2,-1, -\overline{m})\overset
	{\widehat r_{\overline{m}1}}{\longrightarrow}(\cdots,2, -\overline{m}, 1)\overset
	{\widehat r_{12}}{\longrightarrow}\cr
&(\cdots, 1,-\overline{m},2)\overset
	{\widehat r_{\overline{m}1}}{\longrightarrow}(\cdots, 1,-\overline m,2) \overset
	{\widehat r_{12}}{\longrightarrow}\cr
&(\cdots,2,-\overline{m},1),
 \end{align}
  which shows that  the order of the action of $\widehat r_{12}\cdot \widehat r_{\overline{m}1}$ on such sequences is $4$, and
   \begin{align*}
&	 (\cdots,-2, -\overline{m},1)\overset
	{\widehat r_{\overline{m}1}}{\longrightarrow} (\cdots,-2, -1,-\overline{m})\overset
	{\widehat r_{12}}{\longrightarrow}\cr
&(\cdots, -1, -2,-\overline{m})\overset
	{\widehat r_{\overline{m}1}}{\longrightarrow}(\cdots,-1, -2,-\overline{m})\overset
	{\widehat r_{12}}{\longrightarrow}\cr
&(\cdots,-2,-1, -\overline{m})\overset
	{\widehat r_{\overline{m}1}}{\longrightarrow}(\cdots, -2, -\overline{m}, 1)\overset
	{\widehat r_{12}}{\longrightarrow}\cr
&(\cdots, -1,-\overline{m},2)\overset
	{\widehat r_{\overline{m}1}}{\longrightarrow}(\cdots, -\overline m,-1,2) \overset
	{\widehat r_{12}}{\longrightarrow}\cr
&(\cdots,-\overline{m},-2,1)\overset
	{\widehat r_{\overline{m}1}}{\longrightarrow}(\cdots,-\overline{m},-2,1)\overset
	{\widehat r_{12}}{\longrightarrow}\cr
&(\cdots,-\overline{m},-1,2)\overset
	{\widehat r_{\overline{m}1}}{\longrightarrow}(\cdots,-1, -\overline m,2)\overset
	{\widehat r_{12}}{\longrightarrow}\cr
&(\cdots,-2,-\overline{m},1)
 \end{align*}
 which shows that  the order of the action of $\widehat r_{12}\cdot \widehat r_{\overline{m}1}$ on such sequences is $6$.

	(Case 1.4) Consider the case when $\overline{m}$ is not adjacent to $1$ and $2$ in the defining sequence. We have
%
\begin{align*}
&(\cdots,\overline{m}, 3, \pm 2,\pm1,\cdots)\overset
	{\widehat r_{\overline{m}1}}{\longrightarrow}(\cdots,\overline{m}, 3, \pm2,\pm1,\cdots)\overset
	{\widehat r_{12}}{\longrightarrow} \cr
&(\cdots,\overline{m},  3, \pm1, \pm2,\cdots)\overset
	{\widehat r_{\overline{m}1}}{\longrightarrow}(\cdots,\overline{m},  3,  \pm1,\pm 2,\cdots)\overset
	{\widehat r_{12}}{\longrightarrow}\cr
&(\cdots, \overline{m}, 3, \pm2, \pm1,\cdots),
\end{align*}
	which shows that  the order of the action of $\widehat r_{12}\cdot \widehat r_{\overline{m}1}$ on such sequences  is $2$, and
\begin{align*}
&(\cdots,\overline{m}, 3, \pm 2,\mp1,\cdots)\overset
	{\widehat r_{\overline{m}1}}{\longrightarrow}(\cdots,\overline{m}, 3, \pm2,\mp1,\cdots)\overset
	{\widehat r_{12}}{\longrightarrow} \cr
&(\cdots,\overline{m},  3, \pm1, \mp2,\cdots)\overset
	{\widehat r_{\overline{m}1}}{\longrightarrow}(\cdots,\overline{m},  3,  \pm1,\mp 2,\cdots)\overset
	{\widehat r_{12}}{\longrightarrow}\cr
&(\cdots, \overline{m}, 3, \pm2, \mp1,\cdots),
\end{align*}
 which  shows that  the order of the action of $\widehat r_{12}\cdot \widehat r_{\overline{m}1}$ on such sequences   is $2$.

Recall that there are $\frac{n}{m+n}2^{m+n-1}\cdot(m+n)!+\frac{m}{m+n}2^{m+n}\cdot(m+n)!= (n+2m)2^{m+n-1}\cdot(m+n-1)!$ possible fundamental systems (cf. \S\ref{S5.2}).
 When $m+n\geq 3$, all (Cases 1.1-1.4) necessarily  occur on some sequences. So the least common multiple of the orders $6$, $4$ and $2$ respectively occurring for different cases must be exactly the order of $\widehat r_{12}\cdot \widehat r_{\overline{m}1}$, which is equal to $12$.
		
(2) 	Next calculate the order of $\widehat r_{\overline{m-1,m}}\cdot \widehat r_{\overline{m}1}$. By the same arguments as the ones for $\widehat r_{12}\cdot \widehat r_{\overline{m}1}$,  we can compute the order of $\widehat r_{\overline{m-1,m}}\cdot \widehat r_{\overline{m}1}$ which is $12$.	 Summing up, the proof is completed.
\end{proof}

\subsection{Coxeter graph for $\mathfrak{spo}(2m|2)$  (type $C(m)$)}\label{sec: type C for CoGr}
 As mentioned in \S\ref{sec: fun sys}(iii), the Lie superalgebra $\mathfrak{spo}(2m|2)$ has the fundamental system
 \begin{align}\label{eq: FR first type C}
 \Pi=\{\vep_1-\delta_1,\delta_1-\delta_2,\ldots, \delta_{m-1}-\delta_m, 2\delta_m\}
 \end{align}
    which coincides with the extended standard fundamental system  $\widetilde\Pi$.  We have to deal with this case separately. We first look at  the case  $\mathfrak{spo}(2|2)$ which is different from others.

\subsubsection{$\mathfrak{spo}(2|2)$}\label{ex: spo22}
	For $\mathfrak{spo}(2|2)$, the root system is $\Phi=\{\pm\delta_1\pm\varepsilon_1, \pm\delta_1\}$. As $\widehat r_{\delta_1+\varepsilon_{1}} =\widehat r_{2\delta_1}\cdot \widehat r_{\delta_1-\varepsilon_{1}}\cdot \widehat r_{2\delta_1}$, it is enough to   consider  $\widehat r_{\delta_1-\varepsilon_{1}}$ and $\widehat r_{2\delta_1}$.
Set $\widehat r_{\bar{1}1}:=\widehat r_{\delta_1-\varepsilon_{1}}$ and $\widehat r_{\bar{1}}=\widehat r_{2\delta_1}$.	
There are  $6$ different fundamental systems as follows:
\begin{align*}	
&\{ \varepsilon_1-\delta_1, 2\delta_1\}{\overset{\widehat r_{\bar11}}{\longleftrightarrow}}
\{\delta_1-\varepsilon_1, \varepsilon_1+\delta_1\}{\overset{\widehat r_{\bar1}}{\longleftrightarrow}}
\{-\delta_1 -\varepsilon_1, \varepsilon_1-\delta_1\}{\overset{\widehat r_{\bar11}}{\longleftrightarrow}}\cr
{\longleftrightarrow}  &\{ -2\delta_1, \delta_1-\varepsilon_1\}{\overset{\widehat r_{\bar1}}{\longleftrightarrow}}
	\{2\delta_1, -\delta_1-\varepsilon_1\}{\overset{\widehat r_{\bar1}(\widehat r_{\bar11}\widehat r_{\bar1})^2}{\longleftrightarrow}}
	\{\varepsilon_1+\delta_1, -2\delta_1\}
		\end{align*}
which are indexed by $\{1,2,\ldots, 6\}$ in the above order. Here ${\overset{\widehat r_{\bar1}(\widehat r_{\bar11}\widehat r_{\bar1})^2}{\longleftrightarrow}}
	\{\varepsilon_1+\delta_1, -2\delta_1\}$ is equivalent to
$$ \{ \varepsilon_1-\delta_1, 2\delta_1\}{\overset{\widehat r_{\bar1}}{\longrightarrow}}\{\varepsilon_1+\delta_1, -2\delta_1\}.$$

Similar to the arguments in the case of $\mathfrak{gl}(1|2)$, we have that $\widehat r_{\bar{1}1} = (12)(34)$, $\widehat r_{\bar{1}} = (16)(23)(45)$, and	$\widehat r_{\bar{1}1}\widehat r_{\bar{1}} = (162453)$. So the order of $\widehat r_{\bar{1}1}\widehat r_{\bar{1}}$ is $6$.
Then it is easily seen that $\widehat{W}(\mathfrak{spo}(2|2))\cong G_2$, which is exactly the dihedral group of order $12$. The Coxeter graph of $\wsc(\mathfrak{spo}(2|2))$ is	
\begin{center}
	\begin{tikzpicture}
		
		\node[circle,draw,minimum size=8pt,inner sep=0pt,fill=white] (r1) at (-2,1) {};
		
		\node[circle,draw,minimum size=8pt,inner sep=0pt,fill=white] (r2) at (0,1) {};
		\draw (r2.135) -- (r2.315);
		\draw(r2.225) -- (r2.45);

		\draw (r1) -- node[above=2pt] {\footnotesize 6} (r2);

		\node[below=3pt] at (r1) {$\widehat r_{\bar{1}}$};
		\node[below=3pt] at (r2) { $\widehat r_{\bar{1}1}$};
		
	\end{tikzpicture}
\end{center}
%

\subsubsection{$\mathfrak{spo}(2m|2)$ ($m\geq2$)}
Keep in mind that the fundamental root system $\Pi$ (coinciding with the extend one $\widetilde \Pi$) is presented as in (\ref{eq: FR first type C}). Now consider the general $\mathfrak{spo}(2m|2)$ ($m\geq2$).
Still set
$\widehat r_{\overline m}:=\widehat r_{-2\delta_m}=\widehat r_{2\delta_m},
\widehat r_{\overline{ i,i+1}}:=\widehat r_{\delta_i-\delta_{i+1}}$
 and
$\widehat r_{\overline{1}1} := \widehat r_{ \delta_{1}-\varepsilon_1}$.

 Summarizing Lemmas \ref{lem: lemma 2.8 for type C} and
 \ref{lem: lemma 2.9 for type C} and reformulating them,  we have the following result which is parallel to  Proposition \ref{app: gen seq} with minor modifications.


\begin{proposition}\label{app: type C seq}
Suppose $\ggg=\mathfrak{spo}(2m|2)$ with $m>1$, and make the standard fundamental system (\ref{eq: FR first type C}) correspond to the sequence
    $(1,\bar 1,\bar 2, \ldots, \overline m)$.
Suppose a fundamental system $\Pi_1$ comes from another fundamental system $\Pi_0$ by a simple reflection $\widehat r_\theta$ for $\theta\in \Pi$,
    \begin{itemize}
    \item[(1)] if $\theta=\vep_1-\delta_1$
    \begin{itemize}
        \item[(Case 1.1)] when $\pm 1$ is in the end of the defining sequence of $\Pi_0$, in the meanwhile the sequence is of the form
        $$(\cdots\cdots,\mp \overline{1},\pm 1),$$
        which means that $\Pi_0$ looks like $(\cdots, \mp(\delta_1+\varepsilon_1), \pm(\vep_1-\delta_1))$,
        then  $\Pi_1$ corresponds to the sequence $$(\cdots\cdots,\mp 1,\mp\overline{1}).$$
         This means that the defining sequence of $\Pi_1$ is obtained by exchanging the positions of $\mp\overline 1$ and $\pm1$ of $\Pi_0$, and the leftward $\pm 1$ changes the sign.

   \item[(Case 1.2)] Otherwise, equivalently the defining sequence of $\Pi_0$ can be written as
        $$(\cdots ,\pm \overline{1},\pm 1, \cdots) \text{ or }(\cdots, \pm 1,\pm \overline 1, \cdots),$$
        which means that $\Pi_0$ looks like $(\cdots, \pm(\delta_1-\varepsilon_1), \pm(\varepsilon_1+\heartsuit), \cdots)$ or  $(\cdots, \pm(\varepsilon_1-\delta_1), \pm(\delta_1+\diamondsuit), \cdots)$ with $\heartsuit,\diamondsuit\in \{\pm\vep_i\mid i\in I(m|n)\}$ such that $\varepsilon_1+\heartsuit$ and $\delta_1+\diamondsuit$ are roots,
        then  $\Pi_1$ corresponds to the sequence $$(\cdots ,\pm 1,\pm \overline{1},\cdots) \text{ or }(\cdots, \pm \overline 1, \pm1,\cdots),$$
         respectively.  This means that we only exchange the positions of $\pm\overline m$ and $\pm1$, all others do not change.
        \end{itemize}

    \item[(2)] Suppose  $\theta=\delta_{{i}}-\delta_{{i+1}}$
    and consequently the defining sequence of $\Pi_0$ can be written as
\begin{itemize}
\item[(Case 2.1)]
       $(\cdots ,\pm\bar{i},\cdots,\pm \overline{(i+1)}, \cdots) \text{ or }(\cdots, \pm\overline{(i+1)},\cdots,\pm\bar{i}, \cdots)$; or
  \item[(Case 2.2)] $(\cdots ,\pm\bar{i},\cdots,\mp \overline{(i+1)}, \cdots) \text{ or }(\cdots, \mp\overline{(i+1)},\cdots,\pm\bar{i}, \cdots)$.
\end{itemize}

        Then in (Case 2.1),  $\Pi_1$ corresponds to the sequence $$(\cdots ,\pm \overline{(i+1)},\cdots,\pm \bar{i},\cdots) \text{ or }(\cdots, \pm\bar{i},\cdots,\pm\overline{(i+1)}, \cdots),$$
         respectively,  which means that we only exchange the positions of $\pm\bar{i}$ and $\pm\overline{(i+1)}$, all others do not change;
          In (Case 2.2),  $\Pi_1$ corresponds to the sequence $$(\cdots ,\pm \overline{(i+1)},\cdots,\mp \bar{i},\cdots) \text{ or }(\cdots, \mp\bar{i},\cdots,\pm\overline{(i+1)}, \cdots),$$
         respectively,  which means that we exchange the positions of $\pm\bar{i}$ and $\mp\overline{(i+1)}$, and simultaneously change both signs, all others do not change.

    \item[(3)] If $\widehat r_\theta= \widehat r_{-2\delta_m}= \widehat r_{2\delta_m}$ and consequently the defining sequence of\, $\Pi_0$ can be written as $(\cdots,\pm \overline m,\cdots)$, then $\Pi_1$ corresponds to the sequence $(\cdots,\mp \overline m,\cdots)$.  This means that we only change the sign of $\pm\overline m$,   all others do not change.
        \end{itemize}
\end{proposition}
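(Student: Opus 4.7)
The plan is to prove Proposition \ref{app: type C seq} by reading off the action of each super simple reflection on the defining sequence directly from the definitions, using the structural Lemmas \ref{lem: lemma 2.8 for type C} and \ref{lem: lemma 2.9 for type C} as the backbone. By Lemma \ref{lem: lemma 2.8 for type C}, any fundamental system $\Pi_0$ has the form (\ref{eq: general fs form}) with $n=1$, so its defining sequence $\sfd(\Pi_0)=(\natural_1 i_1,\ldots,\natural_{m+1} i_{m+1})$ completely records $\Pi_0$ (by Lemma \ref{lem: lemma 2.9 for type C}). Thus the proposition will follow once I trace, for each $\theta\in\widetilde\Pi$, what $\widehat r_\theta(\Pi_0)$ looks like and match the result against the stated rules. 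I will split the argument into the three parts corresponding to the three types of simple roots.

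For part (2), the reflection $\widehat r_{\delta_i-\delta_{i+1}}$ is an ordinary (even) reflection. Exactly as in the proof of Lemma \ref{lem: str lem for def seq ordianary case}, it is identified with the transposition $(\bar i,\overline{i+1})$ acting on the index set $\{\pm\bar 1,\ldots,\pm\overline m;\pm 1\}$ and fixing all other indices. Applying it to an ordered $\Pi_0$ of the form (\ref{eq: general fs form}) term by term, an initial pair $\natural_k\vep_{i_k}-\natural_{k+1}\vep_{i_{k+1}}$ with $\{i_k,i_{k+1}\}\cap\{\bar i,\overline{i+1}\}=\varnothing$ is fixed; the interesting situations are exactly those where $\bar i$ and $\overline{i+1}$ appear as subscripts in the sequence, yielding either Case 2.1 (same signs, positions swapped) or Case 2.2 (opposite signs, positions swapped and both signs flipped, arising from the cancellation $\pm(\vep_{\bar i}-\mp\vep_{\overline{i+1}})$ under the transposition). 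Part (3) is analogous but simpler: $\widehat r_{2\delta_m}$ is identified with the sign change $\overline m\mapsto -\overline m$ (fixing all other indices), so it merely flips the sign of the entry labelled by $\overline m$ in $\sfd(\Pi_0)$.

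For part (1), the reflection $\widehat r_{\vep_1-\delta_1}$ is odd. If neither $\vep_1-\delta_1$ nor $\delta_1-\vep_1$ appears in $\Pi_0$, it fixes $\Pi_0$ and nothing happens to $\sfd(\Pi_0)$. Otherwise let $\theta_k=\pm(\delta_1-\vep_1)$; by the form (\ref{eq: general fs form}) for $n=1$, there are essentially two positional configurations to consider. In Case 1.2 the reflection acts by the standard odd-reflection recipe from \S\ref{sec: order conv after odd general} applied via (\ref{eq: change order after odd ref})--(\ref{eq: change order after odd ref-2}): the root $\theta_k$ flips to $-\theta_k$ and its two neighbours $\natural_{k-1}\vep_{i_{k-1}}-\sharp_k\vep_{i_k}$ and $\natural_k\vep_{i_k}-\sharp_{k+1}\vep_{i_{k+1}}$ become $\natural_{k-1}\vep_{i_{k-1}}-\sharp_{k+1}\vep_{i_{k+1}}$ and $\sharp_{k+1}\vep_{i_{k+1}}-\ldots$, with all initial terms outside $\{\pm\vep_1,\pm\delta_1\}$ unchanged; on the level of sequences this is precisely the swap of $\pm\overline 1$ and $\pm 1$ described. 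Case 1.1 is the special configuration $(\cdots,\mp(\delta_1+\vep_1),\pm(\vep_1-\delta_1))$, where $k=N$ and $\theta_{N-1}=\mp(\delta_1+\vep_1)$; now Convention \ref{conven: change of order under odd ref for type C}(Case 1) forces a reordering in which $\mp 2\delta_1$ is pushed to the last position, and what was $\pm(\vep_1-\delta_1)$ becomes $\mp(\vep_1-\delta_1)$ sitting in position $N-1$. Translating back to defining sequences, this is exactly the exchange of $\mp\overline 1$ and $\pm 1$ with a sign flip of the leftward $\pm 1$.

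The main obstacle will be Case 1.1, because it is the only place where the nonstandard reordering convention intervenes and where one must verify that the resulting sequence genuinely records the transformed fundamental system in the form (\ref{eq: general fs form}); this check amounts to saying that after the reflection the terminal root $\mp 2\delta_1$ has been correctly relocated, which is exactly what Convention \ref{conven: change of order under odd ref for type C}(Case 1) is designed to guarantee. Once this single configuration is handled, the remaining cases are mechanical bookkeeping. Finally, exhaustiveness of the cases is automatic: by Lemma \ref{lem: lemma 2.8 for type C} every $\Pi_0$ has the form (\ref{eq: general fs form}), so the positions of the subscripts $\overline i$, $\overline{i+1}$, $\overline 1$, $1$, $\overline m$ in $\sfd(\Pi_0)$ fall into one and only one of the listed patterns, completing the proof.
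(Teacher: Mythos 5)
Your proposal is correct and follows essentially the same route as the paper: the paper presents this proposition as a summary and reformulation of Lemmas \ref{lem: lemma 2.8 for type C} and \ref{lem: lemma 2.9 for type C} (parallel to Proposition \ref{app: gen seq}), and you carry out exactly that reading-off of each reflection's action on defining sequences, with the correct identification of the even reflections as index transpositions/sign changes and the correct appeal to Convention \ref{conven: change of order under odd ref for type C}(Case 1) for the one delicate configuration (Case 1.1). Your explicit verification of Case 1.1 is in fact more detailed than what the paper records.
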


Then as mentioned in Remark \ref{rem: super W-equivariant}, we have a correspondence between fundamental systems and their defining sequences for type $C$.

\begin{theorem}\label{prop: presenting seq type C} (Defining-sequence Theorem) The set of fundamental systems of $\ggg=\mathfrak{spo}(2m|2)$ are in a one-to-one correspondence with the set of defining sequences.  {Such a one-to-one correspondence is $\widehat W$-equivariant in the sense that  it is compatible with  $\widehat W$-action.}
\end{theorem}

{
\begin{proof} It follows from the similar arguments to Theorem \ref{prop: presenting seq type D}.
\end{proof}
}

\subsubsection{} Based on Proposition \ref{app: type C seq} and Theorem \ref{prop: presenting seq type C}, the Coxeter graph for $C(m)$ can be presented below.
\begin{theorem} \label{thm: type C}	
The Coxeter graph of  $\wsc$ for $\ggg=\mathfrak{spo}(2m|2)$ ($m\geq2$) is

\begin{center}
	\begin{tikzpicture}
		
		\node[circle,draw,minimum size=8pt,inner sep=0pt,fill=white] (r1) at (-4,1) {};
		
		\node[circle,draw,minimum size=8pt,inner sep=0pt,fill=white] (r2) at (-3,1) {};
		
		\node[circle,draw,minimum size=8pt,inner sep=0pt,fill=white] (r3) at (-2,1) {};
		
		\node[circle,draw,minimum size=8pt,inner sep=0pt,fill=white] (r4) at (-1,1) {};
		
		\node[circle,draw,minimum size=8pt,inner sep=0pt,fill=white] (r5) at (0,1) {};
		\draw (r5.135) -- (r5.315);
		\draw(r5.225) -- (r5.45);

		\draw (r1) -- node[above=2pt] {\footnotesize 4} (r2);
		\draw (r2) -- (r3);
		\node at(-1.5,1){ $\cdots$};
		\draw (r4) -- node[above=2pt] {\footnotesize 12} (r5);
		
		\node[below=3pt] at (r1) {$\widehat r_{\overline 1}$};
		\node[below=3pt] at (r2) {$\widehat r_{\overline{12}}$};
		\node[below=3pt] at (r3) {$\widehat r_{ \overline {23}}$};
		\node[below=3pt] at (r4) {$\widehat r_{\overline{m-1,m}}$};
		\node[below=3pt] at (r5) {$\widehat r_{\overline{m}1}$};
		
	\end{tikzpicture}
\end{center}

\end{theorem}

\begin{proof}
For the Coxeter graph for $\mathfrak{spo}(2m|2)$, the situation is similar to  the one for $\mathfrak{spo}(2m|2n)$ with $n\geq 2$. By the same arguments,  the order of $\widehat r_{\overline{m-1,m}}\cdot \widehat r_{\overline{m}1}$ is equal to $12$. Thus we  obtain the Coxeter graph as stated.
\end{proof}

\section{Coxeter graph for $B(m|n)$}\label{se:4}

Let $\ggg=\mathfrak{spo}(2m|2n+1)$. Due to (0.4.2) in \S\ref{sec: main results forward}, we will assume $n>0$ in this section.

\subsection{} Recall that we have an order for the standard  fundamental system of $\ggg$ which is (cf. (\ref{F1.3}))
\begin{align}\label{eq: st fund sys B}
\Pi=\{\varepsilon_{\bar i}-\varepsilon_{\overline {i+1}},\varepsilon_{\overline m}-\varepsilon_{1}, \varepsilon_{k}-\varepsilon_{k+1}, \varepsilon_{n}\mid 1\leq i\leq m-1, 1\leq k\leq n-1\}.
\end{align}
Corresponding to the standard  fundamental system,
 the positive root system of $\ggg$ is  $$\{\delta_{i}\pm\delta_{j},  2\delta_{p}, \varepsilon_{k}\pm \varepsilon_{l}, \varepsilon_{q}\}\cup\{\delta_{p}\pm\varepsilon_{q},\delta_{p}\}$$
where $1\leq i<j\leq m, 1\leq k<l\leq n, 1\leq p\leq m, 1\leq q\leq n$.
 We have already made an appointment by setting $\varepsilon_{\bar i}=\delta_{i}, 1\leq i\leq m.$

Recall that the extended standard fundamental system  $\widetilde \Pi$ for $\mathfrak{spo}(2m|2n+1)$ with $n>0$ is $ \widetilde\Pi= \{-2\delta_1\}\cup \Pi$ (cf. (\ref{eq: ext fund sys})), and that the Coxeter system is $(\wsc, \ssp)$.

\subsection{Defining-sequence theorem}
Keep in mind that the fundamental root system $\Pi$ is presented as in (\ref{eq: st fund sys B}).
Set
$\widehat r_{\bar 1}:=\widehat r_{-2\delta_1}=\widehat r_{2\delta_1},
\widehat r_{\overline{ i,i+1}}:=\widehat r_{\delta_i-\delta_{i+1}}$,
$\widehat r_{i,i+1}:=\widehat r_{\vep_i-\vep_{i+1}}$, $\widehat r_n:=\widehat r_{\vep_n}$
 and
$\widehat r_{\overline{m}1} := \widehat r_{ \delta_m-\varepsilon_1}$.

 Summarizing Lemmas \ref{lem: lemma 2.10 for type B} and
 \ref{lem: lemma 2.11 for type B} and reformulating them,  we have the following results which are parallel to  Proposition \ref{app: gen seq} with minor modifications.


\subsubsection{$\ggg=\mathfrak{spo}(2m|2n+1)$ with $n>1$}
\begin{proposition}\label{app: type B seq}
Let $\ggg=\mathfrak{spo}(2m|2n+1)$ with $n>1$, and the ordered  standard fundamental system (\ref{eq: st fund sys B}) correspond to the sequence
    $(\bar 1,\bar 2, \ldots, \overline m, 1,\ldots,n-1,n)$.
Let $\Pi_1$ be a fundamental system coming from another fundamental system $\Pi_0$ by a simple reflection $\widehat r_\theta\in \textsf{S}_{\widetilde\Pi}$. The following statements hold.
    \begin{itemize}
    \item[(1)] Suppose $\theta=\delta_m-\vep_1$,  consequently the defining sequence of $\Pi_0$ can be written as
        $$(\cdots ,\pm \overline{m},\pm 1, \cdots) \text{ or }(\cdots, \pm 1,\pm \overline m, \cdots),$$
        which means that $\Pi_0$ looks like $(\cdots, \pm(\delta_m-\varepsilon_1), \pm(\vep_1+\heartsuit), \cdots)$ or  $(\cdots, \pm(\vep_1-\delta_m), \pm(\delta_m+\diamondsuit), \cdots)$ with $\heartsuit,\diamondsuit\in \{\pm\vep_i\mid i\in I(m|n)\}$ such that $\vep_1+\heartsuit$ and $\delta_m+\diamondsuit$ are roots,
        then  $\Pi_1$ corresponds to the sequence $$(\cdots ,\pm 1,\pm \overline{m},\cdots) \text{ or }(\cdots, \pm \overline m, \pm1,\cdots),$$
         respectively.  This means that we only exchange the positions of $\pm\overline m$ and $\pm1$, all others do not change.

    \item[(2)] Suppose  $\theta=\varepsilon_{\widetilde{i}}-\varepsilon_{\widetilde{i+1}}$ for $\widetilde{i}\in\{\bar 1,\ldots,\overline{m-1}\}$ or $ \{1,\ldots,n-1\}$
    and consequently the defining sequence of $\Pi_0$ can be written as
\begin{itemize}
\item[(Case 2.1)]
       $(\cdots ,\pm\widetilde{i},\cdots,\pm \widetilde{(i+1)}, \cdots) \text{ or }(\cdots, \pm\widetilde{(i+1)},\cdots,\pm\widetilde{i}, \cdots)$; or
  \item[(Case 2.2)] $(\cdots ,\pm\widetilde{i},\cdots,\mp \widetilde{(i+1)}, \cdots) \text{ or }(\cdots, \mp\widetilde{(i+1)},\cdots,\pm\widetilde{i}, \cdots)$.
\end{itemize}

        Then in (Case 2.1),  $\Pi_1$ corresponds to the sequence $$(\cdots ,\pm \widetilde{(i+1)},\cdots,\pm \widetilde{i},\cdots) \text{ or }(\cdots, \pm\bar{i},\cdots,\pm\widetilde{(i+1)}, \cdots),$$
         respectively,  which  means that we only exchange the positions of $\pm\widetilde{i}$ and $\pm\widetilde{(i+1)}$, all others do not change;
          In (Case 2.2),  $\Pi_1$ corresponds to the sequence $$(\cdots ,\pm \widetilde{(i+1)},\cdots,\mp \widetilde{i},\cdots) \text{ or }(\cdots, \mp\widetilde{i},\cdots,\pm\widetilde{(i+1)}, \cdots),$$
         respectively,  which  means that we exchange the positions of $\pm\widetilde{i}$ and $\mp\widetilde{(i+1)}$, and simultaneously change both signs, all others do not change.

    \item[(3)] Suppose $\widehat r_\theta= \widehat r_{-2\delta_1}= \widehat r_{2\delta_1}$, consequently the defining sequence of $\Pi_0$ can be written as $(\cdots,\pm \overline 1,\cdots)$, then $\Pi_1$ corresponds to the sequence $(\cdots,\mp \overline 1,\cdots)$.  This means that we only change the sign of $\pm\overline 1$,   all others do not change.
        \item[(4)] Suppose $\widehat r_\theta= \widehat r_{\vep_n}$,  consequently the defining sequence of $\Pi_0$ can be written as $(\cdots, \pm n,\cdots)$, then $\Pi_1$ corresponds to the sequence $(\cdots,\mp n,\cdots)$. This  means that we only change the sign of $\pm n$,   all others do not change.

        \end{itemize}
\end{proposition}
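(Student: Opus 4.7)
The plan is to reduce this proposition to a direct unpacking of the rules in \S\ref{sec: order conv after odd general} and Lemma \ref{fun lemm} at the level of defining sequences, using the structural results already established for type $B(m|n)$.

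First I would invoke Lemma \ref{lem: lemma 2.10 for type B}(1) to conclude that if $\Pi_0$ is in the form (\ref{eq: general fs form})${}_\clubsuit$, then so is $\Pi_1 = \widehat r_\theta(\Pi_0)$; combined with Lemma \ref{lem: lemma 2.11 for type B}, this guarantees that both $\Pi_0$ and $\Pi_1$ have well-defined, unique defining sequences. So it suffices to compute $\sfd(\Pi_1)$ from $\sfd(\Pi_0)$ for each of the four types of $\theta \in \widetilde\Pi$, and to match the outcome with the prescription stated. I would dispose of cases (3) and (4) first: $\widehat r_{2\delta_1}$ acts on $\hhh^*$ as $\delta_1 \mapsto -\delta_1$ and fixes all other $\epsilon_i$, and $\widehat r_{\vep_n}$ acts as $\vep_n \mapsto -\vep_n$ fixing all others, so by the convention in \S\ref{sec: order conv after odd general} for ordinary reflections, each simple root $\theta_j = \natural_j \vep_{i_j} - \sharp_{j+1}\vep_{i_{j+1}}$ of $\Pi_0$ maps term-by-term, producing exactly the claimed sign flip on $\pm\overline 1$ or $\pm n$ at the sequence level.

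For case (2), with $\theta = \vep_{\widetilde i} - \vep_{\widetilde{i+1}}$, $\widehat r_\theta$ acts on $\hhh^*$ as the transposition $\vep_{\widetilde i} \leftrightarrow \vep_{\widetilde{i+1}}$. I would write out $\widehat r_\theta(\theta_j)$ using (\ref{eq: general fs form})${}_\clubsuit$ and observe that in Case~2.1 the consecutive sequence entries $\pm \widetilde i$ and $\pm \widetilde{i+1}$ (possibly at nonadjacent positions) swap their occurrences, with no sign changes (since the relevant pair of neighboring roots in $\Pi_0$ already carries matching signs, by the structural condition $\sharp_k = \natural_k$ for $k = 2,\ldots,m+n-1$ in (\ref{eq: general fs form})); while in Case~2.2, the opposite sign alignment forces, after the swap, both exchanged entries to flip sign, since one traces through $\natural_j\vep_{\widetilde i} - \sharp_{j+1}\vep_{\widetilde{i+1}}$ with $\sharp_{j+1} = -\natural_j$ and applies the transposition. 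This is a pure linear-algebra verification against the form (\ref{eq: general fs form})${}_\clubsuit$.

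Case (1) is the main obstacle and requires more care. I would suppose $\theta_k = \pm(\delta_m - \vep_1)$ appears in $\Pi_0$; by (\ref{eq: general fs form})${}_\clubsuit$ the entries $\pm \overline m$ and $\pm 1$ occur in adjacent positions of $\sfd(\Pi_0)$, of the shape $(\cdots, \pm\overline m, \pm 1, \cdots)$ or $(\cdots, \pm 1, \pm\overline m, \cdots)$ corresponding to $\theta_k = \pm(\delta_m - \vep_1)$ or $\pm(\vep_1 - \delta_m)$ respectively. By the second convention of \S\ref{sec: order conv after odd general}, $\theta_k$ becomes $-\theta_k$ under $\widehat r_\theta$, which at the sequence level is precisely the exchange of adjacent entries $\pm\overline m \leftrightarrow \pm 1$. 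Then the neighbors $\theta_{k-1}$ and $\theta_{k+1}$ change by addition of $\theta$ only if $(\theta_{k\pm 1}, \theta) \ne 0$; by the adjacency property built into (\ref{eq: general fs form})${}_\clubsuit$ (i.e.\ $\sharp_k = \natural_k$), the initial term of $\theta_{k+1}$ equals $\mp$(rear term of $\theta_k$), so $\theta_{k+1}' = \theta_{k+1} + \theta$ has the effect of replacing its initial $\epsilon$-label but preserving the matching with the new $\theta_k' = -\theta_k$; an analogous statement holds for $\theta_{k-1}$. The upshot is that only the positions of $\pm\overline m$ and $\pm 1$ swap, with no sign change, and all other entries in $\sfd$ are untouched. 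Crucially, unlike type $D(m|n)$, the corner situation of Case~1.1 of Proposition~\ref{app: gen seq} cannot arise here because, by (\ref{eq: general fs form})${}_\clubsuit$, the final entry of any defining sequence for $\ggg=\mathfrak{spo}(2m|2n+1)$ with $n > 1$ is a single-term non-isotropic root $\pm\vep_k$, so the isotropic pair $\pm\overline m, \pm 1$ never sits at the tail. This absence of the endpoint corner case is what makes (1) cleaner than its type $D$ analogue, and recording this dichotomy carefully is the single point where one must be alert.
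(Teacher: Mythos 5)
Your proposal is correct and follows essentially the same route as the paper, which offers no separate proof but presents the proposition as a summary and reformulation of Lemmas \ref{lem: lemma 2.10 for type B} and \ref{lem: lemma 2.11 for type B} together with the order conventions of \S\ref{sec: order conv after odd general}; you simply make that reduction explicit by unpacking the conventions case by case. Your observation that the type-$D$ corner case (Case 1.1 of Proposition \ref{app: gen seq}) cannot occur here because the last root in the form (\ref{eq: general fs form})${}_\clubsuit$ is a single-term root $\pm\vep_k$ is exactly the ``minor modification'' the paper alludes to.
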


\subsubsection{ $\ggg=\mathfrak{spo}(2m|3)$}  In this case, the ordered standard fundamental system (\ref{eq: st fund sys B}) becomes $$\Pi=\{\delta_1-\delta_2,\ldots,\delta_{m-1}-\delta_m, \delta_m-\vep_1,\vep_1\}.$$
\begin{proposition}\label{app: type B seq n=1}
Suppose $\ggg=\mathfrak{spo}(2m|3)$, and make the ordered standard fundamental system (\ref{eq: st fund sys B}) correspond to the sequence
    $(\bar 1,\bar 2, \ldots, \overline m, 1)$.
Suppose a fundamental system $\Pi_1$ comes from another fundamental system $\Pi_0$ by a simple reflection $\widehat r_\theta\in \textsf{S}_{\widetilde\Pi}$.
    \begin{itemize}
    \item[(1)] If $\theta=\delta_m-\vep_1$ and consequently the defining sequence of $\Pi_0$ can be written as
        $$(\cdots ,\pm \overline{1},\pm 1, \cdots) \text{ or }(\cdots, \pm 1,\pm \overline 1, \cdots),$$
        which means that $\Pi_0$ looks like $(\cdots, \pm(\delta_m-\vep_1), \pm(\vep_1+\heartsuit), \cdots)$ or  $(\cdots, \pm(\vep_1-\delta_m), \pm(\delta_m+\diamondsuit), \cdots)$ with $\heartsuit,\diamondsuit\in \{\pm\vep_i\mid i\in I(m|n)\}$ such that $\vep_1+\heartsuit$ and $\delta_1+\diamondsuit$ are roots,
        then  $\Pi_1$ corresponds to the sequence $$(\cdots ,\pm 1,\pm \overline{m},\cdots) \text{ or }(\cdots, \pm \overline m, \pm1,\cdots),$$
         respectively.  This means that we only exchange the positions of $\pm\overline m$ and $\pm1$, all others do not change.

    \item[(2)] If $\theta=\delta_{{i}}-\delta_{{i+1}}$
    and consequently the defining sequence of $\Pi_0$ can be written as
\begin{itemize}
\item[(Case 2.1)]
       $(\cdots ,\pm\bar{i},\cdots,\pm \overline{(i+1)}, \cdots) \text{ or }(\cdots, \pm\overline{(i+1)},\cdots,\pm\bar{i}, \cdots)$; or
  \item[(Case 2.2)] $(\cdots ,\pm\bar{i},\cdots,\mp \overline{(i+1)}, \cdots) \text{ or }(\cdots, \mp\overline{(i+1)},\cdots,\pm\bar{i}, \cdots)$,
\end{itemize}
        then in (Case 2.1),  $\Pi_1$ corresponds to the sequence $$(\cdots ,\pm \overline{(i+1)},\cdots,\pm \bar{i},\cdots) \text{ or }(\cdots, \pm\bar{i},\cdots,\pm\overline{(i+1)}, \cdots),$$
         respectively,  which  means that we only exchange the positions of $\pm\bar{i}$ and $\pm\overline{(i+1)}$, all others do not change;
          In (Case 2.2),  $\Pi_1$ corresponds to the sequence $$(\cdots ,\pm \overline{(i+1)},\cdots,\mp \bar{i},\cdots) \text{ or }(\cdots, \mp\bar{i},\cdots,\pm\overline{(i+1)}, \cdots),$$
         respectively,  which  means that we exchange the positions of $\pm\bar{i}$ and $\mp\overline{(i+1)}$, and simultaneously change both signs, all others do not change.

    \item[(3)] If $\widehat r_\theta= \widehat r_{-2\delta_m}= \widehat r_{2\delta_m}$ and consequently the defining sequence of\, $\Pi_0$ can be written as $(\cdots,\pm \overline m,\cdots)$, then $\Pi_1$ corresponds to the sequence $(\cdots,\mp \overline m,\cdots)$.  This  means that we only change the sign of $\pm\overline m$,   all others do not change.
     \item[(4)] If $\widehat r_\theta= \widehat r_{\vep_1}$, and consequently the defining sequence of $\Pi_0$ can be written as $(\cdots, \pm 1,\cdots)$, then $\Pi_1$ corresponds to the sequence $(\cdots,\mp 1,\cdots)$. This  means that we only change the sign of $\pm 1$,   all others do not change.
        \end{itemize}
\end{proposition}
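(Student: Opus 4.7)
The plan is to verify the four cases by combining Lemma \ref{fun lemm} (for the odd reflection) with the elementary linear action of ordinary reflections on $\hhh^*$, then translate the result back into the defining sequence via the correspondence of Lemma \ref{lem: lemma 2.11 for type B}. The underlying reason the description takes this simple form is that Lemma \ref{lem: lemma 2.10 for type B}(2) already guarantees that $\Pi_1$ stays in the form (\ref{eq: general fs form})${}_\spadesuit$, so it is uniquely reconstructed from the index-and-sign data recorded by its defining sequence. The whole argument parallels Proposition \ref{app: gen seq} for type $D(m|n)$, but simplifies because $\mathfrak{spo}(2m|3)$ has no $\vep_{n-1}+\vep_n$ root and the short root $\pm\vep_1$ plays the role of a singleton tail.

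For case (2), the ordinary reflection $\widehat r_{\vep_{\bar i}-\vep_{\overline{i+1}}}$ acts on $\hhh^*$ by transposing $\vep_{\bar i}$ and $\vep_{\overline{i+1}}$, and analogously for $\vep_i-\vep_{i+1}$. Since the action on each simple root of $\Pi_0$ is coordinatewise, one reads off directly that (Case 2.1) corresponds to both occurrences having the same sign, while (Case 2.2) forces a simultaneous sign flip on the two affected entries (this flip being exactly what the order convention from \S\ref{sec: order conv after odd general}, combined with the form (\ref{eq: general fs form})${}_\spadesuit$, demands). For case (3), $\widehat r_{2\delta_m}$ acts by $\delta_m\mapsto -\delta_m$ and fixes all other $\vep_{\bar j}, \vep_k$; translated into the sequence this is the sign flip $\pm\overline m \mapsto \mp\overline m$. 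For case (4), $\widehat r_{\vep_1}$ is the sign flip on $\vep_1$, giving $\pm 1\mapsto \mp 1$. In both cases, Lemma \ref{lem: lemma 2.10 for type B} ensures the output is again of the form (\ref{eq: general fs form})${}_\spadesuit$, so no further bookkeeping is needed.

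The main work, and the only genuinely subtle step, is case (1): the odd reflection $\widehat r_{\delta_m-\vep_1}$. Writing $\theta=\delta_m-\vep_1$, Lemma \ref{fun lemm} tells us that $\theta$ is replaced by $-\theta$ and every other simple root $\alpha\in\Pi_0$ with $(\alpha,\theta)\ne 0$ is replaced by $\alpha+\theta$. Because $\Pi_0$ is of the form (\ref{eq: general fs form})${}_\spadesuit$, the initial term $\delta_m$ and the rear term $\vep_1$ of $\theta$ each appear in at most one neighboring root, namely the one whose rear term is $-\delta_m$ (respectively the one whose initial term is $\vep_1$). A direct computation in the two mirrored subcases $(\cdots,\pm\overline m,\pm 1,\cdots)$ and $(\cdots,\pm 1,\pm\overline m,\cdots)$ shows that the shifts $\alpha\mapsto\alpha+\theta$ convert the pair of neighbors precisely so that, after re-ordering by the convention in \S\ref{sec: order conv after odd general}, the only net change in the sequence is the swap of $\pm\overline m$ and $\pm 1$. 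The anticipated obstacle is to confirm that no extra sign corrections appear here, and this is precisely where the fact (established inside the proof of Lemma \ref{lem: lemma 2.10 for type B}) that configurations like $(\cdots,\mp\overline m,\pm 1)$ at the tail cannot occur for $n=1$ is needed, so no analogue of the D-type (Case 1.1) ever arises. With those observations in place, every sequence transformation asserted in (1)--(4) follows from routine coordinate tracking. \qed
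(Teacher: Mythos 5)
Your proposal is correct and follows essentially the same route as the paper: the paper gives no separate proof of Proposition \ref{app: type B seq n=1} but presents it as a reformulation of Lemmas \ref{lem: lemma 2.10 for type B} and \ref{lem: lemma 2.11 for type B}, whose proofs carry out exactly the case analysis you describe (coordinatewise action of the even reflections for (2)--(4), Lemma \ref{fun lemm} plus the form $(\ref{eq: general fs form})_\spadesuit$ for the odd reflection in (1)). You also correctly isolate the one genuinely delicate point, namely that the type-$D$ tail configuration $(\cdots,\mp\overline m,\pm 1)$ cannot arise when $n=1$, which is precisely what the paper's proof of Lemma \ref{lem: lemma 2.10 for type B} uses (the odd root can never occupy the last position since the last root is a singleton $\pm\vep_{\widetilde i}$).
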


\subsubsection{} By summing up Propositions \ref{app: type B seq} and \ref{app: type B seq n=1} along with Lemma \ref{lem: lemma 2.11 for type B},  we have established   a correspondence between fundamental systems and their defining sequences for type $B(m|n)$.

\begin{theorem}\label{prop: presenting seq type B} (Defining-sequence Theorem) The set of fundamental systems of $\ggg=\mathfrak{spo}(2m|2n+1)$ with $n>0$ are in a one-to-one correspondence with the set of defining sequences.  {Such a one-to-one correspondence is $\widehat W$-equivariant in the sense that  it is compatible with  $\widehat W$-action.}
\end{theorem}

{
\begin{proof} It follows from the similar arguments to Theorem \ref{prop: presenting seq type D}.
\end{proof}
}

\subsection{Weyl group for $\mathfrak{spo}(2m|2n+1)$ and its Coxeter graph} By definition, {{the (ordinary) Weyl group  $\scrw(\mathfrak{spo}(2m|2n+1))$}} of $\mathfrak{spo}(2m|2n+1)$ is exactly the Weyl group of $\mathfrak{g}_{\bar{0}}=\mathfrak{sp}(2m)\oplus\mathfrak{so}(2n+1)$, which is isomorphic to $(\mathbb{Z}_{2}^m\rtimes\mathfrak{S}_{m})\times(\mathbb{Z}_{2}^n\rtimes\mathfrak{S}_{n})$.
Precisely,
\begin{align*}
\scrw(\mathfrak{spo}(2m|2n+1))=\langle& \widehat r_{\theta}\mid \theta=\delta_{i}-\delta_{i+1}, \text{~or~} \theta=2\delta_{1}, \text{~or~} \theta=\varepsilon_{k}-\varepsilon_{k+1}, \text{~or~} \theta=\varepsilon_{n},\cr
 &1\leq i\leq m-1, 1\leq k\leq n-1 , \text{~with relations}\rangle
 \end{align*}
where the relations are just the defining relations of corresponding generators in $(\mathbb{Z}_{2}^m\rtimes\mathfrak{S}_{m})
 \times(\mathbb{Z}_{2}^n\rtimes\mathfrak{S}_{n})$.

The Coxeter graph of the Weyl group for $\mathfrak{spo}(2m|2n+1)$ consists of the following two separated sub-graphs:

\begin{center}
	\begin{tikzpicture}
		
		\node[circle,draw,minimum size=8pt,inner sep=0pt,fill=white] (r1) at (-4,1) {};
		
		\node[circle,draw,minimum size=8pt,inner sep=0pt,fill=white] (r2) at (-3,1) {};
		
		\node[circle,draw,minimum size=8pt,inner sep=0pt,fill=white] (r3) at (-2,1) {};
		
		\node[circle,draw,minimum size=8pt,inner sep=0pt,fill=white] (r4) at (-1,1) {};

		\draw (r1) --node[above=2pt] {\footnotesize 4} (r2);
		\draw (r2) --  (r3);
		\node at(-1.5,1){ $\cdots$};
		
		\node[below=3pt] at (r1) {$\widehat r_{\bar 1}$};
		\node[below=3pt] at (r2) { $\widehat r_{\overline{12}}$};
		\node[below=3pt] at (r3) {$\widehat r_{\overline{23}}$};
		\node[below=3pt] at (r4) {$\widehat r_{\overline{m-1,m}}$};
		
	\end{tikzpicture}
\end{center}

and

\begin{center}
	\begin{tikzpicture}
		
		\node[circle,draw,minimum size=8pt,inner sep=0pt,fill=white] (r1) at (0,1) {};
	
		\node[circle,draw,minimum size=8pt,inner sep=0pt,fill=white] (r2) at (1,1) {};
		
		\node[circle,draw,minimum size=8pt,inner sep=0pt,fill=white] (r3) at (2,1) {};
		
		\node[circle,draw,minimum size=8pt,inner sep=0pt,fill=white] (r4) at (3,1) {};

		\draw (r1) --(r2);
		\node at(1.5,1){ $\cdots$};
		\draw (r3) --  node[above=2pt] {\footnotesize 4} (r4);
		
		\node[below=3pt] at (r1) {$\widehat r_{1 2}$};
		\node[below=3pt] at (r2) {$\widehat r_{23}$};
		\node[below=3pt] at (r3) {$\widehat r_{n-1, n}$};
		\node[below=3pt] at (r4) {$\widehat r_{n}$};
		
	\end{tikzpicture}
\end{center}

\subsection{The super Weyl group for $\mathfrak{spo}(2m|2n+1)$ and its Coxeter graph: exceptional cases} At first, we have to separately deal with some exceptional cases $\mathfrak{spo}(2m|2n+1)$ with $n\geq1$ and $m+n<4$.

\subsubsection{$\mathfrak{spo}(2|3)$}\label{e5.2} For $\mathfrak{spo}(2|3)$, there are 8 fundamental systems, they are listed in order
$$\{\varepsilon_{\bar{1}}-\varepsilon_1,\varepsilon_1\},
\{\varepsilon_{\bar{1}}+\varepsilon_1,-\varepsilon_1\},
\{-\varepsilon_{\bar{1}}-\varepsilon_1,\varepsilon_1\},
\{-\varepsilon_{\bar{1}}+\varepsilon_1,-\varepsilon_1\},$$$$
\{\varepsilon_{\bar{1}}-\varepsilon_1,-\varepsilon_{\bar{1}}\},
\{\varepsilon_{\bar{1}}+\varepsilon_1,-\varepsilon_{\bar{1}}\},
\{-\varepsilon_{\bar{1}}-\varepsilon_1,\varepsilon_{\bar{1}}\},
\{-\varepsilon_{\bar{1}}+\varepsilon_1,\varepsilon_{\bar{1}}\}.$$
The corresponding sequences of these  fundamental systems are defined to be
 $$\bar 1 1,\bar 1 -1, -\bar 1 1, -\bar 1 -1,  -1-\bar 1, 1-\bar 1,-1\bar 1, 1\bar 1.$$
Parametrize  the above fundamental systems in order by  $1,2,3,4,5,6,7,8$ respectively.
Similar to Example \ref{gl12},  we get $$\widehat r_{\bar{1}1} = (18)(45), \widehat r'_{\bar{1}1} = (27)(36), \widehat r_{\bar{1}} = (13)(24)(57)(68),  \widehat r_{1} = (12)(34)(56)(78).$$

 As $\widehat r_{\bar{1}}\cdot \widehat r_{\bar{1}1} =\widehat r'_{\bar{1}1}\cdot \widehat r_{\bar{1}}$, we have that the set of generators of $\widehat{W}(\mathfrak{spo}(2|3))$ is $$\{a=\widehat r_{\bar{1}}, b=\widehat r_{1}, c=\widehat r_{\bar{1}1}\}.$$

 Using the software SageMath, one can get the Coxeter group
 $$\wsc(\mathfrak{spo}(2|3))=\langle a,b,c\mid a^2=1, b^2=1, c^2=1, (a  b)^2=1, (a  c)^4=1, (b  c)^{4}=1 \rangle.$$
 Then the Coxeter graph of $\wsc(\mathfrak{spo}(2|3)$ is

 \begin{center}
 	\begin{tikzpicture}
 		
 		\node[circle,draw,minimum size=8pt,inner sep=0pt,fill=white] (r1) at (-2,1) {};

 		\node[circle,draw,minimum size=8pt,inner sep=0pt,fill=white] (r2) at (0,1) {};
 		\draw (r2.135) -- (r2.315);
 		\draw(r2.225) -- (r2.45);
 		
 		\node[circle,draw,minimum size=8pt,inner sep=0pt,fill=white] (r3) at (2,1) {};

 		\draw (r1) -- node[above=2pt] {\footnotesize 4} (r2);
 		\draw (r2) --  node[above=2pt] {\footnotesize 4} (r3);
 		
 		\node[below=3pt] at (r1) {$\widehat r_{\bar{1}}$};
 		\node[below=3pt] at (r2) {$\widehat r_{\bar{1}1}$};
 		\node[below=3pt] at (r3) { $\widehat r_{1 }$};
 		
 	\end{tikzpicture}
 \end{center}

\subsubsection{$\mathfrak{spo}(2|5)$}\label{e5.3} 	
	For  $\mathfrak{spo}(2|5)$, there are $48$ fundamental systems.
	As
	$\widehat r_{\varepsilon_{\bar 1} }\cdot \widehat r_{\varepsilon_{\bar 1} - \varepsilon_1} = \widehat r_{\varepsilon_{\bar 1} +\varepsilon_1}\cdot \widehat r_{\varepsilon_{\bar 1}}$,
	$\widehat r_{\varepsilon_1}\cdot \widehat r_{\varepsilon_1+\varepsilon_2} = \widehat r_{\varepsilon_1-\varepsilon_2}\cdot \widehat r_{\varepsilon_1}$,
	$\widehat r_{\varepsilon_2} = \widehat r_{\varepsilon_1-\varepsilon_2}\cdot \widehat r_{\varepsilon_1}\cdot \widehat r_{\varepsilon_1-\varepsilon_2}$,
	$\widehat r_{\varepsilon_1+\varepsilon_2} =\widehat r_{\varepsilon_1}\cdot  \widehat r_{\varepsilon_1-\varepsilon_2}\cdot \widehat r_{\varepsilon_1}$,
	$\widehat r_{\varepsilon_{\bar 1}-\varepsilon_1}\cdot \widehat r_{\varepsilon_1 - \varepsilon_2} = \widehat r_{\varepsilon_1 - \varepsilon_2}\cdot \widehat r_{\varepsilon_{\bar 1}-\varepsilon_2}$, 	
we can get	the set of generators of $\widehat{W}(\mathfrak{spo}(2|5))$ to be $$\{a=\widehat r_{\varepsilon_{\bar 1} - \varepsilon_1}, b=\widehat r_{\varepsilon_1 - \varepsilon_2}, c=\widehat r_{\varepsilon_{\bar 1}}, d=\widehat r_{\varepsilon_2}\}.$$
		
	Using SageMath, one can get the Coxeter group
	$$\wsc(\mathfrak{spo}(2|5))=\langle a,b,c,d\mid a^2=b^2=c^2=d^2=1, (a  b)^{12}=(c  a)^4=(b  d)^{4}=1,$$$$(a  d)^2 = (b  c)^2 = (c  d)^2 =1 \rangle.$$
	The Coxeter graph of $\wsc(\mathfrak{spo}(2|5))$ is

\begin{center}
	\begin{tikzpicture}
		
		\node[circle,draw,minimum size=8pt,inner sep=0pt,fill=white] (r1) at (-2,1) {};
		
		\node[circle,draw,minimum size=8pt,inner sep=0pt,fill=white] (r2) at (0,1) {};		
		\draw (r2.135) -- (r2.315);
		\draw(r2.225) -- (r2.45);
		
		\node[circle,draw,minimum size=8pt,inner sep=0pt,fill=white] (r3) at (2,1) {};
		
		\node[circle,draw,minimum size=8pt,inner sep=0pt,fill=white] (r4) at (4,1) {};

		\draw (r1) -- node[above=2pt] {\footnotesize 4} (r2);
		\draw (r2) -- node[above=2pt] {\footnotesize 12} (r3);
		\draw (r3) -- node[above=2pt] {\footnotesize 4} (r4);
		
		\node[below=3pt] at (r1) {$\widehat r_{\bar{1}}$};
		\node[below=3pt] at (r2) {$\widehat r_{\bar{1}1}$};
		\node[below=3pt] at (r3) {$\widehat r_{1 2}$};
		\node[below=3pt] at (r4) { $\widehat r_{2}$};
		
	\end{tikzpicture}
\end{center}

\subsubsection{$\mathfrak{spo}(4|3)$} \label{e5.4}
	
	For $\mathfrak{spo}(4|3)$, there are $48$ fundamental systems.
	The set of generators of $\widehat{W}(\mathfrak{spo}(4|3))$ is $$\{ a=\widehat r_{\varepsilon_{\bar 1} - \varepsilon_{\bar 2}}, b=\widehat r_{\varepsilon_{\bar 2} - \varepsilon_1}, c=\widehat r_{\varepsilon_{\bar 1}}, d=\widehat r_{\varepsilon_1}\}.$$
	
	Using the computer software SageMath, we can get the Coxeter group
$$\wsc(\mathfrak{spo}(4|3))=\langle a,b,c,d\mid a^2=b^2=c^2=d^2=1, (a  b)^{12}=(c  a)^4=(b  d)^{4}=1,$$$$(a  d)^2 = (b  c)^2 = (c  d)^2 =1 \rangle.$$
The Coxeter graph of $\wsc(\mathfrak{spo}(4|3))$ is

	\begin{center}
		\begin{tikzpicture}
			
			\node[circle,draw,minimum size=8pt,inner sep=0pt,fill=white] (r1) at (-2,1) {};
			
			\node[circle,draw,minimum size=8pt,inner sep=0pt,fill=white] (r2) at (0,1) {};
			
			\node[circle,draw,minimum size=8pt,inner sep=0pt,fill=white] (r3) at (2,1) {};
			\draw (r3.135) -- (r3.315);
			\draw(r3.225) -- (r3.45);
			
			\node[circle,draw,minimum size=8pt,inner sep=0pt,fill=white] (r4) at (4,1) {};

			\draw (r1) -- node[above=2pt] {\footnotesize 4} (r2);
			\draw (r2) -- node[above=2pt] {\footnotesize 12}(r3);
			\draw (r3) -- node[above=2pt] {\footnotesize 4}(r4);
			
			\node[below=3pt] at (r1) {$\widehat r_{\bar{1}}$};
			\node[below=3pt] at (r2) {$\widehat r_{\bar{1}\bar{2}}$};
			\node[below=3pt] at (r3) { $\widehat r_{\bar 2 1}$};
			\node[below=3pt] at (r4) { $\widehat r_{1}$};
			
		\end{tikzpicture}
	\end{center}

\subsection{General $\mathfrak{spo}(2m|2n+1)$ ($m,n>0$)}
Based on Proposition \ref{app: type B seq}  and Theorem \ref{prop: presenting seq type B} along with the above examples, the Coxeter graph for  $\mathfrak{spo}(2m|2n+1)$ is described below.

\begin{theorem} \label{thm: 6.5}
The following statements hold for  $\mathfrak{spo}(2m|2n+1)$ with $m, n>0$.
\begin{itemize}
\item[(1)] Suppose   $m + n\geq 4$.
For the standard fundamental system $\Pi$, the Coxeter graph of $\wsc$ for $\mathfrak{spo}(2m|2n+1))$ is

\begin{center}
	\begin{tikzpicture}
		
		\node[circle,draw,minimum size=8pt,inner sep=0pt,fill=white] (r1) at (-4,1) {};
		
		\node[circle,draw,minimum size=8pt,inner sep=0pt,fill=white] (r2) at (-3,1) {};
		
		\node[circle,draw,minimum size=8pt,inner sep=0pt,fill=white] (r3) at (-2,1) {};
		
		\node[circle,draw,minimum size=8pt,inner sep=0pt,fill=white] (r4) at (-1,1) {};
		
		\node[circle,draw,minimum size=8pt,inner sep=0pt,fill=white] (r5) at (0,1) {};
		\draw (r5.135) -- (r5.315);
		\draw(r5.225) -- (r5.45);
		
		\node[circle,draw,minimum size=8pt,inner sep=0pt,fill=white] (r6) at (1,1) {};
		
		\node[circle,draw,minimum size=8pt,inner sep=0pt,fill=white] (r7) at (2,1) {};
		
		\node[circle,draw,minimum size=8pt,inner sep=0pt,fill=white] (r8) at (3,1) {};
		
		\node[circle,draw,minimum size=8pt,inner sep=0pt,fill=white] (r9) at (4,1) {};

		\draw (r1) -- node[above=2pt] {\footnotesize 4}(r2);
		\draw (r2) -- (r3);
		\node at(-1.5,1){ $\cdots$};
		\draw (r4) -- node[above=2pt] {\footnotesize 12}(r5);
		\draw (r5) -- node[above=2pt] {\footnotesize 12}(r6);
		\draw (r6) -- (r7);
		\node at(2.5,1){ $\cdots$};
		\draw (r8) -- node[above=2pt] {\footnotesize 4}(r9);

		\node[below=3pt] at (r1) {$\widehat r_{\bar 1}$};
		\node[below=3pt] at (r2) {$\widehat r_{\overline{12}}$};
		\node[below=3pt] at (r3) {$\widehat r_{\overline{23}}$};
		\node[below=3pt] at (r4) {$\widehat r_{\overline{m-1,m}}$};
		\node[below=3pt] at (r5) { $\widehat r_{\overline{m}1}$};
		\node[below=3pt] at (r6) {$\widehat r_{1 2}$};
		\node[below=3pt] at (r7) { $\widehat r_{23}$};
		\node[below=3pt] at (r8) { $\widehat r_{n-1, n}$};
		\node[below=3pt] at (r9) { $\widehat r_{n}$};
		
	\end{tikzpicture}
\end{center}

\item[(2)] Suppose $m+n<4$.
\begin{itemize}
\item[(2.1)] In the case $\mathfrak{spo}(2|3)$, the Coxeter  graph is

\begin{center}
	\begin{tikzpicture}
		
		\node[circle,draw,minimum size=8pt,inner sep=0pt,fill=white] (r1) at (-2,1) {};

		\node[circle,draw,minimum size=8pt,inner sep=0pt,fill=white] (r2) at (0,1) {};
		\draw (r2.135) -- (r2.315);
		\draw(r2.225) -- (r2.45);
		
		\node[circle,draw,minimum size=8pt,inner sep=0pt,fill=white] (r3) at (2,1) {};

		\draw (r1) -- node[above=2pt] {\footnotesize 4} (r2);
		\draw (r2) -- node[above=2pt] {\footnotesize 4}(r3);
		
		\node[below=3pt] at (r1) {$\widehat r_{\bar{1}}$};
		\node[below=3pt] at (r2) {$\widehat r_{\bar{1}1}$};
		\node[below=3pt] at (r3) {$\widehat r_{1 }$};
		
	\end{tikzpicture}
\end{center}

\item[(2.2)] In the case $\mathfrak{spo}(2|5)$, the Coxeter  graph is

\begin{center}
	\begin{tikzpicture}
		
		\node[circle,draw,minimum size=8pt,inner sep=0pt,fill=white] (r1) at (-2,1) {};

		\node[circle,draw,minimum size=8pt,inner sep=0pt,fill=white] (r2) at (0,1) {};
		\draw (r2.135) -- (r2.315);
		\draw(r2.225) -- (r2.45);
		
		\node[circle,draw,minimum size=8pt,inner sep=0pt,fill=white] (r3) at (2,1) {};

		\node[circle,draw,minimum size=8pt,inner sep=0pt,fill=white] (r4) at (4,1) {};		
		
		\draw (r1) -- node[above=2pt] {\footnotesize 4} (r2);
		\draw (r2) -- node[above=2pt] {\footnotesize 12} (r3);
		\draw (r3) --  node[above=2pt] {\footnotesize 4}(r4);
		
		\node[below=3pt] at (r1) {$\widehat r_{\bar{1}}$};
		\node[below=3pt] at (r2) {$\widehat r_{\overline{1}1}$};
		\node[below=3pt] at (r3) {$\widehat r_{12}$};
		\node[below=3pt] at (r4) { $\widehat r_{2}$};
		
	\end{tikzpicture}
\end{center}

\item[(2.3)] In the case $\mathfrak{spo}(4|3)$, the Coxeter  graph is

\begin{center}
	\begin{tikzpicture}
		
		\node[circle,draw,minimum size=8pt,inner sep=0pt,fill=white] (r1) at (-2,1) {};

		\node[circle,draw,minimum size=8pt,inner sep=0pt,fill=white] (r2) at (0,1) {};
		
		\node[circle,draw,minimum size=8pt,inner sep=0pt,fill=white] (r3) at (2,1) {};
		\draw (r3.135) -- (r3.315);
		\draw(r3.225) -- (r3.45);

		\node[circle,draw,minimum size=8pt,inner sep=0pt,fill=white] (r4) at (4,1) {};		
		
		\draw (r1) -- node[above=2pt] {\footnotesize 4} (r2);
		\draw (r2) -- node[above=2pt] {\footnotesize 12} (r3);
		\draw (r3) --  node[above=2pt] {\footnotesize 4}(r4);
		
		\node[below=3pt] at (r1) {$\widehat r_{\bar{1}}$};
		\node[below=3pt] at (r2) {$\widehat r_{\overline{12}}$};
		\node[below=3pt] at (r3) {$\widehat r_{\bar 2 1}$};
		\node[below=3pt] at (r4) { $\widehat r_{1}$};
		
	\end{tikzpicture}
\end{center}

\end{itemize}
\end{itemize}
\end{theorem}

\begin{proof} For Part (2) with $n>0$ and $m+n< 4$, we have $m=1, n = 1$ or  $m=1, n = 2$ and $m=2, n = 1$. These are what are dealt with in Examples \ref{e5.2}, \ref{e5.3} and \ref{e5.4}. So we only need to prove Part (1).

Suppose $m+n\geq 4$ and $m,n>0$ in the following.
Note that the Coxeter graph of  the subsystem $(\scrw, \spo)$ is already known (see \cite[Theorem 1.3.3]{GP}). It suffices for us to compute $m_{xy}$ involving $\widehat r_{\overline{m}1}$ as appearing in \S\ref{sec: fun sys} for Proposition \ref{prop: gen system}. Keep in mind that for a given defining sequence $\pi$, the action of $\widehat r_{\overline m 1}$ on $\pi$ is nontrivial only when $\sharp\overline m$ is adjacent to $\sharp 1$ ($\sharp\in\{\pm1\}$). The nontrivial action is just to exchange the position of $\sharp \overline m$ and $\sharp 1$. Now we proceed with arguments in several steps.


(I)	For $\widehat r_{\overline{i,i+1}}$ $(1\leq i \leq m-2)$, in a sequence, it exchanges the positions of the numbers $\bar{i}$ and $\overline{i+1}$,  but the positions of $m$ and $1$ do not change. Then we can get $\widehat r_{\overline{i,i+1}}\cdot \widehat r_{\overline{m}1}= \widehat r_{\overline{m}1}\cdot \widehat r_{\overline{i,i+1}}$.  This implies $(\widehat r_{\overline{i,i+1}}\cdot \widehat r_{\overline{m}1})^2=1, 1\leq i \leq m-2.$

(II) For $\widehat r_{\bar{1}}$,  if $\overline{m}\neq \bar{1}$, we get   $(\widehat r_{\bar{1}}\cdot \widehat r_{\overline{m}1})^2=1.$
	
(III)	For $\widehat r_{j,j+1}$ $(2\leq j \leq n-1)$, similar to the above cases, we get $(\widehat r_{j,j+1}\cdot \widehat r_{\overline{m}1})^2=1.$

(VI)
For $\widehat r_{n}$, if $1\neq n$, we have $( \widehat r_{\overline{m}1} \cdot \widehat r_{n})^2=1.$
	
(V) For $\widehat r_{12}$, similarly to the proof of Theorem \ref{Thm5.3}, we  only need to consider the following two cases.

(Case V.1) $\pm2$ is adjacent to the pair $\overline m,1$ in the sequence.  In this case, the sequence looks like $(\cdots,  \overline{m},1, \pm2\cdots)$ or  $(\cdots,  \pm2, \overline{m},1,\cdots)$.  (we don't care the specific position of $\pm2$. Computations for both are almost the same. The results can be shown to be the same).  We have
 \begin{align*}
&(\cdots,\overline{m},1,2,\cdots)\overset
	{\widehat r_{\overline{m}1}}{\longrightarrow}(\cdots, 1,\overline{m},2,\cdots)\overset
	{\widehat r_{12}}{\longrightarrow}\cr
&(\cdots,2,\overline{m},1,\cdots)\overset
	{\widehat r_{\overline{m}1}}{\longrightarrow}(\cdots,2, 1,\overline{m},\cdots)\overset
	{\widehat r_{12}}{\longrightarrow}\cr
&(\cdots,1, 2,\overline{m},\cdots)\overset
	{\widehat r_{\overline{m}1}}{\longrightarrow}(\cdots,1, 2,\overline{m},\cdots)\overset
	{\widehat r_{12}}{\longrightarrow}\cr
&(\cdots,2, 1,\overline{m},\cdots)\overset
	{\widehat r_{\overline{m}1}}{\longrightarrow}(\cdots,2,\overline{m},1,\cdots)\overset
	{\widehat r_{12}}{\longrightarrow}\cr
&(\cdots,1,\overline{m},2,\cdots)\overset
	{\widehat r_{\overline{m}1}}{\longrightarrow}(\cdots,\overline{m},1,2, \cdots)\overset
	{\widehat r_{12}}{\longrightarrow}\cr
&(\cdots,\overline{m},2,1,\cdots)\overset
	{\widehat r_{\overline{m}1}}{\longrightarrow}(\cdots,\overline{m},2,1,\cdots)\overset
	{\widehat r_{12}}{\longrightarrow}\cr
&(\cdots,\overline{m},1,2,\cdots),
\end{align*}
{ which shows that the order of the action of $\widehat r_{12}\cdot \widehat r_{\overline{m}1}$ on such sequences is $6$,  and
 \begin{align*}
&(\cdots,\overline{m},1,-2,\cdots)\overset
	{\widehat r_{\overline{m}1}}{\longrightarrow}(\cdots, 1,\overline{m},-2,\cdots)\overset
	{\widehat r_{12}}{\longrightarrow}\cr
&(\cdots, 2,\overline{m}, -1,\cdots)\overset
	{\widehat r_{\overline{m}1}}{\longrightarrow}(\cdots,2, \overline{m},-1\cdots)\overset
	{\widehat r_{12}}{\longrightarrow}\cr
&(\cdots,1,\overline{m},-2\cdots)\overset
	{\widehat r_{\overline{m}1}}{\longrightarrow}(\cdots, \overline{m},1,-2\cdots)\overset
	{\widehat r_{12}}{\longrightarrow}\cr
&(\cdots,\overline{m}, 2,-1\cdots)\overset
	{\widehat r_{\overline{m}1}}{\longrightarrow}(\cdots,\overline{m},2,-1,\cdots)\overset
	{\widehat r_{12}}{\longrightarrow}\cr
&(\cdots,\overline{m},1,-2,\cdots),
\end{align*}
 which shows that  the order of the action of $\widehat r_{12}\cdot \widehat r_{\overline{m}1}$ on such sequences  is $4$.

(Case V.2) $\pm2$ is not adjacent to the pair  $\overline{m}, 1$.  In this case, the sequence looks like
{
$$(\cdots, \pm2, \overline{m-1},-\overline{m}, -1,\cdots)$$
}
(we don't care what the roots  between $\pm2$ and the pair are). We have
\begin{align*}
&(\cdots,2, \overline{m-1}, \overline{m}, 1,\cdots)\overset
	{\widehat r_{\overline{m}1}}{\longrightarrow}(\cdots,2,\overline{m-1}, 1,\overline{m},\cdots)\overset
	{\widehat r_{12}}{\longrightarrow}\cr
&(\cdots, 1,\overline{m-1},2,\overline{m},\cdots)\overset
	{\widehat r_{\overline{m}1}}{\longrightarrow}
(\cdots, 1,\overline{m-1},2,\overline{m},\cdots)\overset
	{\widehat r_{12}}{\longrightarrow}\cr
&(\cdots, 2,\overline{m-1}, 1, \overline{m},\cdots)\overset
	{\widehat r_{\overline{m}1}}{\longrightarrow}
(\cdots, 2,\overline{m-1}, \overline{m},1, \cdots)\overset
	{\widehat r_{12}}{\longrightarrow}\cr
&(\cdots, 1,  \overline{m-1}, \overline{m},2,\cdots)\overset
	{\widehat r_{\overline{m}1}}{\longrightarrow}
(\cdots, 1,\overline{m-1}, \overline{m},2,\cdots)\overset
	{\widehat r_{12}}{\longrightarrow}\cr
 &(\cdots,2, \overline{m-1}, \overline{m},1,\cdots),
 \end{align*}
 which shows that the  order of the action of $\widehat r_{12}\cdot \widehat r_{\overline{m}1}$ on such sequences is $4$, and
 \begin{align*}
&(\cdots,-2, \overline{m-1}, \overline{m}, 1,\cdots)\overset
	{\widehat r_{\overline{m}1}}{\longrightarrow}(\cdots,-2,\overline{m-1}, 1,\overline{m},\cdots)\overset
	{\widehat r_{12}}{\longrightarrow}\cr
&(\cdots, -1,\overline{m-1},2,\overline{m},\cdots)\overset
	{\widehat r_{\overline{m}1}}{\longrightarrow}
(\cdots, -1,\overline{m-1},2,\overline{m},\cdots)\overset
	{\widehat r_{12}}{\longrightarrow}\cr
&(\cdots, -2,\overline{m-1}, 1, \overline{m},\cdots)\overset
	{\widehat r_{\overline{m}1}}{\longrightarrow}
(\cdots, -2,\overline{m-1}, \overline{m},1, \cdots)\overset
	{\widehat r_{12}}{\longrightarrow}\cr
&(\cdots, -1,  \overline{m-1}, \overline{m},2,\cdots)\overset
	{\widehat r_{\overline{m}1}}{\longrightarrow}
(\cdots, -1,\overline{m-1}, \overline{m},2,\cdots)\overset
	{\widehat r_{12}}{\longrightarrow}\cr
 &(\cdots,-2, \overline{m-1}, \overline{m},1,\cdots),
 \end{align*}
  which shows that the order of the action of $\widehat r_{12}\cdot \widehat r_{\overline{m}1}$ on such sequences is $4$.
 }

As there are $2^{m+n}\cdot(m+n)!$ positive systems, then the number of the  corresponding sequences is $2^{m+n}\cdot(m+n)!$ (cf. \S\ref{s5.1}). So when $m+n\geq 4$, $ \overline{m-1}, \overline{m},1, 2$ and $ 2, \overline{m-1}, -\overline{m},-1$ must appear in some sequences.  According to the above arguments,  the order of $\widehat r_{\overline{m}1}\cdot\widehat r_{12}$ is the least common multiple of $6$, $4$, $2$, which is equal to $12$.

(VII) Consider $\widehat r_{\overline {m-1,m}}\cdot \widehat r_{\overline{m}1}$.  By the same computations as the previous ones concerning  $\widehat r_{\overline{m}1}\cdot\widehat r_{12}$,  one can obtain that  the order of  $\widehat r_{\overline {m-1,m}}\cdot \widehat r_{\overline{m}1}$ is equal to $12$.

This completes the proof of Part (1).

According to the analysis in the beginning, we accomplish the proof.
\end{proof}


\section{Appendix : Proposals for Coxeter graphs for exceptional classical Lie superalgebras}\label{se:6}
\subsection{The root system and Weyl group for $D(2,1,\alpha)$} \subsubsection{The root system}
Recall that the Lie superalgebra $\ggg = D(2|1; \alpha)$ is a family of simple contragredient Lie superalgebras, which depends on a parameter $\alpha$.
There are isomorphisms of Lie superalgebras
with different parameters
$D(2|1; \alpha)\cong D(2,1;-1 -\alpha^{-1})\cong D(2|1; \alpha^{-1})$ when $\alpha$ is nonzero.

Let $\ggg=D(2,1;\alpha)$. Recall that $\ggg= \ggg_\bz\oplus \ggg_\bo$ with $\ggg_\bz\cong \mathfrak{sl}(2)^{\oplus 3}$ and as a $\ggg_\bz$-module, $\ggg_\bo\cong (\bbc^2)^{\oplus 2}$.
For a Cartan subalgebra $\hhh$ of $\ggg_\bz$, its linear dual $\hhh^*$ has a basis $\{\delta, \varepsilon_{1}, \varepsilon_{2} \}$ with $(\delta,\delta)=-(1+\alpha), (\varepsilon_{1},\varepsilon_{1})=1, (\varepsilon_{2},\varepsilon_{2})=\alpha, (\varepsilon_{1},\varepsilon_{2})=0$ and $(\delta,\varepsilon_{i})=0, i=1,2$ (see for example, \cite{K} or \cite{CSW} for the details).
Its root system
$\Phi=\Phi_{\bar{0}}\bigcup \Phi_{\bar{1}}$ is presented as
$$\{\pm2\delta,\pm2\varepsilon_{i}\}\cup
\{\pm\delta\pm\varepsilon_{1}\pm\varepsilon_{2}\}.$$
    We can take
     $$\Pi=\{\delta+\varepsilon_1+ \varepsilon_2, -2\varepsilon_1, -2\varepsilon_2\}$$
as the standard fundamental system of $D(2,1,\alpha)$.  Then the corresponding extended standard fundamental system is
   $$\widetilde \Pi=\{-2\delta\}\cup \{ \delta+\varepsilon_1+ \varepsilon_2, -2\varepsilon_1, -2\varepsilon_2\}.$$

\subsubsection{The Weyl group}
Recall that $\ggg_\bz$ admits the Weyl group $\langle \widehat r_{2\delta}\rangle\times  \langle \widehat r_{2\varepsilon_1}\rangle\times \langle \widehat r_{2\varepsilon_2}\rangle$ which is isomorphic to $\bbz_2\times \bbz_2\times \bbz_2$.

\subsection{Coxeter group for $D(2,1,\alpha)$ and its Coxeter graph}

\begin{proposal}\label{thm: excep d}
	The Coxeter graph of $\wsc$ for $D(2,1,\alpha)$  is	

\begin{center}
	\begin{tikzpicture}
		
		\node[circle,draw,minimum size=8pt,inner sep=0pt,fill=white] (r1) at (2,0) {};

		\node[circle,draw,minimum size=8pt,inner sep=0pt,fill=white] (r2) at (0,1) {};

		\node[circle,draw,minimum size=8pt,inner sep=0pt,fill=white] (r3) at (2,1) {};
		\draw (r3.135) -- (r3.315);
		\draw(r3.225) -- (r3.45);
		
		\node[circle,draw,minimum size=8pt,inner sep=0pt,fill=white] (r4) at (4,1) {};

		\draw (r2) -- node[above,sloped] {\footnotesize 12} (r3);
		\draw (r1) -- node[left] {\footnotesize 12} (r3);
		\draw (r3) -- node[above,sloped] {\footnotesize 12} (r4);
		
		\node[below=3pt] at (r1) {$\widehat r_{2\varepsilon_2} $};
		\node[below=3pt] at (r2) {$ \widehat r_{2\delta~~~~}$~~~~~~~};
		\node at(2.8,0.75){ $\widehat r_{\delta+\varepsilon_{1}+\varepsilon_2}$};
		\node[below=3pt] at (r4) {$\widehat r_{2\varepsilon_{1}}$};
		
	\end{tikzpicture}
\end{center}

\end{proposal}

\begin{expl} Note that the Coxeter graph of  the subsystem $(\scrw, \spo)$ is already known, which is just the one of three vertices mutually isolated. It suffices for us to compute $m_{xy}$ involving $\widehat r_{\delta+\varepsilon_{1}+\varepsilon_{2}}$ in \S\ref{sec: cox sys} associated
with Proposition \ref{prop: gen system}. 	
	We only need to calculate  the order of  $\widehat r_{\delta+\varepsilon_{1}+\varepsilon_{2}}\cdot \widehat r_{2\delta}$. The others can be  similarly fulfilled.
	
	We need to  calculate the actions of  $\widehat r_{\delta+\varepsilon_{1}+\varepsilon_{2}}\cdot \widehat r_{2\delta}$ on all the  fundamental systems, and get the order of $\widehat r_{\delta+\varepsilon_{1}+\varepsilon_{2}}\cdot \widehat r_{2\delta}$ on each fundamental system.  The least common multiple of these  orders is $12$. So the order of $\widehat r_{\delta+\varepsilon_{1}+\varepsilon_{2}}\cdot \widehat r_{2\delta}$ is $12$. So far, we can only  give the calculations of the following two fundamental systems. Denote $\widehat r_{\delta12}$ by $\widehat r_{\delta+\varepsilon_{1}+\varepsilon_{2}}$.
{We have
\begin{align*}
&\{ \delta-\varepsilon_{1}-\varepsilon_{2}, 2\varepsilon_{1},2\varepsilon_{2}\}\overset
	{\widehat r_{2\delta}}{\longrightarrow}\{ -\delta-\varepsilon_{1}-\varepsilon_{2}, 2\varepsilon_{1},2\varepsilon_{2}\}\overset
	{\widehat r_{\delta12}}{\longrightarrow}\cr
&\{\delta+\varepsilon_{1}+\varepsilon_{2},-\delta+\varepsilon_{1}-\varepsilon_{2} ,-\delta-\varepsilon_{1}+\varepsilon_{2}\} \overset
	{\widehat r_{2\delta}}{\longrightarrow}\{-\delta+\varepsilon_{1}+\varepsilon_{2},\delta+\varepsilon_{1}-\varepsilon_{2} ,\delta-\varepsilon_{1}+\varepsilon_{2}\}\overset
	{\widehat r_{\delta12}}{\longrightarrow} \cr &\{-\delta+\varepsilon_{1}+\varepsilon_{2},\delta+\varepsilon_{1}-\varepsilon_{2} ,\delta-\varepsilon_{1}+\varepsilon_{2}\}\overset
	{\widehat r_{2\delta}}{\longrightarrow} \{\delta+\varepsilon_{1}+\varepsilon_{2},-\delta+\varepsilon_{1}-\varepsilon_{2} ,-\delta-\varepsilon_{1}+\varepsilon_{2}\}\overset
	{\widehat r_{\delta12}}{\longrightarrow}\cr
 &\{ -\delta-\varepsilon_{1}-\varepsilon_{2}, 2\varepsilon_{1},2\varepsilon_{2}\}
\overset
	{\widehat r_{2\delta}}{\longrightarrow} \{ \delta-\varepsilon_{1}-\varepsilon_{2}, 2\varepsilon_{1},2\varepsilon_{2}\}\overset
	{\widehat r_{\delta12}}{\longrightarrow}\cr
 &\{ \delta-\varepsilon_{1}-\varepsilon_{2}, 2\varepsilon_{1},2\varepsilon_{2}\}.
\end{align*}
So the order of the action $\widehat r_{\delta+\varepsilon_{1}+\varepsilon_{2}}\cdot \widehat r_{2\delta}$ on this fundamental system is $4$.
}	

Furthermore, we have
{
\begin{align*}
&\{ 2\varepsilon_{1}, \delta-\varepsilon_{1}-\varepsilon_{2}, -2\delta\}\overset
	{\widehat r_{2\delta}}{\longrightarrow}\{2\varepsilon_{1}, -\delta-\varepsilon_{1}-\varepsilon_{2}, 2\delta\}\overset
	{\widehat r_{\delta12}}{\longrightarrow}\cr
&\{-\delta+\varepsilon_{1}-\varepsilon_{2},\delta+\varepsilon_{1}+\varepsilon_{2} ,\delta-\varepsilon_{1}-\varepsilon_{2}\} \overset
	{\widehat r_{2\delta}}{\longrightarrow}\{\delta+\varepsilon_{1}-\varepsilon_{2},-\delta+\varepsilon_{1}+\varepsilon_{2} ,-\delta-\varepsilon_{1}-\varepsilon_{2}\}\overset
	{\widehat r_{\delta12}}{\longrightarrow} \cr
&\{-2\varepsilon_{2},-2\delta,\delta+\varepsilon_{1}+\varepsilon_{2}\}
\overset
	{\widehat r_{2\delta}}{\longrightarrow} \{-2\varepsilon_{2},2\delta,-\delta+\varepsilon_{1}+\varepsilon_{2}\}\overset
	{\widehat r_{\delta12}}{\longrightarrow}\cr &\{-2\varepsilon_{2},2\delta,-\delta+\varepsilon_{1}+\varepsilon_{2}\}\overset
	{\widehat r_{2\delta}}{\longrightarrow} \{-2\varepsilon_{2},-2\delta,\delta+\varepsilon_{1} +\varepsilon_{2}\}\overset
	{\widehat r_{\delta12}}{\longrightarrow} \cr &\{\delta+\varepsilon_{1}-\varepsilon_{2},-\delta+\varepsilon_{1}+\varepsilon_{2} ,-\delta-\varepsilon_{1}-\varepsilon_{2}\}\overset
	{\widehat r_{2\delta}}{\longrightarrow} \{-\delta+\varepsilon_{1}-\varepsilon_{2},\delta+\varepsilon_{1}+\varepsilon_{2} ,\delta-\varepsilon_{1}-\varepsilon_{2}\}\overset
	{\widehat r_{\delta12}}{\longrightarrow} \cr
&\{2\varepsilon_{1}, -\delta-\varepsilon_{1}-\varepsilon_{2}, 2\delta\}\overset
	{\widehat r_{2\delta}}{\longrightarrow} \{2\varepsilon_{1}, \delta-\varepsilon_{1}-\varepsilon_{2}, -2\delta\}\overset
	{\widehat r_{\delta12}}{\longrightarrow} \cr
&\{2\varepsilon_{1}, \delta-\varepsilon_{1}-\varepsilon_{2}, -2\delta\}.
\end{align*}
}
 So the order of the action of  $\widehat r_{\delta+\varepsilon_{1}+\varepsilon_{2}}\cdot \widehat r_{2\delta}$ on this fundamental system is $6$.

{Summing up, we have that the order of $\widehat r_{\delta+\varepsilon_{1}+\varepsilon_{2}}\cdot \widehat r_{2\delta}$ is $12$.
According to the analysis in the beginning of the proof, the proposal does make sense. }
\end{expl}


\subsection{Root system for $F(4)$}{
Let $\ggg=F(4)$. Recall that
$\ggg_\bz\cong \mathfrak{sl}(2)\oplus \mathfrak{so}(7)$ with Cartan subalgebra $\hhh$ and $\ggg_\bo=\bbc^2\oplus \bbc^8$, as a $\ggg_\bz$-module. Here $\bbc^8$ is  the $8$-dimensional spin representation of $\mathfrak{so}(7)$. The root system of $\ggg$ can be
described via the basis  $\{\varepsilon_{1},\varepsilon_{2}, \varepsilon_{3}, \delta\}$ in $\hhh^*$ with $(\delta,\delta)=-3, (\varepsilon_{i},\varepsilon_{i})=1, (\varepsilon_{i},\varepsilon_{j})=0, (\delta,\varepsilon_{i})=0, i,j =1,2,3, i\neq j$.
Moreover,
$\Phi=\Phi_{\bar{0}}\bigcup \Phi_{\bar{1}}$ is precisely presented as
$$\{\pm\varepsilon_{i}\pm\varepsilon_{j},\pm\varepsilon_{i},\pm\delta\}\cup
\{\frac{1}{2}(\pm\varepsilon_{1}\pm\varepsilon_{2}
\pm\varepsilon_{3}\pm\delta)\}.$$


      \subsection{Coxeter graph} We can take $$\Pi=\{\frac{1}{2}(\delta+\varepsilon_1+ \varepsilon_2+\varepsilon_3), -\varepsilon_3, \varepsilon_3-\varepsilon_1,  \varepsilon_1-\varepsilon_2\}$$
 as  the standard fundamental system of $F(4)$. Then we can take
  $$\widetilde \Pi=\{-\delta\}\cup \{\frac{1}{2}(\delta+\varepsilon_1+ \varepsilon_2+\varepsilon_3), -\varepsilon_3, \varepsilon_3-\varepsilon_1,  \varepsilon_1-\varepsilon_2\}$$}
as the extended standard fundamental system.


\begin{proposal}\label{thm: excep f}
	The Coxeter graph of $\wsc$ for $F(4)$ is	

\begin{center}
	\begin{tikzpicture}
		
		\node[circle,draw,minimum size=8pt,inner sep=0pt,fill=white] (r1) at (-5,1) {};

		\node[circle,draw,minimum size=8pt,inner sep=0pt,fill=white] (r2) at (-3,1) {};
		
		\node[circle,draw,minimum size=8pt,inner sep=0pt,fill=white] (r3) at (-1,1) {};
		
		\node[circle,draw,minimum size=8pt,inner sep=0pt,fill=white] (r4) at (1,1) {};
			\draw (r4.135) -- (r4.315);
		\draw(r4.225) -- (r4.45);
		
		\node[circle,draw,minimum size=8pt,inner sep=0pt,fill=white] (r5) at (3,1) {};
		
		\draw (r1) -- (r2);
		\draw (r2) -- node[above=2pt] {\footnotesize 4} (r3);
		\draw (r3) -- node[above=2pt] {\footnotesize 12} (r4);
		\draw (r4) -- node[above=2pt] {\footnotesize 12} (r5);
		
		\node[below=3pt] at (r1) {$\widehat r_{\varepsilon_1-\varepsilon_2} $};
		\node[below=3pt] at (r2) {$ \widehat r_{\varepsilon_3-\varepsilon_1}$};
		\node[below=3pt] at (r3) {$\widehat r_{\varepsilon_3}$};
		\node[below=3pt] at (r4) {$\widehat r_{\frac{1}{2}(\delta+\varepsilon_{1}+\varepsilon_2+\varepsilon_3)}$};		
		\node[below=3pt] at (r5) {$\widehat r_{\delta}$};
		
	\end{tikzpicture}
\end{center}

\end{proposal}

\begin{expl}
		
 {
Note that the Coxeter graph of  the subsystem $(\scrw, \spo)$ is already known (see for example, \cite[Theorem 1.3.3]{GP}). It suffices for us to compute $m_{xy}$ involving $\widehat r_{\frac{1}{2}(\delta+\varepsilon_{1}+\varepsilon_2+\varepsilon_3)}$ in \S\ref{sec: cox sys} associated with Proposition \ref{prop: gen system}.} 	
	Here we give the calculation of $\widehat r_{\frac{1}{2}(\delta+\varepsilon_{1}+\varepsilon_2+\varepsilon_3)}\cdot \widehat r_{\varepsilon_3}$.  { The others cases are  similar.}

We need to calculate the actions of $\widehat r_{\frac{1}{2}(\delta+\varepsilon_{1}+\varepsilon_2+\varepsilon_3)}\cdot \widehat r_{\varepsilon_3}$ on all the  fundamental systems, and get the corresponding orders of them.  The least common multiple of these  orders is $12$. So the order of $\widehat r_{\frac{1}{2}(\delta+\varepsilon_{1}+\varepsilon_2+\varepsilon_3)}\cdot \widehat r_{\varepsilon_3}$ is $12$. So far, we can only give the calculations of the following two fundamental systems.

For $\prod:=\{\alpha_1=\frac{1}{2}(\delta+\varepsilon_{1}+\varepsilon_2+\varepsilon_3),\alpha_2=-\varepsilon_{1},
\alpha_3=\varepsilon_{1}-\varepsilon_{2},\alpha_4=\varepsilon_{2}-\varepsilon_{3}\}$, we have
{\tiny{
\begin{align*}
& \prod\overset
	{\widehat r_{\varepsilon_{3}}}{\longrightarrow}
\{\gamma:=\frac{1}{2}(\delta+\varepsilon_{1}+\varepsilon_2-\varepsilon_3),\alpha_2,\alpha_3, \varepsilon_2+\varepsilon_3 \}
\overset
	{\widehat r_{\alpha_1}}{\longrightarrow}\cr
&\{\gamma, \alpha_2, \alpha_3, \varepsilon_{2}+\varepsilon_{3}\} \overset
	{\widehat r_{\varepsilon_{3}}}{\longrightarrow}\prod
\overset
	{\widehat r_{\alpha_1}}{\longrightarrow}\cr
 &\{ -\alpha_1,\frac{1}{2}(\delta-\varepsilon_{1}+\varepsilon_2+\varepsilon_3),
\alpha_3,\alpha_4\}\overset
	{\widehat r_{\varepsilon_{3}}}{\longrightarrow} \{-\gamma,\frac{1}{2}(\delta-\varepsilon_{1}+\varepsilon_2-\varepsilon_3),
\alpha_3,\varepsilon_2+\varepsilon_3\}\overset
	{\widehat r_{\alpha_1}}{\longrightarrow} \cr
&\{-\gamma,\frac{1}{2}(\delta-\varepsilon_{1}+\varepsilon_2-\varepsilon_3),
\alpha_3,\varepsilon_2+\varepsilon_3\}
\overset
	{\widehat r_{\varepsilon_{3}}}{\longrightarrow} \{  -\alpha,\frac{1}{2}(\delta-\varepsilon_{1}+\varepsilon_2+\varepsilon_3),
\alpha_3,\varepsilon_{2}-\varepsilon_{3}\}\overset
	{\widehat r_{-\alpha_1}=\widehat r_{\alpha_1}}{\longrightarrow} \cr
&\prod.
\end{align*}
}}
So the order of the action $\widehat r_{\frac{1}{2}(\delta+\varepsilon_{1}+\varepsilon_2+\varepsilon_3)}\cdot \widehat r_{\varepsilon_3}$ on this fundamental system is $4$.

For $\coprod:=\{ \beta_1=\frac{1}{2}(\delta+\varepsilon_{1}+\varepsilon_2+\varepsilon_3),\beta_2=\frac{1}{2}(\delta-\varepsilon_{1}-\varepsilon_2-\varepsilon_3),
\beta_3=\frac{1}{2}(-\delta-\varepsilon_{1}-\varepsilon_2+\varepsilon_3),\beta_4=\varepsilon_{2}-\varepsilon_{3}\}$,  we have
{
{\tiny{
\begin{align*}
&\coprod\overset
	{\widehat r_{2\varepsilon_{3}}}{\longrightarrow}\{ \frac{1}{2}(\delta+\varepsilon_{1}+\varepsilon_2-\varepsilon_3),
\frac{1}{2}(\delta-\varepsilon_{1}-\varepsilon_2+\varepsilon_3),
\frac{1}{2}(-\delta-\varepsilon_{1}-\varepsilon_2-\varepsilon_3),
\varepsilon_{2}+\varepsilon_{3}\}\overset
	{\widehat r_{\beta_1}}{\longrightarrow}\cr
&\{-\varepsilon_3,  \varepsilon_{1}+\varepsilon_2, \beta_1,
\frac{1}{2}(-\delta-\varepsilon_{1}+\varepsilon_2+\varepsilon_3)\} \overset{\widehat r_{2\varepsilon_{3}}}{\longrightarrow}\{\varepsilon_3, \varepsilon_{1}+\varepsilon_2, \frac{1}{2}(\delta+\varepsilon_{1}+\varepsilon_2-\varepsilon_3), \frac{1}{2}(-\delta-\varepsilon_{1}+\varepsilon_2-\varepsilon_3)\}
\overset
	{\widehat r_{\beta_1}}{\longrightarrow}\cr
 &\{ \varepsilon_3, \varepsilon_{1}+\varepsilon_2, \frac{1}{2}(\delta+\varepsilon_{1}+\varepsilon_2-\varepsilon_3), \frac{1}{2}(-\delta-\varepsilon_{1}+\varepsilon_2-\varepsilon_3)\}
 \overset
	{\widehat r_{2\varepsilon_{3}}}{\longrightarrow} \{-\varepsilon_3,  \varepsilon_{1}+\varepsilon_2, \frac{1}{2}(\delta+\varepsilon_{1}+\varepsilon_2+\varepsilon_3),
\frac{1}{2}(-\delta-\varepsilon_{1}+\varepsilon_2+\varepsilon_3)\}
\overset
	{\widehat r_{\beta_1}}{\longrightarrow}\cr
 &\{ \frac{1}{2}(\delta+\varepsilon_{1}+\varepsilon_2-\varepsilon_3),
 \frac{1}{2}(\delta-\varepsilon_{1}-\varepsilon_2+\varepsilon_3),
\frac{1}{2}(-\delta-\varepsilon_{1}-\varepsilon_2-\varepsilon_3),
\varepsilon_{2}+\varepsilon_{3}\}\overset
	{\widehat r_{2\varepsilon_{3}}}{\longrightarrow}\coprod
\overset
	{\widehat r_{\beta_1}}{\longrightarrow}\cr
& \{ \frac{1}{2}(-\delta-\varepsilon_{1}-\varepsilon_2-\varepsilon_3),\delta, \varepsilon_{3},
\varepsilon_{2}-\varepsilon_{3}\}\overset
	{\widehat r_{2\varepsilon_{3}}}{\longrightarrow}\{ \frac{1}{2}(-\delta-\varepsilon_{1}-\varepsilon_2+\varepsilon_3),\delta, -\varepsilon_{3},
\varepsilon_{2}+\varepsilon_{3}\}\overset
	{\widehat r_{\beta_1}}{\longrightarrow}\cr
&\{ \frac{1}{2}(-\delta-\varepsilon_{1}-\varepsilon_2+\varepsilon_3),\delta, -\varepsilon_{3},
\varepsilon_{2}+\varepsilon_{3}\}\overset
	{\widehat r_{2\varepsilon_{3}}}{\longrightarrow}\{ \frac{1}{2}(-\delta-\varepsilon_{1}-\varepsilon_2-\varepsilon_3),\delta, \varepsilon_{3},
\varepsilon_{2}-\varepsilon_{3}\}\overset
	{\widehat r_{\beta_1}}{\longrightarrow}\cr
&\coprod.
\end{align*}
}}
So  the order of the action of $\widehat r_{\beta_1}\widehat r_{\varepsilon_3}$ on this fundamental system is $6$.
Note that $$\beta_1=\alpha_1=\frac{1}{2}(\delta+\varepsilon_{1}+\varepsilon_2+\varepsilon_3).$$
We finally have  that  the order of $\widehat r_{\frac{1}{2}(\delta+\varepsilon_{1}+\varepsilon_2+\varepsilon_3)}\cdot \widehat r_{\varepsilon_3}$ is $12$.

According the analysis in the beginning, the proposal  does make sense.  }	
\end{expl}


\subsection{Root system of $G(3)$} {
Let $\ggg=\ggg_\bz\oplus \ggg_\bo$ be the exceptional simple Lie superalgebra $G(3)$. We have $\ggg_\bz\cong G_2\oplus\mathfrak{sl}(2)$, and $\ggg_\bo\cong \bbc^7\oplus \bbc^2$ as an adjoint $\ggg_\bz$-module. Here $\bbc^7$ denotes the $7$-dimensional irreducible $G_2$-module, and $\bbc^2$ the natural $\mathfrak{sl}(2)$-module (see \cite{K} or \cite{CSW} for more details on $G(3)$).

The root system of $\ggg$ can be described as follows. Fix a Cartan subalgebra  $\hhh\in\ggg_\bz$. Assume $\varepsilon_i\in\hhh^*$ ($i=1,2,3$)
satisfy the linear relation $\varepsilon+\varepsilon_2+\varepsilon_3=0$.
Choose the standard simple system  $\Pi=\{\delta-\varepsilon_1,   \varepsilon_2-\varepsilon_3, -\varepsilon_2, \}$. Then the standard positive roots are $\Phi^+=\Phi^+_0\cup\Phi^+_1$
with
\begin{align}
&\Phi_0^+=\{2\delta, \varepsilon_1, -\varepsilon_2,-\varepsilon_3, \varepsilon_1-\varepsilon_2,\varepsilon_1-\varepsilon_3, \varepsilon_2-\varepsilon_3\},\cr
&\Phi_1^+=\{\delta, \delta\pm \varepsilon_i\mid i=1,2,3\}.
\end{align}

 There is a bilinear form on $\hhh^*$ given by
 $$ (\delta,\delta)=2, (\varepsilon_{i},\varepsilon_{i})=-2, (\varepsilon_{i},\varepsilon_{j})=1, (\delta,\varepsilon_{i})=0, i,j =1,2,3, i\neq j.$$

 The extended standard fundamental system for $G(3)$ is $$\widetilde \Pi=\{-2\delta\}\cup \{\delta-\varepsilon_1,   \varepsilon_2-\varepsilon_3,  -\varepsilon_2\}.$$
 }

\subsection{Coxeter graph}

\begin{proposal}\label{thm: excep g}
	The Coxeter graph of $\wsc(G(3))$  is	
	
	\begin{center}
		\begin{tikzpicture}
			
			\node[circle,draw,minimum size=8pt,inner sep=0pt,fill=white] (r1) at (-4,1) {};

			\node[circle,draw,minimum size=8pt,inner sep=0pt,fill=white] (r2) at (-3,1) {};
			
			\node[circle,draw,minimum size=8pt,inner sep=0pt,fill=white] (r3) at (-2,1) {};
			\draw (r3.135) -- (r3.315);
			\draw(r3.225) -- (r3.45);
			
			\node[circle,draw,minimum size=8pt,inner sep=0pt,fill=white] (r4) at (-1,1) {};

			\draw (r1) -- node[above=2pt] {\footnotesize 6} (r2);
			\draw (r2) -- node[above=2pt] {\footnotesize 12}(r3);
			\draw (r3) --  node[above=2pt] {\footnotesize 4}(r4);

			\node[below=3pt] at (r1) {$ \widehat r_{\varepsilon_2-\varepsilon_3}$};
			\node[below=3pt] at (r2) {$\widehat r_{\varepsilon_1+\varepsilon_3}$};
			\node[below=3pt] at (r3) {$\widehat r_{\delta-\varepsilon_{1}}$};
			\node[below=3pt] at (r4) {$\widehat r_{2\delta}$};

		\end{tikzpicture}
	\end{center}

\end{proposal}

\begin{expl}
	
{
Note that the Coxeter graph of  the subsystem $(\scrw, \spo)$ is already known. It suffices for us to compute $m_{xy}$ involving $\widehat r_{\delta - \varepsilon_1}$ in \S\ref{sec: cox sys} associated with Proposition \ref{prop: gen system}.} So the orders of $ \widehat r_{\varepsilon_2-\varepsilon_3}\cdot\widehat r_{\delta - \varepsilon_1}$, $\widehat r_{\varepsilon_1+\varepsilon_3}\cdot\widehat r_{\delta - \varepsilon_1}$ and $ \widehat r_{2\delta}\cdot\widehat r_{\delta - \varepsilon_1}$ should be given. Here we give the calculation of  $\widehat r_{\varepsilon_1+\varepsilon_3}\cdot\widehat r_{\delta - \varepsilon_1}$. { The other cases are  similar.}
	
	For	$\widehat r_{\varepsilon_1+\varepsilon_3}\cdot\widehat r_{\delta - \varepsilon_1}$, we need to calculate the actions of $\widehat r_{\varepsilon_1+\varepsilon_3}\cdot\widehat r_{\delta - \varepsilon_1}$ on all the  fundamental systems, and get the corresponding orders of them.  The least common multiple of these  orders is $12$. So the order of $\widehat r_{\varepsilon_1+\varepsilon_3}\cdot\widehat r_{\delta - \varepsilon_1}$ is $12$. Unfortunately, we can only give the calculations of the following  fundamental system.

For $\prod:=\{\delta-\varepsilon_{1}, -\varepsilon_{2},\varepsilon_{2}-\varepsilon_{3} \}$, we have
{
\begin{align*}
	&\prod
	\overset{\widehat r_{\delta-\varepsilon_{1}}}{\longrightarrow}
	\{-\delta+\varepsilon_{1}, \delta+\varepsilon_{3},\varepsilon_{2}-\varepsilon_{3} \}
	\overset{\widehat r_{\varepsilon_{1}+\varepsilon_{3}}}{\longrightarrow}\cr
&	\{-\delta-\varepsilon_{3}, \delta-\varepsilon_{1},\varepsilon_{1}-\varepsilon_{2} \}
	\overset{\widehat r_{\delta-\varepsilon_{1}}}{\longrightarrow}
	\{\varepsilon_{2}, -\delta+\varepsilon_{1},\delta-\varepsilon_{2} \}\overset{\widehat r_{\varepsilon_{1}+\varepsilon_{3}}}{\longrightarrow}\cr
&\{\varepsilon_{1}+\varepsilon_{3}, -\delta-\varepsilon_{3},\delta+\varepsilon_{2}\}
	\overset{\widehat r_{\delta-\varepsilon_{1}}}{\longrightarrow}
	\{\varepsilon_{1}+\varepsilon_{3}, -\delta-\varepsilon_{3},\delta+\varepsilon_{2}\}
	\overset{\widehat r_{\varepsilon_{1}+\varepsilon_{3}}}{\longrightarrow}\cr
&\{\varepsilon_{2}, -\delta+\varepsilon_{1},\delta-\varepsilon_{2} \}
	\overset{\widehat r_{\delta-\varepsilon_{1}}}{\longrightarrow}
	\{-\delta-\varepsilon_{3}, \delta-\varepsilon_{1},\varepsilon_{1}-\varepsilon_{2} \}
	\overset{\widehat r_{\varepsilon_{1}+\varepsilon_{3}}}{\longrightarrow}\cr
&	\{-\delta+\varepsilon_{1}, \delta+\varepsilon_{3},\varepsilon_{2}-\varepsilon_{3} \}\overset{\widehat r_{\delta-\varepsilon_{1}}}{\longrightarrow}\{\delta-\varepsilon_{1}, -\varepsilon_{2},\varepsilon_{2}-\varepsilon_{3} \}\overset{\widehat r_{\varepsilon_{1}+\varepsilon_{3}}}{\longrightarrow}\cr
&	\{\delta+\varepsilon_{3}, \varepsilon_{2},\varepsilon_{1}-\varepsilon_{2} \}\overset{\widehat r_{\delta-\varepsilon_{1}}}{\longrightarrow}\{\delta+\varepsilon_{3}, \varepsilon_{2},\varepsilon_{1}-\varepsilon_{2} \}\overset{\widehat r_{\varepsilon_{1}+\varepsilon_{3}}}{\longrightarrow}\cr
&\prod.
\end{align*}
}
 So the order of the action of  $\widehat r_{\varepsilon_1+\varepsilon_3}\cdot\widehat r_{\delta - \varepsilon_1}$ on $\prod$  is $6$.

For $\coprod:=\{\delta+\varepsilon_{3}, \varepsilon_{1},\varepsilon_{2}-\varepsilon_{1} \}$, we have
{
\begin{align*}
&\coprod	\overset{\widehat r_{\delta-\varepsilon_{1}}}{\longrightarrow}
	\{\delta+\varepsilon_{3}, \varepsilon_{1},\varepsilon_{2}-\varepsilon_{1} \}
	\overset{\widehat r_{\varepsilon_{1}+\varepsilon_{3}}}{\longrightarrow}\cr
&\{\delta-\varepsilon_{1}, -\varepsilon_{3},\varepsilon_{3}-\varepsilon_{2} \}
	\overset{\widehat r_{\delta-\varepsilon_{1}}}{\longrightarrow}
	\{\varepsilon_{1}-\delta, \delta+\varepsilon_{2},\varepsilon_{3}-\varepsilon_{2} \}\overset{\widehat r_{\varepsilon_{1}+\varepsilon_{3}}}{\longrightarrow}\cr
&\{-\delta+\varepsilon_{3}, \varepsilon_{1}+\varepsilon_{3},\varepsilon_{2}-\varepsilon_{1}\}
	\overset{\widehat r_{\delta-\varepsilon_{1}}}{\longrightarrow}
	\{-\delta+\varepsilon_{3}, \varepsilon_{1}+\varepsilon_{3},\varepsilon_{2}-\varepsilon_{1}\}
	\overset{\widehat r_{\varepsilon_{1}+\varepsilon_{3}}}{\longrightarrow}\cr
&\{\varepsilon_{1}-\delta, \delta+\varepsilon_{2},\varepsilon_{3}-\varepsilon_{2} \}
	\overset{\widehat r_{\delta-\varepsilon_{1}}}{\longrightarrow}
	\{\delta-\varepsilon_{1}, -\varepsilon_{3},\varepsilon_{3}-\varepsilon_{2} \}
	\overset{\widehat r_{\varepsilon_{1}+\varepsilon_{3}}}{\longrightarrow}\cr
&\coprod.
\end{align*}
}
So the order of the action of  $\widehat r_{\varepsilon_1+\varepsilon_3}\cdot\widehat r_{\delta - \varepsilon_1}$
 on $\coprod$ is $4$.

 Thus, the order of $\widehat r_{\varepsilon_1+\varepsilon_3}\cdot\widehat r_{\delta - \varepsilon_1}$ is the least common multiple of $4$ and $6$, which is  equal to $12$.  So the proposal does make sense.
\end{expl}

\section*{Statement}
The authors have no conflict of interest to declare that are relevant to this article.

\bibliographystyle{amsplain}

\begin{thebibliography}{20}
	\bibitem{CW} S.-J. Cheng  and W. Wang,  {\em Dualities and representations of Lie superalgebras}, Graduate Studies in Mathematics, 144. American Mathematical Society, Providence, RI, 2012.
	
	\bibitem{CSW} S.-J. Cheng, B. Shu  and W. Wang,  {\em Modular representations of exceptional supergroups}, Math. Z. 291 (2019), 635-659.

\bibitem{CCF}  C. Carmeli, L. Caston and R. Fioresi, {\em Mathematical foundations of supersymmetry}, European Mathematical
Society, Volume 15, 2011.





\bibitem{FG} R. Fioresi and F. Gavarini, {\em Chevalley supergroups}, Mem. Amer. Math. Soc. 215 (2012), no. 1014.


	

\bibitem{GP}  M. Geck and G. Pfeiffer, {\em Characters of finite Coxeter groups and Iwahori-Hecke algebras,} LMS Monogr. (N.S.) 21, The
Clarendon Press, Oxford University Press, New York, 2000.


\bibitem{H} I. Heckenberger, The Weyl groupoid of a Nichols algebra of diagonal type, Invent. Math. 164 (2006), 175-188.
\bibitem{HY} I. Heckenberger and H. Yamane, A generalization of Coxeter groups, root systems, and Matsumoto ’s theorem, Math. Z.
259 (2008),  255-276.

\bibitem{Hum} J. E. Humphreys, {\em Introduction to Lie algebras and representation theory}, Grad. Texts in Math. 9, Springer-Verlag, New York-Berlin, 1978.
	
	\bibitem{Hum2} J. E. Humphreys J E. {\em Reflection groups and Coxeter groups}, Cambridge Stud. Adv. Math., 29
Cambridge University Press, Cambridge, 1990.
	
\bibitem{Jan}  J. C. Jantzen,  {\em Reresentations of Algebraic Groups}, 2nd edn, American Mathematical Society, Providence, RI, 2003.


	\bibitem{K} V. G. Kac, {\em Lie superalgebras},  Advances in Math. 26 (1977),  8-96.
	
	
   \bibitem{K2} V. G. Kac, {\em Representations of classical Lie superalgebras}, Lecture notes in Mathematics, vol. 676 (1978),  597-626.


	\bibitem{KW}  V. G. Kac and M. Wakimoto,   {\em Integrable highest weight modules over affine superalgebras and Appell's function}. Comm. Math. Phys. 215 (2001), no. 3, 631-682.


\bibitem{LS} Y.-Y. Li and B.  Shu, {\em
Jantzen filtration of Weyl modules for general
linear supergroups}, Forum Math. 35 (2023), 1435-1468.

\bibitem{SNR} M. Scheunert, W. Nahm and V. Rittenberg, {\em Classification of all simple graded Lie algebras whose Lie
algebra is reductive I}, J. Math. Phys. 17 (1976) 1626-1639.	
     \bibitem{Mu1}  I. M. Musson,  {\em Lie superalgebras and enveloping algebras},  Graduate Studies in Mathematics, 131. American Mathematical Society, Providence, RI, 2012. xx+488 pp. ISBN: 978-0-8218-6867-6

	\bibitem{PS} L. Pan and B. Shu, {\em Jantzen filtration and strong linkage principle for modular Lie superalgebras}, Forum Math. 30 (2018), 1573-1598.

\bibitem{SW}  B. Shu and W. Wang, {\em Modular representations of the ortho-symplectic supergroups}, Proc. Lond. Math. Soc. 96 (2008), 251-271.


\bibitem{Ser} V. Serganova, {\em On generalizations of root systems},  Commun. Algebra, 24(1996), 4281-4299.




\bibitem{SV} A. N. Sergeev and A. P. Veselov, Grothendieck rings of basic classical Lie superalgebras, Ann. of Math. (2) 173 (2011), 663-703.


\bibitem{Var} V. S. Varadarajan, {\em Lie groups, Lie algebras, and their representations}, GTM 102, Springer-Verlag, New York, 1984


	
\end{thebibliography}

\end{document}